\title{On the structure of isentropes of real polynomials}
\author{O. Kozlovski}
\date{}
\newtheorem{mytheorem}{Theorem}
\newtheorem*{corollary}{Corollary}
\newtheorem{lemma}{Lemma}[section]
\newtheorem*{fact}{Fact}
\newtheorem{question}{Question}
\newtheorem*{conjecture}{Conjecture}
\newtheorem{definition}{Definition}
\newenvironment{theorem-proof}[1]
{\underline{Proof of Theorem~\ref{#1}} \newline}
{\hfill {$\Box$}}
\newfont{\Bb}{msbm10}
\newcommand{\N}{\mbox{\Bb N}}
\newcommand{\R}{\mbox{\Bb R}}
\newcommand{\Image}{\mathop{\mathrm{Image}}}
\newcommand{\Dom}{\mathop{\mathrm{Dom}}}
\newcommand{\RDom}{\mathop{\mathrm{RDom}}}
\newcommand{\pmin}{\mathop{\mathrm{P}_\mathrm{min}}}
\newcommand{\mod}{\mathop{\mathrm{mod}}}
\newcommand{\htop}{\mathop{\mathrm{h}_{\mathrm{top}}}}
\newcommand{\sign}{s}
\newcommand{\bnd}{\mathsf b}
\newcommand{\ppts}{\mathsf P}
\newcommand{\tLambda}{\tilde \Lambda}
\newcommand{\tGamma}{\tilde \Gamma}
\newcommand{\Ec}{{\cal E}}
\newcommand{\Ic}{{\cal X}}
\newcommand{\Pl}{{\cal Q}}
\newcommand{\It}{{\cal I}}
\newcommand{\Lc}{{\cal L}}
\newcommand{\ph}{{\cal PH}}
\newcommand{\sh}{{\cal SH}}
\newcommand{\shp}{{\cal SH}^{\cal P}}
\newcommand{\bLc}{\Lc}
\newcommand{\tLc}{{\tilde \Lc}}
\newcommand{\teLc}{{\tilde \Lc}^\epsilon}
\newcommand{\btLc}{\tilde \Lc}
\newcommand{\eLc}{\Lc^\epsilon}
\newcommand{\Mc}{{\cal M}}
\newcommand{\bMc}{\Mc}
\newcommand{\Pc}{{\cal P}}
\newcommand{\bPc}{\Pc}
\newcommand{\oPc}{\Pc^0}
\newcommand{\ePc}{\Pc^\epsilon}
\newcommand{\pPc}{\Pc^+}
\newcommand{\tPc}{{\tilde \Pc}}
\newcommand{\btPc}{\tPc}
\begin{document}

\maketitle

\begin{abstract}
  In this paper we will modify the Milnor--Thurston map, which maps a
  one dimensional mapping to a piece-wise linear of the same entropy,
  and study its properties. This will allow us to give a simple proof
  of monotonicity of topological entropy for real polynomials and
  better understand when a one dimensional map can and cannot be
  approximated by hyperbolic maps of the same entropy. In particular,
  we will find maps of particular combinatorics which cannot be
  approximated by hyperbolic maps of the same entropy.
\end{abstract}

\section{Introduction}
\label{sec:introduction}

In this paper we will study sets of one dimensional real polynomial
maps which have the same topological entropy which we will be calling
{\it isentropes}.

First the structure of isentropes was understood for the family of
quadratic maps: every isentrope in this case is connected and, since
the parameter space of the normalised quadratic maps is one
dimensional, is either a point or an interval, see \cite{M-Th-88},
\cite{D-H-84}, \cite{D-95}, \cite{Tsujii00}. When the dimension of the
parameter space increases, the structure of isentropes becomes much
more complicated. Even establishing whether isentropes are connected
for families of real polynomials with all critical points real took
quite an effort: in 1992 Milnor conjectured that isentropes are
connected in this case and proved it with Tresser for polynomials of
degree three (\cite{Mil-Tresser-00}); the general case was proved
later by Bruin and van Strien in \cite{BvS}.  However, it is still
unknown if isentropes are connected for real polynomial maps when one
allows some critical points to be complex (though we have made some
progress in this direction and we can prove the connectedness of
isentropes for some families (e.g. $x \mapsto x^4+ax^2+b$) where
complex critical points are allowed, see
Section~\ref{sec:polynomial-model}).

The main goal of this paper is to develop a set of tools which gives a
better understanding of the structure of isentropes and is used to
prove monotonicity. The strategy is based on some modifications of the
Milnor-Thurston map which maps every one dimensional smooth map to a
piece-wise linear map with constant slopes of the same entropy. We
will demonstrate how it works on two problems: we will generalise and
give a much simpler proof of monotonicity of topological entropy
(i.e. we will reprove the main results of \cite{BvS} in a more general
setting), and then we will make some progress in answering one of
Thurston's questions, see below.

The proof in \cite{BvS} is rather complicated and long. Let us review
some general ideas used to prove monotonicity of entropy.

We start with defining what we mean by a monotone map.
\begin{definition}
  Let $X$ and $Y$ be some topological spaces and $F:X \to Y$ be a
  map. We say that the map $F$ is \emph{monotone} if for any
  $y\in Y$ the set $F^{-1}(y)$ is connected.
\end{definition}
The following simple fact will be proved in the Appendix:
\begin{fact}
  Let $X$ and $Y$ be compact connected topological spaces, and the map
  $F:X\to Y$ be continuous, surjective and monotone. Let $Z\subset Y$
  be a connected subset of $Y$. Then the preimage of $Z$ under $F$ is
  connected.
\end{fact}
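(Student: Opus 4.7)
The plan is to argue by contradiction: if $F^{-1}(Z)$ were disconnected, push a separation down via $F$ to produce a separation of the connected set $Z$. So suppose $F^{-1}(Z) = A \sqcup B$ with $A,B$ nonempty, disjoint, and both relatively closed (equivalently, relatively open) in $F^{-1}(Z)$. Since $F$ is monotone, for each $y \in Z$ the fiber $F^{-1}(y)$ is connected, hence entirely contained in $A$ or entirely in $B$. In particular $Z = F(A) \cup F(B)$, a disjoint union into nonempty pieces. This alone is not yet a separation of $Z$ because $F(A)$ and $F(B)$ are not a priori closed in $Z$.

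To remedy this I would pass to the closures $\bar A$ and $\bar B$ taken inside $X$. These are compact, so $F(\bar A)$ and $F(\bar B)$ are compact; assuming $Y$ is Hausdorff (the natural setting, since the fact will be applied to subsets of finite-dimensional polynomial parameter spaces), they are then closed in $Y$, so $F(\bar A) \cap Z$ and $F(\bar B) \cap Z$ are closed in $Z$. They certainly cover $Z$, since $Z = F(A) \cup F(B) \subseteq F(\bar A) \cup F(\bar B)$.

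The heart of the argument is to show these two closed sets are disjoint in $Z$. Because $A$ is relatively closed in $F^{-1}(Z)$, one verifies the identity $\bar A \cap F^{-1}(Z) = A$ (and similarly for $B$): writing $A = C \cap F^{-1}(Z)$ with $C$ closed in $X$ gives $\bar A \subseteq C$, and the claim follows. Now, if some $y \in Z$ belonged to both $F(\bar A)$ and $F(\bar B)$, pick $x_A \in \bar A \cap F^{-1}(y)$ and $x_B \in \bar B \cap F^{-1}(y)$; by the identity just noted, $x_A \in A$ and $x_B \in B$. But $x_A,x_B$ lie in the connected fiber $F^{-1}(y)$, which must be contained entirely in one of $A,B$, contradicting $A \cap B = \emptyset$. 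This yields a separation of $Z$ and the required contradiction.

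The main subtlety is reconciling the two notions of closedness in play: the hypothetical separation of $F^{-1}(Z)$ only provides pieces closed in the subspace topology, whereas to exploit compactness of $X$ and closedness of $F$ one needs closures taken in the ambient space. The one-line verification $\bar A \cap F^{-1}(Z) = A$ is the technical hinge; everything else is a standard combination of compactness, continuity, and the monotonicity hypothesis on fibers.
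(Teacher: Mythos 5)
Your proof is correct, and it differs from the paper's in its mechanics. The paper proves a slightly more flexible statement (Lemma~\ref{lm:top} in the Appendix): it only assumes $X$ compact, $F$ continuous, $F(A)$ connected and $F|_A$ monotone for $A=F^{-1}(Z)$, with no surjectivity or connectedness of $Y$; it then argues by contradiction like you do, but instead of descending to an actual separation of $Z$ it observes that the images $F(A_1)$, $F(A_2)$ of the two pieces cannot separate the connected set $F(A)$, picks $y_0\in F(A_1)\cap \overline{F(A_2)}$, and uses a sequence $y_i\in F(A_2)$ with preimages $x_i\in A_2$ plus compactness to extract a limit point $x_\infty\in \bar A_2$ with $F(x_\infty)=y_0$; monotonicity forces the whole fiber $F^{-1}(y_0)$ into $A_1$, contradicting $A_1\cap\bar A_2=\emptyset$. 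You instead complete the descent: using compactness of $\bar A$, $\bar B$ and closedness of compact sets in a Hausdorff $Y$, you show $F(\bar A)\cap Z$ and $F(\bar B)\cap Z$ are closed, cover $Z$, and are disjoint (via the identity $\bar A\cap F^{-1}(Z)=A$ and monotone fibers), so $Z$ itself is separated. Your closed-image argument avoids sequences entirely, so it needs no metrizability or sequential compactness, at the price of the explicit Hausdorff hypothesis you flag; the paper's sequential argument implicitly relies on the metric-space setting of the intended applications, so neither proof is strictly more general than the statement as literally written, and both are fine for the spaces actually used (subsets of $\R^D$ and of $C^b$ map spaces). The key step, namely that a connected fiber over a boundary point must lie wholly in one piece of the separation, is the same in both arguments.
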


This statement enables us to use the following strategy for proving
 monotonicity of entropy. Let $X$ be a connected
component of the space of polynomials of given degree with real
critical points (actually, this approach would work for any space of
maps). Now suppose we can find another space of maps $Y$ which is
somewhat ``simpler'' than $X$ and has the following properties:
\begin{itemize}
\item There is a map $F:X \to Y$ which is continuous, surjective and monotone.
\item The map $F$ preserves the topological entropy.
\item The map $\htop : Y \to \R$ is monotone.
\end{itemize}
Then, due to Fact above the map $\htop: X \to \R$ which can be
seen as the composition $\htop|_Y \circ F$ is monotone.

In \cite{BvS} the authors use the space of stunted sawtooth maps as
the probe space $Y$. Stunted sawtooth maps were introduced in
\cite{Mil-Tresser-00}. They are piece-wise linear maps whose branches have slopes
$\pm \mbox{constant}$ or 0. It is rather easy to show monotonicity of
$\htop|_Y$. The map $F$ is defined using the kneading invariants of
the maps and, thus, the maps $f\in X$ and $F(f)\in Y$ have the same
combinatorial structure. This immediately implies that $F$ preserves
the topological entropy. To prove monotonicity of $F$ one should
use the rigidity result for real polynomials, see \cite{KSS1},
\cite{KSS2}, \cite{Clark2017}.

So far the strategy worked out perfectly, but now some problems
arise. It turns out that the map $F$ is neither continuous nor
surjective. The authors of \cite{BvS} had to overcome the lack of
these two properties which was not straightforward.

Now let us try a different probe space $Y$, for example the usual
space of piece-wise linear maps with constant slopes.  The map $F$ in
this case is given by the Milnor-Thurston map
\cite{M-Th-88}. However, again the map $F$ is not continuous and
not surjective.

In this paper we use a slight modification of the space of piece-wise
linear maps of constant slopes. This modification makes the
Milnor-Thurston map continuous and surjective and all other required
properties we get almost for free.

Another new ingredient we introduce is the notion of multi-interval
maps. At first sight one might think that these maps should not be of
great use: after all, the dynamics of a multi-interval map can be
described in terms of a usual one dimensional interval map. However,
such multi-interval maps provide a useful decomposition of iterates of
a map and will enable us to formulate certain results in the more
general (and useful) settings.

\vspace{2mm}

As we have already mentioned one of the aims of this paper is to give
a short proof of monotonicity of topological entropy. There is
another profound reason for finding different approaches to this
problem. The stunted sawtooth maps used in \cite{Mil-Tresser-00} and
\cite{BvS} have rather complicated dynamics and though it is easy to
prove that in the space of stunted sawtooth maps sets of constant
topological entropy are connected, the structure of the isentropes is
completely unclear and it is impossible to see what
stunted sawtooth maps belong to a given isentrope.

On the other hand, in the space of piece-wise linear maps of constant
slopes the isentropes can be easily understood: such an isentrope
consists of maps whose slopes are equal to $\pm\exp(h)$ where $h$ is
the topological entropy of the given isentrope.

The following question was asked by W.~Thurston:
\begin{question}
  Consider the space of real polynomials of degree $d>2$ with all
  critical points real. Does there exist a dense set
  $H\subset [0,\log(d)]$ of entropy levels such that the hyperbolic
  polynomials are dense in the isentrope of entropy $h$ for every
  $h\in H$?
\end{question}
As usual we call a polynomial hyperbolic if the iterates of all
critical points converge to attracting periodic points and there are
no neutral periodic points. It is clear that there are only countably
many combinatorially different hyperbolic maps, so there exists at
most countably many entropy levels whose isentropes contain hyperbolic
maps. In fact, a simple argument (presented in Section~\ref{sec:tq})
will show that the entropy of a hyperbolic map is always the logarithm
of an algebraic number. In view of this discussion one might ask
questions related to Thurston's one:
\begin{question} \label{q:2}
  Consider the space of real polynomials of degree $d>2$ with all
  critical points real. Do there exist isentropes of positive entropy
  which contain hyperbolic maps of infinitely many different
  combinatorial types? Is there a dense set of entropy levels with
  such the property?
\end{question}
Of course, an affirmative answer on Thurston's question implies the
affirmative answer of the above questions, however we conjecture that
the answer on Thurston's question is negative. More precisely we
conjecture the following:
\begin{conjecture}
  In the space of polynomials of degree $d > 2$ with all critical
  points real there are no isentropes of entropy $h \in (0, \log d)$ where
  hyperbolic polynomials are dense.
\end{conjecture}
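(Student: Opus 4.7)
The plan is to show that in every isentrope $I_h \subset X$ at entropy level $h \in (0, \log d)$ one can exhibit a polynomial $f_h$ lying outside the closure of the hyperbolic polynomials of entropy $h$, so the hyperbolic set cannot be dense. The natural framework is the modified Milnor--Thurston map $F: X \to Y$ developed earlier in the paper: it is continuous, surjective, monotone and entropy-preserving, so approximation questions in $I_h$ can be transported to $F(I_h)$, which in the piece-wise linear model consists of maps with slopes $\pm e^h$. This transfer is what lets us control which combinatorics can possibly arise as limits of hyperbolic polynomials at the fixed entropy~$h$.

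First I would analyse the structure of the hyperbolic locus inside $X$. If $g$ is hyperbolic of combinatorial type $\tau$, then a neighbourhood $U_\tau \ni g$ consists of polynomials of the same type and hence of the same entropy $h_\tau$, so $I_{h_\tau} \supseteq U_\tau$. The hyperbolic part of $I_h$ is therefore a countable disjoint union of relatively open pieces indexed by the hyperbolic types realising entropy $h$. A polynomial $f \in I_h$ in the closure of hyperbolic maps of entropy $h$ must then either sit on the topological boundary of one such $U_\tau$, or else be approached by pieces coming from infinitely many different types $\tau_n$ with $h_{\tau_n} = h$. Each alternative imposes strong rigidity at~$f$: in the first case $f$ carries a critical relation (a critically periodic configuration on the boundary of the hyperbolic component), while in the second the kneading invariants of the $\tau_n$ must converge to those of $f$ while all having the same entropy, forcing $f$ to sit at the limit of an increasingly complex combinatorial tower.

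The next step is to construct, for every $h \in (0, \log d)$, a polynomial $f_h \in I_h$ violating both alternatives. Using the surjectivity of $F$, one first produces a piece-wise linear target $g_h \in Y$ of constant slope $\pm e^h$ whose kneading data avoid every finite critical relation and cannot arise as a limit of kneading sequences of hyperbolic type. Natural candidates are infinitely renormalisable maps with a prescribed (say bounded-type) combinatorics at every renormalisation level, chosen so that $F^{-1}(g_h) \cap X$ is non-empty thanks to the rigidity results of \cite{KSS1,KSS2,Clark2017}. Picking any $f_h$ in the connected fibre $F^{-1}(g_h)$ provided by monotonicity of $F$, one obtains a polynomial whose combinatorics is incompatible with either rigidity constraint, so $f_h$ lies outside the closure of the hyperbolic polynomials of entropy~$h$.

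The hard part will be to perform this construction uniformly in $h$: one must exhibit, for \emph{every} entropy level $h \in (0, \log d)$, a piece-wise linear model whose combinatorial type is simultaneously realised by a polynomial in $X$ and demonstrably unapproachable by hyperbolic kneading sequences of the same entropy. This requires a fine combinatorial parametrisation of isentropes in $Y$ together with control on how sequences of hyperbolic kneading sequences can converge while keeping the entropy fixed. The techniques developed in this paper seem to yield such obstructions only at particular entropies (as announced in the abstract, where \emph{particular} combinatorics are exhibited), and closing the gap to the whole range $(0,\log d)$ is precisely where the conjecture remains open.
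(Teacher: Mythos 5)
You cannot be checked against ``the paper's proof'' here, because the paper does not prove this statement: it is explicitly left as a conjecture. What the paper provides are partial results pointing towards it --- Theorem~\ref{th:D} (an \emph{ordinary codimension one hyperbolic} map, i.e.\ one with $l-1$ critical points trapped in attracting basins and a nondegenerate system of bifurcation polynomials, cannot be approximated by hyperbolic maps of the same entropy), Theorem~\ref{th:no_exceptional} (no exceptional turning points above entropy $\log 3$), and the reduction in Section~\ref{sec:ordinary-isentropes} of the full conjecture to the existence of codimension one hyperbolic maps on every isentrope, which in turn is reduced to a number-theoretic statement about the bifurcation polynomials that the author states he was unable to prove. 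So your proposal has to stand on its own, and, as you yourself concede in the last paragraph, it does not: it is a programme with the decisive step missing, which leaves the conjecture exactly as open as the paper does.

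Beyond the acknowledged gap, two concrete steps in your plan are flawed. First, your candidate $g_h\in Y$ --- an infinitely renormalisable map of constant slope $\pm e^{h}$ with $h>0$ --- does not exist: a renormalisation interval of period $n$ carries the map $q^n$ with slopes $\pm e^{nh}$, so $n$ is bounded by $\log(|l|+1)/h$ and constant-slope maps of positive entropy are only finitely renormalisable. (Infinitely renormalisable polynomials exist, but precisely because $\Lambda$ collapses their renormalisation structure they cannot serve as distinguished fibres of the piece-wise linear model in the way you intend.) Second, and more fundamentally, ``kneading data avoiding every finite critical relation'' is not a known obstruction to approximation by hyperbolic maps of the same entropy --- establishing any such obstruction is the whole problem. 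The mechanism the paper actually isolates is the opposite of generic: one needs a map whose critical points are all controlled except one, and whose controlled itineraries give an \emph{ordinary} (nondegenerate) linear system in the parameters $b^i$, so that Lemma~\ref{lm:r} pins the map down uniquely inside its isentrope and forbids nearby hyperbolic maps of equal entropy. Your dichotomy about boundaries of hyperbolic components versus limits of infinitely many types does not by itself produce any verifiable rigidity constraint on a prescribed combinatorics, so the construction of $f_h$ ``uniformly in $h$'' is not merely hard, it is unsupported at the single entropy levels where the conjecture has content (the countably many $h$ with $e^h$ algebraic for which hyperbolic maps exist in the isentrope at all).
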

The results of this paper give some insight on how one can prove the
conjecture. In Section~\ref{sec:tq} we will explain how to
reduce this conjecture first to a question about piece-wise linear
maps and then to some number theory question. In fact we will find a
combinatorial obstruction which prevents a map from being approximable
by a hyperbolic map of the same topological entropy. Also, we will
demonstrate that in case of cubic polynomials the answer on the first
part of question~\ref{q:2} is positive.

The paper is structured as follows. After introducing some necessary
notation we state monotonicity of entropy theorems in
Section~\ref{sec:polynomial-model}. Then we introduce the space of
piece-wise linear maps with constant slopes, define the
Milnor-Thurston map and prove that after an appropriate modification
this map becomes continuous. This will take
Sections~\ref{sec:linear-model}--\ref{sec:continuity-lambda}. The
proof of the monotonicity theorems are in
Sections~\ref{sec:proof-main-result} and
\ref{sec:case-non-degenerate}. Then we will study when a map cannot be
approximated by hyperbolic maps of the same entropy and discover that
under certain (rather non-restrictive) condition a map which has all
critical points in basins of periodic attractors except one critical
point, cannot be approximated by hyperbolic maps of the same entropy
(Section~\ref{sec:tq}). Finally, we study more the mentioned
condition, give some examples when it is not satisfied, prove that it
is always satisfied if the entropy of the map is larger than $\log 3$
and argue that every isentrope should have such a ``codimension one
hyperbolic'' map (Sections~\ref{sec:except-isentr} and
\ref{sec:ordinary-isentropes}).

There are many more other open questions related to monotonicity
of entropy where the approach introduced here can be
useful. For example, it is unknown if the isentropes in the space of
real polynomials are contractible.  We suggest the reader to consult
\cite{vS-14} and the introduction of \cite{BvS} where the history and
importance of monotonicity of entropy together with remaining
open problems are discussed with very fine details.

\section{Multi-interval Multi-modal maps}
\label{sec:multi-interval-multi}

Surprisingly enough to prove monotonicity of entropy for polynomials
we will have to consider more general spaces of maps which we will
call multi-interval multi-modal, and which are introduced in this
section. Because of use of these multi-interval maps our main theorems
will apply to the wider class of spaces compared to \cite{BvS},
however our way of proof will require these maps even for the proof of
the monotonicity of the entropy just for the space of polynomial maps
considered in \cite{BvS}.

Let $I=\cup_{k=1}^N I_k$ be a union of disjoint intervals and $f: I
\to I$ be a differentiable map which maps the set of boundary
points of $I$ to itself. We will call such a map \emph{multi-interval
  multi-modal}. The domain of definition $I$ of $f$ will be denoted by
$\Dom(f)$.

Every interval $I_k$ is mapped by $f$ into another interval which we
denote $I_{\sigma(k)}$ where $\sigma : \{1,\ldots,N\} \to
\{1,\ldots,N\}$. So $f(I_k) \subset I_{\sigma(k)}$ and $f(\partial
I_k) \subset \partial I_{\sigma(k)}$.
Note that we do not assume that $\sigma$ is a permutation.

We also define two more function associated to $f$: $l(k)$ will denote
the number of critical points of the map $f|_{I_k}$ counting with
their multiplicities; $\sign(k)$ is
defined to be $+1$ if $f$ is mapping the left boundary point of $I_k$
onto the left boundary point of $I_\sigma(k)$ and $-1$ otherwise. The
total number of critical points will be denoted by $|l|:= \sum_{k=1}^N
l(k)$.

The space of $C^b$ multi-interval multi-modal maps has the topology
induced by the $C^b$ norm.  $\Mc_{N,\sigma,l,\sign}^b$ will denote all
multi-interval multi-modal $C^b$  maps with the prescribed combinatorial data
$N$, $\sigma$, $l$, and $\sign$. Notice that
$\Mc_{N,\sigma,l,\sign}^b$ is a connected set.

We will need to consider subsets of $\Mc_{N,\sigma,l,\sign}^b$ defined
as follows. The set of boundary points of intervals $I_k$ is mapped to
itself by $f$, and this map depends only on the combinatorial
information $N,\sigma,l,\sign$. Let $\ppts$ be the set of orbits of
periodic boundary points and $\bnd : \ppts \to \{0,1\}$ be a function
which assumes only two values $\{0,1\}$.  Then
$\Mc_{N,\sigma,l,\sign, \bnd}^b$ defined as a subset of
$\Mc_{N,\sigma,l,\sign}^b$ such that $|Df^n(x)|\ge \bnd(p)$ if $x$ is
a periodic boundary point of period $n$ and $p$ is the periodic orbit
corresponding to $x$. Here $Df$ denotes the derivative of $f$.  Clearly, if $\bnd$ is a zero function, then
$\Mc_{N,\sigma,l,\sign}^b = \Mc_{N,\sigma,l,\sign, \bnd}^b$.

To simplify notation we set $\Ic=\{N,\sigma,l,\sign,\bnd \}$ and will
write $\Mc_\Ic^b$ instead of $\Mc_{N,\sigma,l,\sign, \bnd}^b$ when it
does not create a confusion.  We will not distinguish maps in
$\bMc_\Ic^b$ which can be obtained from each other by a linear
rescaling of intervals $I_k$, so we can assume that all intervals
$I_k$ are of the unit length. Also, notice that for any combinatorial
information $\Ic$ the space $\Mc_\Ic$ is connected.

If $\sigma$ is a cyclic permutation, we will call the space
$\bMc_\Ic^b$ \emph{cyclic}. If there exists $k_0$ such that for any $k
\in \{1,\ldots,N\}$ there is $n\in \N$ such that $\sigma^n(k)=k_0$,
then the corresponding space $\bMc_\Ic^b$ is called
\emph{primitive}. For every primitive space $\bMc_\Ic^b$ there exists
a unique set of subintervals $I_{k_1},\ldots,I_{k_{N'}}$ such that the
restriction of the maps in $\bMc_\Ic^b$ to the union of these
subintervals forms a cyclic space $\bMc_{\Ic'}^b$ with an appropriate
$\Ic'$. The number $N'$ will be called the \emph{period} of the
primitive space $\bMc_\Ic^b$.

Every space $\bMc_\Ic^b$ can be decomposed into a Cartesian product of
primitive spaces:
$$
\bMc_\Ic^b=\bMc_{\Ic_1}^b \times \cdots \times \bMc_{\Ic_m}^b,
$$
where all $\bMc_{\Ic_i}^b$ are primitive. The minimum of all periods
of $\bMc_{\Ic_i}^b$ will be called the \emph{minimal period} of
$\bMc_{\Ic}^b$ and will be denoted by $\pmin$.

Given two data sets $\Ic$ and $\Ic'$ we will say that $\Ic'$ is
\emph{subordinate} to $\Ic$ if either $|l|>|l'|$ or $|l|=|l'|$ and
$\pmin(\Ic)<\pmin(\Ic')$.

Finally, if $l(k)>0$ for any $k \le N$ such that $k \not \in
\Image(\sigma)$, the corresponding space $\bMc_\Ic^b$ will be called
\emph{essential}. In other words, an essential space cannot have an
interval without critical points which does not contain an image of another interval. 

The multi-interval multi-modal maps are not much different from just the
multi-modal maps and the combinatorial theory of one dimensional maps
can be applied to them.

The \emph{basin of attraction} of a non-repelling periodic point $x$
of the map $f$ is defined as the interior of all points whose
trajectories converge to the orbit of $x$ under iterates of $f$ and
denoted by $B(f,x)$. The intervals of $B(f,x)$ which contain points of
orbit of $x$ is called the \emph{immediate basin of
  attraction}. Basins of attraction whose immediate basins of
attraction contain critical points of $f$ are called \emph{essential}.
Finally, the \emph{basin of attraction} of the map $f$ is the union of
basins of attraction of all non-repelling periodic points of $f$ and
denoted by $B(f)$.

Two maps $f_1,f_2 \in \bMc_\Ic^1$ are called \emph{semi-conjugate} if
there exists a continuous monotone map $H:I\to I$ such that
$H(I_k)=I_k$ for all $k$, the map $H$ maps the critical points of $f_1$
onto the critical points of $f_2$ of the same order and $H\circ f_1 = f_2 \circ H$.

Two maps $f_1,f_2 \in \bMc_\Ic^1$ are called \emph{partially
  conjugate} if there exists a homeomorphism $H:I\to I$ such that
$H(I_k)=I_k$ for all $k$, the map $H$ maps the critical points of
$f_1$ onto the critical points of $f_2$ of the same order, $H$ maps
the basins of attraction $B(f_1)$ onto the basins of
attraction $B(f_2)$, i.e. $H(B(f_1))=B(f_2)$, and
$H\circ f_1|_{I\setminus B(f_1)} = f_2 \circ H|_{I\setminus B(f_1)}$.

\section{Polynomial model}
\label{sec:polynomial-model}

In the space $\Mc_\Ic^\infty$ consider maps $p$ such that the
restriction of $p$ to any interval $I_k$ is a polynomial of degree
$l(k)+1$. Notice that this implies that all critical points of the
polynomial $p|_{I_k}$ belong to the interval $I_k$ and, therefore,
$p_{I_k}$ has non-positive Schwarzian derivative. We denote the space
of such maps by $\Pc_\Ic$.

To state the main result of this paper we will use the following
notation: if $X$ is a space of maps (e.g. $\bMc_\Ic^1$ or $\bPc_\Ic$),
then for any $h\ge 0$ we define
\begin{eqnarray*}
  X(=h)&:=&\{f\in X:\, \htop(f)=h\}\\
  X(\le h)&:=&\{f\in X:\, \htop(f) \le h\}.
\end{eqnarray*}

\begin{mytheorem}\label{th:A}
  The isentrope $\bPc_\Ic(=h)$ is connected for any $\Ic$ and $h\ge 0$, in
  other words the map $\htop|_{\bPc_\Ic}$ is monotone.
\end{mytheorem}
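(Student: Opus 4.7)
The proof will follow the general strategy laid out by the author in the introduction: identify a simpler probe space $Y$ together with a map $F:\bPc_\Ic \to Y$ that is continuous, surjective, monotone and entropy-preserving, and then verify that $\htop|_Y$ is monotone, so that by the Fact quoted in the introduction each isentrope $\bPc_\Ic(=h) = F^{-1}(\htop|_Y^{-1}(h))$ is connected.

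The plan is to take $Y$ to be a suitably modified space of piece-wise linear multi-interval multi-modal maps with constant slopes (of the same combinatorial type $\Ic$), and to take $F$ to be an appropriate modification of the Milnor--Thurston construction that assigns to a polynomial $p \in \bPc_\Ic$ the unique piece-wise linear map with constant slopes of the same kneading data (and therefore the same topological entropy). The reason the raw Milnor--Thurston map cannot be used directly is precisely that it is neither continuous nor surjective; the technical heart of the preceding Sections~\ref{sec:linear-model}--\ref{sec:continuity-lambda} is to enlarge the target space so that ``degenerate'' piece-wise linear maps (e.g. those where an interval of monotonicity has collapsed, or where a branch has been truncated to a plateau) are included, and to modify $F$ accordingly so that all of the required properties hold.

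With these objects in place the verification breaks into four steps. First, entropy preservation is immediate from the Milnor--Thurston philosophy: $p$ and $F(p)$ share kneading data, hence topological entropy. Second, monotonicity of $\htop|_Y$ follows because, as the author observes, an isentrope inside the piece-wise linear model consists exactly of maps whose slopes are $\pm e^{h}$, and within a fixed combinatorial cell $\Ic$ such maps form a connected (in fact easily parametrised) set. Third, surjectivity of $F$ must be arranged by the modification of the target space described in Sections~\ref{sec:linear-model}--\ref{sec:continuity-lambda}, so by that point it is essentially built in. Fourth, continuity of $F$ is the whole point of the modification and is established in Section~\ref{sec:continuity-lambda}; one then applies it here as a black box.

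The genuinely hard step is proving that $F$ is monotone, i.e.\ that the fibre $F^{-1}(q)$ over any piece-wise linear map $q \in Y$ is connected inside $\bPc_\Ic$. This is where the rigidity theory for real polynomials (\cite{KSS1}, \cite{KSS2}, \cite{Clark2017}) must be used: two polynomials in the same fibre have identical kneading invariants and hence, up to the permitted collapses on basins of attraction, must be partially conjugate in the sense defined in Section~\ref{sec:multi-interval-multi}. The expected argument is to parametrise such a fibre by the moduli of the essential basins of attraction (together with the small amount of data lost by the modified Milnor--Thurston map), to show that this parameter space is itself a cell, and to verify that the natural map from that cell onto the fibre is a homeomorphism. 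The multi-interval formalism introduced earlier is crucial here, because renormalising a polynomial to one of its essential restrictive intervals produces precisely a map in some $\bPc_{\Ic'}$ of subordinate combinatorial type, allowing an induction on the partial order $\Ic' \prec \Ic$ given by $|l|$ and $\pmin$. Once monotonicity of $F$ is established the statement of Theorem~\ref{th:A} follows by a direct application of the Fact.
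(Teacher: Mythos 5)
Your overall skeleton matches the paper's: the probe space is the quotient $\btLc_\Ic$ of the constant-slope model by the similarity relation, the map $F$ is the modified Milnor--Thurston map $\tLambda=\Psi\circ\Lambda$, and Theorem~\ref{th:A} follows from the topological lemma once $\tLambda$ is continuous, surjective and monotone and $\btLc_\Ic(=h)$ is connected. The gap is in the step you yourself single out as the hard one: your description of the fibres of $F$ is incorrect. If a representative $q$ has a periodic turning point, a polynomial $p$ in the fibre over $\Psi(q)$ need not have the same kneading data as $q$: the semi-conjugacy collapses the restrictive intervals lying over the (eventually) periodic turning points, and inside those intervals $p$ may carry arbitrary renormalized dynamics of entropy at most $\htop(q)$. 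Hence two polynomials in the same fibre are in general \emph{not} partially conjugate, the fibre is much larger than a partial-conjugacy class, and rigidity (Lemma~\ref{lm:connect1}) alone cannot give its connectedness; likewise a parametrization ``by moduli of the essential basins of attraction'' does not see the renormalized part at all. The paper instead decomposes the fibre as $\bigcup_{\Psi(q)=\tilde q}\tGamma_q^{-1}\bigl(\btPc_{\Ic_q}(\le h)\bigr)$: each piece is connected because $\tGamma_q$ is monotone (this is exactly where rigidity enters, through connectedness of partial-conjugacy classes) and because $\bPc_{\Ic_q}(\le h)$ is connected by the induction hypothesis applied to the subordinate data $\Ic_q$; the pieces are then glued along common polynomials semi-conjugate to two similar maps $q_1\approx q_2$. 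So the induction on subordinate combinatorics is not an auxiliary device for analysing one partial-conjugacy class --- the connectedness of the fibre \emph{reduces to} the theorem (in its $\le h$ form) for simpler combinatorics.

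A second, smaller issue: the order you propose to induct on ($|l|$ drops, or $|l|$ stays and $\pmin$ grows) is not well founded, since $\pmin$ can increase without bound. The paper makes the induction terminate by fixing a threshold $h_0>0$ and observing that for cyclic $\Ic$ with $|l|=L$ and $N>\log(2)\,L/h_0$ every map in $\bPc_\Ic$ has entropy below $h_0$, so the statement for $h\ge h_0$ is vacuous there; the induction then runs backwards in $N$ for each fixed $L$ and $h_0$. Note also that what the induction must supply is connectedness of the sub-level sets $\bPc_{\Ic'}(\le h)$, not only of the isentropes, and that the case $h=0$ is recovered at the end as a nested intersection of compact connected sets.
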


Notice that the space $\bMc^b_\Ic$ as well as $\bPc_\Ic$ contains maps
with degenerate critical points. Let us remove these maps and denote by
$\Mc^{0,b}_\Ic \subset \bMc^b_\Ic$ the set of maps which have only
quadratic critical points, and set $\oPc_\Ic:= \bPc_\Ic \cap
\Mc^{0,b}_\Ic$. The topological entropy function is also monotone on
this space:
\begin{mytheorem}\label{th:B}
  The isentrope $\oPc_\Ic(=h)$ is connected for any $\Ic$ and $h\ge 0$.
\end{mytheorem}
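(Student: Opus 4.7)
The plan is to apply the same probe-space strategy used to prove Theorem~\ref{th:A}, but restricted to the non-degenerate stratum $\oPc_\Ic\subset\bPc_\Ic$. Recall that Theorem~\ref{th:A} is obtained by factoring $\htop|_{\bPc_\Ic}$ through the modified Milnor--Thurston map $\Lambda$ (built in Sections~\ref{sec:linear-model}--\ref{sec:continuity-lambda}) into the probe space of piece-wise linear constant-slope maps; this probe space has obvious monotonicity of $\htop$ and each of its isentropes is connected (it consists of maps of a single slope $\pm e^h$), so the Fact in the Introduction gives connectedness of $\htop^{-1}(h)\subset\bPc_\Ic$ once $\Lambda$ is shown to be continuous, surjective, entropy-preserving and monotone. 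My plan for Theorem~\ref{th:B} is to verify the same four properties for $\Lambda|_{\oPc_\Ic}$ and then invoke the Fact again, this time with source $\oPc_\Ic$ and target the same connected probe isentrope.

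Continuity, entropy preservation and monotonicity of $\Lambda|_{\oPc_\Ic}$ are inherited from the corresponding properties for $\Lambda|_{\bPc_\Ic}$ essentially for free. The first two are extracted from the kneading data of the individual polynomial and do not depend on non-degeneracy of its critical points, so they persist under restriction to any subspace. For monotonicity, the rigidity results \cite{KSS1,KSS2,Clark2017} imply that two polynomials in $\oPc_\Ic$ sharing the same combinatorial type are affinely conjugate, so after normalising each $I_k$ to unit length the fibre $\Lambda^{-1}(g)\cap\oPc_\Ic$ is either empty or a singleton, hence trivially connected.

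The step that requires genuine work is surjectivity of $\Lambda|_{\oPc_\Ic}$ onto the probe isentrope. Theorem~\ref{th:A} gives surjectivity from $\bPc_\Ic$, so to each $g$ in the probe isentrope there is some $p\in\bPc_\Ic$ with $\Lambda(p)=g$, but this $p$ might carry a degenerate critical point of multiplicity $m>1$. I would split such a critical point into $m$ simple critical points by a small coefficient perturbation, which remains in $\Ic$ because the indices $l(k)$ count critical points with multiplicity, and then use the rigidity-based inverse of $\Lambda$ to correct the perturbation back into the fibre over $g$. The main obstacle will be making this splitting-and-correcting manoeuvre rigorous: one must show that no kneading class realised inside $\bPc_\Ic$ forces a degenerate critical point, or equivalently that the stratum $\oPc_\Ic$ intersects every $\Lambda$-fibre that $\bPc_\Ic$ does. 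I expect this to follow from a local transversality argument at the degenerate stratum inside a fibre, combined with the continuity of $\Lambda$ proved in Section~\ref{sec:continuity-lambda}, but it is the only step that genuinely goes beyond the Theorem~\ref{th:A} machinery.
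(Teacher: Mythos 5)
Your plan breaks at both of the two properties you single out, and the failures are exactly what forces the paper's proof of Theorem~\ref{th:B} to take a different shape. First, monotonicity: the fibres of $\Lambda$ (or $\tLambda$) restricted to $\oPc_\Ic$ are emphatically not singletons. If $q$ has a periodic turning point, every polynomial in the fibre carries restrictive intervals on which the renormalised map can be essentially arbitrary subject only to an entropy bound; equation~(\ref{eq:1}) of the paper describes the fibre as $\bigcup_{\Psi(q)=\tilde q}\tGamma_q^{-1}(\btPc_{\Ic_q}(\le h))$, which is a positive-dimensional continuum of non-degenerate polynomials that are not combinatorially equivalent to one another (think of a renormalisation window in the quadratic family, all of whose maps have the same Milnor--Thurston image). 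Rigidity only gives connectedness of partial conjugacy classes (Lemma~\ref{lm:connect1}); turning that into connectedness of the fibres requires the inductive argument over subordinate combinatorial data $\Ic_q$, i.e.\ the whole machinery of Section~\ref{sec:proof-main-result}, not an ``affinely conjugate, hence singleton'' shortcut. Second, surjectivity: the statement you hope to prove --- that $\oPc_\Ic$ meets every $\Lambda$-fibre that $\bPc_\Ic$ meets --- is false. Take $\tilde q$ represented by a map with two collided turning points whose common orbit is not eventually mapped onto a periodic turning point; such classes are realised by polynomials with a degenerate critical point, but any non-degenerate polynomial semi-conjugate to such a $q$ would have to collapse the interval between two distinct critical points, forcing that interval into a basin and hence forcing the collided turning point to be eventually periodic --- a contradiction. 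No amount of ``splitting and correcting'' produces a non-degenerate polynomial in that fibre, because the fibre simply contains none.

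The paper circumvents both problems by not using the full probe isentrope at all. It introduces the spaces $\eLc_\Ic$ of piece-wise linear maps whose turning points are $\epsilon$-separated, proves connectedness of $\ePc_\Ic(=h)=\tLambda^{-1}(\teLc_\Ic)(=h)$ by rerunning the Theorem~\ref{th:A} induction, and obtains connectedness of $\pPc_\Ic(=h)=\cup_{\epsilon>0}\ePc_\Ic(=h)$ as a nested union. The genuinely new work is then to account for $\oPc_\Ic(=h)\setminus\pPc_\Ic(=h)$, which by the decomposition~(\ref{eq:op}) consists of sets $\shp_\Ic(q_0)\cap\oPc_\Ic(=h)$ for boundary maps $q_0$ with collided \emph{periodic} turning points; each such set is connected by the $\tGamma_{q_0}$/induction argument, and it is attached to $\pPc_\Ic(=h)$ by constructing explicit perturbations $q_i\to q_0$ (gluing in the scaled map $Q$ whose turning values oscillate around the diagonal), taking the rigid conjugate polynomials $f_i\in\pPc_\Ic(=h)$, and proving via the oscillation property that the limiting polynomial $f_0$ has no collided critical points, so the two connected pieces cannot be separated. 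None of this is recoverable from a transversality heuristic inside a fibre, so as it stands your proposal has a genuine gap at its central step.
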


We will see that the sets $\Pc_\Ic(\le h)$ and  $\oPc_\Ic(\le h)$ are
connected as well.

Interestingly enough the use of multi-interval spaces enable us to
prove the connectedness of isentropes for some families. For example,
the family $p_4: x \mapsto x^4+ax^2+b$ can be seen as a composition of
two quadratic maps: $p_4(x) = (x^2+\frac 12 a)^2 + b-\frac 14
a^2$. Then the Theorem~\ref{th:A} applied in the case
$\Ic=\{2,(1\to 2\to 1), (1,1),(-1,-1)\}$ implies that the isentropes
in the family $p_4$ are connected. Notice that for some values of
parameters $(a,b)$ the map $p_4$ is a real unicritical map of positive
entropy having complex critical points. In general, the following
corollary holds. Let $\Pl_{d,s}$ denote the set of all real
polynomials of degree $d$ which satisfy the following conditions:
all critical points of these polynomials are real and in the
unit interval; such polynomials define proper maps of the unit
interval into itself; the leading coefficients of the polynomials have
the same sign $s$.

\begin{corollary}
  Consider a family of real polynomial maps obtained as a composition
  of polynomials $p_n\circ \cdots\circ p_1$, where
  $p_1\in \Pl_{d_1,s_1}, \ldots,p_n \in \Pl_{d_n,s_n}$ for some
  $s_1,\ldots,s_n\in \{+,-\}$ and natural $d_1,\ldots,d_n$. Then the
  isentropes in such a family are connected. Moreover, if maps with
  degenerate critical points are removed from the family, the
  isentropes remain connected.
\end{corollary}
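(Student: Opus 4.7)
The plan is to encode a composition $p_n\circ\cdots\circ p_1$ as a single cyclic multi-interval multi-modal polynomial map on $n$ disjoint copies of the unit interval, and then invoke Theorems~\ref{th:A} and~\ref{th:B} directly on the resulting space $\bPc_\Ic$.

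Concretely, given a tuple $(p_1,\ldots,p_n)\in \Pl_{d_1,s_1}\times\cdots\times \Pl_{d_n,s_n}$, take disjoint copies $I_1,\ldots,I_n$ of the unit interval and define $f:\bigsqcup_{k=1}^n I_k\to\bigsqcup_{k=1}^n I_k$ by $f|_{I_k}=p_k$, viewed as sending $I_k$ into $I_{\sigma(k)}$, where $\sigma$ is the cyclic shift $k\mapsto k+1 \pmod{n}$. Set $l(k)=d_k-1$, choose $\sign(k)$ according to the orientation with which the proper map $p_k\in\Pl_{d_k,s_k}$ sends the endpoints of $I_k$ into those of $I_{\sigma(k)}$ (a choice forced by $d_k$ and $s_k$), and put $\Ic=\{n,\sigma,l,\sign,0\}$. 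By construction $f\in\bPc_\Ic$, and the assignment $(p_1,\ldots,p_n)\mapsto f$ is a homeomorphism between the parameter space of the composition family and $\bPc_\Ic$, since both sides are naturally parametrised by the coefficients of the $p_k$.

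Entropy respects this identification up to a factor of $n$. Indeed $f^n$ preserves each $I_k$, its restriction to $I_1$ equals $p_n\circ\cdots\circ p_1$, and the restrictions $f^n|_{I_k}$ are pairwise semi-conjugate via the partial compositions, so $\htop(f^n)=\htop(p_n\circ\cdots\circ p_1)$. Combined with $\htop(f^n)=n\cdot\htop(f)$, the isentrope at entropy $h$ in the composition family corresponds to $\bPc_\Ic(=h/n)$, which is connected by Theorem~\ref{th:A}; hence so is the former. For the ``moreover'' assertion, requiring each factor $p_k$ to have only simple critical points cuts out precisely the subspace $\oPc_\Ic$, and Theorem~\ref{th:B} gives its connectedness.

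The main point to check carefully is that the assignment above really is a homeomorphism in the natural topologies and that the entropy identity goes through, including the normalisation forced by rescaling each $I_k$ to unit length. A related subtle point worth flagging: the composition $p_n\circ\cdots\circ p_1$ viewed as a single polynomial can acquire degenerate critical points purely from coincidences of critical orbits even when every factor $p_k$ is non-degenerate, so the second statement of the corollary should be read as concerning the subfamily whose \emph{factors} are non-degenerate --- which is exactly what $\oPc_\Ic$ captures.
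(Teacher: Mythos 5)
Your proposal is correct and is essentially the paper's own (largely implicit) argument: the paper derives the corollary by exactly this encoding of the composition family as a cyclic multi-interval space $\bPc_\Ic$ (as in its $p_4$ example with $\Ic=\{2,(1\to 2\to 1),(1,1),(-1,-1)\}$) and then invokes Theorems~\ref{th:A} and~\ref{th:B}, with the entropy correspondence $\htop(f)=\frac{1}{n}\htop(p_n\circ\cdots\circ p_1)$ understood. Your reading of the ``moreover'' clause at the level of the factors (i.e.\ the subfamily $\oPc_\Ic$) is the one the paper's method supports and evidently intends, so flagging that point is appropriate rather than a gap.
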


\vspace{5mm}

One of the ingredients of the proof is based on the Rigidity Theorems
\cite{KSS1}, \cite{Clark2017}  and can be proved for multi-interval maps
exactly in the same way as Lemma 3.12 in \cite{BvS}. Later this lemma will
enable us to prove monotonicity of a certain map.

\begin{lemma}\label{lm:connect1}
  Let $f$ be in $\bPc_\Ic$ and let $\ph_\Ic(f) \subset \bPc_\Ic$ denote the
  set of maps partially conjugate to $f$. Then the set $\ph_\Ic(f)$ is
  connected.
\end{lemma}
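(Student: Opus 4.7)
The plan is to mirror, in the multi-interval setting, the proof of Lemma~3.12 in \cite{BvS}, with the Rigidity Theorem \cite{KSS1}, \cite{KSS2}, \cite{Clark2017} providing the main input. Fix $f\in\bPc_\Ic$ and pick $f_1,f_2\in\ph_\Ic(f)$; the goal is to build a continuous path $t\mapsto f_t$ in $\ph_\Ic(f)$ joining them. Thanks to the decomposition $\bMc^b_\Ic=\bMc^b_{\Ic_1}\times\cdots\times\bMc^b_{\Ic_m}$ into primitive factors (which restricts to the corresponding decomposition of $\bPc_\Ic$), together with the fact that any homeomorphism witnessing partial conjugacy preserves each interval $I_k$ and hence restricts to a partial conjugacy on each factor, it suffices to construct the path one primitive factor at a time. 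I will therefore assume below that $\bPc_\Ic$ is primitive.

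First, I would identify the combinatorial invariants preserved by partial conjugacy. By definition, the homeomorphism $H$ conjugates $f_1$ to $f_2$ on $I\setminus B(f_1)$, sends $B(f_1)$ onto $B(f_2)$, and matches critical points of $f_1$ with critical points of $f_2$ of the same order. Consequently $f_1$ and $f_2$ share the kneading invariants of every critical point outside the basins, the combinatorial ``skeleton'' of each essential attracting cycle (period, cyclic order in the dynamics, and which critical points lie in which immediate basin), and the whole combinatorial dynamics on $I\setminus B(f_j)$. The Rigidity Theorem for real polynomials then asserts that two maps in $\bPc_\Ic$ carrying these combinatorial data agree up to two families of internal parameters: (i) the multipliers of the essential attracting cycles, each ranging in an open interval, and (ii) the positions of the critical points trapped inside attracting basins, each ranging in an open subset of its immediate basin.

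Second, one checks that the slice $S\subset\bPc_\Ic$ obtained by fixing these combinatorial data and letting the internal parameters vary is connected. Being a real semi-algebraic subset of the finite-dimensional polynomial parameter space whose points are parametrised by the coordinates above, $S$ is (essentially) an open cell, hence connected. A straight-line interpolation of the internal coordinates from the values realised by $f_1$ to those realised by $f_2$ yields a candidate path in $S$. The main obstacle is to ensure that no bifurcation occurs along this segment: no multiplier may reach $\pm 1$, no critical point may escape its basin, and no two critical points may collide in a way that would alter the combinatorial datum $l$, since any such event would push the deformation out of $\ph_\Ic(f)$. Following \cite{BvS}, this is handled by splitting the deformation into two stages --- first moving the multipliers alone while keeping them strictly inside $(-1,1)$, and only afterwards moving the trapped critical points within their now-stable basins --- and invoking rigidity at each step to guarantee that the external dynamics is untouched. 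Concatenating the paths obtained on each primitive factor produces the desired path in $\ph_\Ic(f)$.
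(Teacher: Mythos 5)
Your overall strategy---defer the real content to the Rigidity Theorems and mirror Lemma~3.12 of \cite{BvS}---is exactly what the paper does (it offers no further detail than that citation), so the question is whether your elaboration stands on its own. It does not, because the pivotal step is asserted rather than proved: you claim that the Rigidity Theorem says that two maps of $\bPc_\Ic$ carrying the combinatorial data of a partial conjugacy class agree up to the multipliers of the essential attracting cycles and the positions of the trapped critical points, each ranging over an interval or an open subset. Rigidity asserts nothing of the sort; it says that maps with the same combinatorics (respecting critical points) are quasisymmetrically, and under the appropriate hypotheses affinely, conjugate. The statement that $\ph_\Ic(f)$ is globally parametrized by finitely many ``internal'' coordinates varying over a connected region is a strong structural fact about the class---essentially a cell structure for hyperbolic-component-like sets---and it already contains the connectivity you are trying to prove, so assuming it begs the question. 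Establishing that one can deform the basin data along a path while rigidity pins down everything outside the basins is precisely the work done in the proof of Lemma~3.12 of \cite{BvS}; the connectivity there does not come from a straight-line interpolation of multipliers and critical positions.

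The auxiliary justifications also fail as stated. The slice $S$ is not semi-algebraic---membership in $\ph_\Ic(f)$ is cut out by infinitely many itinerary conditions on the dynamics outside the basins---and semi-algebraicity would not imply connectedness anyway. The two-stage deformation (``multipliers first, then trapped critical points'') is not a well-defined operation before the parametrization is in place, and ``invoking rigidity at each step to guarantee that the external dynamics is untouched'' is not a legitimate use of rigidity: rigidity compares two maps already known to share their combinatorics, it does not prevent the combinatorics from changing along your segment, which is exactly the bifurcation issue you need to rule out. Finally, in this paper $B(f)$ is the basin of all non-repelling periodic points, so a partial conjugacy class may contain parabolic cycles and superattracting configurations, for which ``multiplier strictly inside $(-1,1)$'' and ``position in the immediate basin'' are not coordinates at all. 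The reduction to primitive factors is fine, but everything after it needs the actual argument of Lemma~3.12 of \cite{BvS} (with the Rigidity Theorems of \cite{KSS1}, \cite{Clark2017} used as input in the way they are stated), not a restatement of its conclusion.
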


$\tPc_\Ic$ will denote the quotient space of $\Pc_\Ic$ with respect to
the partial conjugacy. For any map $f\in \Mc_\Ic^1$ there
exists a map $p\in \Pc_\Ic$ which is semi-conjugate to $f$. Moreover,
this semi-conjugacy collapses only intervals which are in the non-essential basins of
attraction and wandering intervals, see
Theorem 6.4, page 156 in \cite{Melo1995}. If there
are two maps $p_1,p_2 \in \Pc_\Ic$ which are both semi-conjugate to
$f$, then $p_1$ and $p_2$ are partially conjugate. Thus we can
define the map $\Upsilon : \Mc_\Ic^1 \to \tPc_\Ic$ so $\Upsilon(f)$ is
a set of partially conjugate polynomial maps which contains a map
semi-conjugate to $f$. Obviously, $\Upsilon$ is surjective, it is also
easy to see that it is continuous.

\section{Piece-Wise Linear model}
\label{sec:linear-model}

Fix $h\ge 0$ and let us consider a space of piece-wise linear maps whose
slopes are $\pm e^h$ and which satisfy the same combinatorial
properties as $\Mc_\Ic$. More precisely, for
$\Ic=\{N,\sigma,l,\sign,\bnd\}$ as before we will study the space of
piece-wise linear maps $q: I \to I$, where $I=\cup_{k=1}^N I_k$, $q$
maps boundary of $I$ to itself, for any $k\le N$ one has $q(I_k)\subset
I_{\sigma(k)}$, there are precisely $l(k)$ turning points of $q$ in
the interval $I_k$ (though some of them we allow to collide), and $s(k)$ tells
us if $q$ is decreasing or increasing at the left boundary point of the
interval $I_k$. The function $\bnd$ does not play any role here.

To normalise the settings and slightly abusing the notation we
consider the points $a_0=0\le a_1 \le \cdots \le a_N=1$ and set
$I_k=[a_{k-1},a_k]$. Then the map $q$ is discontinuous at points
$a_k$. To distinguish the different values of the map $q$ on different
sides of the points $a_k$ we introduce the following notation:
$q(a^+_k) = \lim_{x \searrow a_k} q(x)$ and $q(a^-_k) = \lim_{x \nearrow a_k} q(x)$.

Given a map $q$ described above for any branch of $q$ there exists $b$
such that for that branch we have $q(x) = \pm e^h x + b$. So, any map
as above can be described by the following data: the combinatorial
data $\Ic$, the points $a_k$ for $k=0,\ldots,N$, the coefficients
$b_k^i$ for $k=1,\ldots,N$, $i=0,\ldots,l(k)$. The $i^{th}$ branch of
$q$ on $I_k$ is then given by the formula
$q(x) = (-1)^i s(k) e^h x + b_k^i$.

Of course, not for all possible choices of $a_k$ and $b_k^i$ there is
a map which has this prescribed data. The following conditions should
be satisfied:
\begin{itemize}
\item The $i^{th}$ turning point $c_k^i$ of $q|_{I_k}$ must belong to $I_k$. The value of $c_k^i$ can be found from
  $$
  -(-1)^{i} s(k) e^h c_k^i + b_k^{i-1} = (-1)^{i} s(k) e^h c_k^i +
  b_k^{i},
  $$
  so $c_k^i = \frac 12 (-1)^i s(k) e^{-h} (b_k^{i-1}-b_k^i)$. All
  turning points should be ordered correctly,  thus the following
  inequalities must hold:
  \begin{equation} \label{ieq:1}
    0=a_0\le c_1^1\le c_1^2\le \cdots \le c_1^{l(1)}\le a_1\le
    c_2^1 \le \cdots \le a_N=1
  \end{equation}
\item The turning values should belong to the corresponding interval
  as well. The turning value $q(c_k^i)$ is $\frac 12
  (b_k^{i-1}+b_k^{i})$, therefore
  \begin{equation} \label{ieq:2}
    a_{\sigma(k)-1} \le \frac 12 (b_k^{i-1}+b_k^{i}) \le a_{\sigma(k)}
  \end{equation}
  should be satisfied for all $k=1,\ldots,N$ and $i=1,\ldots,l(k)$.
\item Finally, the map $q$ must have the prescribed values at the
  boundary points of the intervals $I_k$. We know that
  $q(I_k) \subset I_{\sigma(k)}$ and the boundary points of $I_k$ are
  mapped to the boundary points of $I_{\sigma(k)}$. Let
  $q(a_{k-1}^+) = a_{\sigma_l(k)}$ and
  $q(a_{k}^-) = a_{\sigma_r(k)}$, where the functions $\sigma_l$ and
  $\sigma_r$ are completely defined by the combinatorial data $\Ic$
  and $\sigma_l(k)$ and $\sigma_r(k)$ can assume one of the two values :
  $\sigma(k)$ or $\sigma(k)-1$ depending on $s(k)$ and $l(k)$. Therefore,
  \begin{eqnarray}
    \label{eq:4}
    s(k)e^h a_{k-1} + b^0_k & = & a_{\sigma_l(k)},\\
    \label{eq:5}
    (-1)^{l(k)}s(k)e^h a_{k} + b^{l(k)}_k & = & a_{\sigma_r(k)}.
  \end{eqnarray}
\end{itemize}

For given $h$ and $\Ic$ if $a_k$ and $b_k^i$ satisfy the inequalities
and equalities
above, then the corresponding piece-wise linear map described by these
data exists. The set of these maps we will denote by
$\Lc_\Ic(=h)$. Obviously, $\Lc_{\Ic}(=h)$ is a compact subset of $\R^D$
for some $D$ depending on $\Ic$. Moreover, since $\Lc_\Ic(=h)$ is
described by linear inequalities and equalities in $\R^D$, it is connected as an
intersection of finitely many connected convex subsets of $\R^D$. So,
we have proved 
\begin{lemma}
  The set $\Lc_\Ic(=h)$ is connected.
\end{lemma}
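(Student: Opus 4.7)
The plan is to exploit the explicit parameterisation of $\Lc_\Ic(=h)$ already assembled in the preceding paragraphs: a map in $\Lc_\Ic(=h)$ is completely encoded by the tuple of real numbers $(a_0,\ldots,a_N,\,b_k^i)_{1\le k\le N,\,0\le i\le l(k)}$, sitting in some ambient $\R^D$, and the lemma should follow by observing that every defining relation is affine in these coordinates when $h$ (hence $e^h$) is fixed.

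First I would fix $h$ and $\Ic$ and describe $\Lc_\Ic(=h)$ as the intersection, inside $\R^D$, of the following families of constraints: the boundary/endpoint normalisation $a_0=0$, $a_N=1$; the monotonicity chain~\eqref{ieq:1}; the range constraints~\eqref{ieq:2} on turning values; and the boundary-matching equations~\eqref{eq:4}--\eqref{eq:5}. I would then check, one family at a time, that each relation is affine linear in the $(a_k,b_k^i)$. For~\eqref{ieq:1} the only nontrivial point is that the turning points $c_k^i=\tfrac12(-1)^i s(k)e^{-h}(b_k^{i-1}-b_k^i)$ depend linearly on the $b$'s, so the inequalities $c_k^i\le c_k^{i+1}$ and $a_{k-1}\le c_k^1$, $c_k^{l(k)}\le a_k$ are linear; for~\eqref{ieq:2} the half-sum $\tfrac12(b_k^{i-1}+b_k^i)$ is obviously linear; and~\eqref{eq:4}--\eqref{eq:5} are linear by inspection.

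Once this routine verification is done, $\Lc_\Ic(=h)$ is exhibited as the intersection of finitely many closed half-spaces and hyperplanes of $\R^D$, i.e.\ a (possibly empty, possibly unbounded-looking but actually bounded by $a_0=0$, $a_N=1$ and~\eqref{ieq:1}--\eqref{ieq:2}) closed convex polytope. A convex subset of $\R^D$ is path-connected, hence connected, which gives the lemma. Compactness, though not asserted in the statement, comes for free from closedness together with the a priori bound $0\le a_k\le 1$ and the resulting uniform bound on the $b_k^i$ coming from~\eqref{eq:4}.

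The only real obstacle is bookkeeping: making sure that none of the conditions hides a nonlinearity in $(a_k,b_k^i)$ once $h$ is frozen, and that every combinatorial requirement imposed by $\Ic$ (the assignments $\sigma_l,\sigma_r$, the signs $\sign(k)$, the multiplicities $l(k)$) has actually been encoded by one of the listed affine constraints, so that no additional, potentially non-convex condition on the parameters is needed. Since the parity $(-1)^i s(k)$, the assignment $\sigma$ and the functions $\sigma_l,\sigma_r$ are constants determined solely by $\Ic$, this verification is purely combinatorial and presents no substantive difficulty; the argument is essentially that $\Lc_\Ic(=h)$ is by construction a convex polytope, and therefore connected.
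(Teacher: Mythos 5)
Your proof is correct and is essentially the paper's own argument: for fixed $h$ the set $\Lc_\Ic(=h)$ is cut out by the affine equalities and inequalities \eqref{ieq:1}, \eqref{ieq:2}, \eqref{eq:4}--\eqref{eq:5} in the coordinates $(a_k,b_k^i)$, hence is a convex (indeed compact) subset of $\R^D$ and therefore connected.
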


Finally, the space $\Lc_\Ic$ we define as  $\Lc_\Ic := \cup_{h> 0}
\Lc_\Ic(=h)$.

Let us repeat that we allow maps in $\Lc_\Ic$ to have colliding
turning points. For example, if two turning points $c_k^i$ and
$c_k^{i+1}$ of the map $q$ collide, i.e. $c_k^i=c_k^{i+1}$, then the
graph of $q$ will have not $|l|+N$ branches as a generic map in
$\Lc_\Ic$ but only $|l|+N -1$ branches and the point $c_k^i$ might not
be a turning point on the graph. However, we will keep track of such
collided points and we will still call them turning. Other
(i.e. non-collided) turning points of $q$ will be called
\emph{simple}.

\section{A link between $\Mc_\Ic$ and $\Lc_\Ic$}
\label{sec:link-between-mc_ic}

Milnor and Thurston \cite{M-Th-88} (see also  \cite{parry-66}) defined the function
$\Lambda : \bMc_\Ic^1 \to \bLc_\Ic$ such that the maps  $f
\in \bMc_\Ic^1$ and $\Lambda(f)$ are semi-conjugate and of the same
topological entropy (they did it for the maps of an interval, but
their construction can be applied to our case with no alterations). 
The particular definition of $\Lambda$ is of no importance for
us, the only thing we are going to use is the fact that for any
function $f$ there exists $q\in \bLc_\Ic$ semi-conjugate to $f$ and of
the same topological entropy.

For a map $q \in \bLc_\Ic$ let us define a set of all maps in
$\bMc_\Ic^b$ which are semi-conjugate to $q$ and denote it by
$\sh^b_\Ic(q)$. Notice that maps in $\sh^b_\Ic(q)$ can have topological entropy
different from the entropy of $q$, and $\htop(q)  \le \htop(f)$ for
any $f\in \sh^b_\Ic(q)$.

The set $\sh^b_\Ic(q)$ is closely related to the notion of a restrictive
interval. An interval $J\subset I$ is called a $restrictive$ interval
of a map $f\in \bMc_\Ic^1$ if there exists $n\in \N$ such that $f^n(J)
\subset J$ and $f^n(\partial J) \subset \partial J$. A connected
component of a  preimage of a restrictive interval we will also call
a restrictive interval.

Fix maps $q \in \bLc_\Ic$, $f \in \sh^1_\Ic(q)$ and let $H$ be the
semi-conjugacy between $f$ and $q$. Suppose that one of turning points
$c_q$ of the map $q$ is periodic of period $n$. The set $H^{-1}(c_q)$
cannot be just a point. Indeed, if  $H^{-1}(c_q)$ is a point, then it
would be a critical point of $f$ and, therefore,  $c_f := H^{-1}(c_q)$
would be a superattractor. Iterates of all points in a neighbourhood of $c_f$
would converge to the orbit of $c_f$, which is impossible if $H$ is
not locally constant near $c_f$.

Thus $H^{-1}(c_q)$ is an interval, and let us define
$J_k:=H^{-1}(q^k(c_q))$ for $k=0,\ldots,n-1$. It is easy to see that
$J_k$ are restrictive intervals, $f(J_k)\subset J_{k+1 (\mod n)}$,
$f(\partial J_k)\subset \partial J_{k+1 (\mod n)}$.  The map $f$
restricted to $\cup_{k=0}^{n-1} J_k$ belongs to the cyclic space
$\bMc_{n,\sigma',l',\sign',\bnd'}$, where $\sigma'$, $l'$, $\sign'$ are
defined in an obvious way. The definition of the function $\bnd'$ is
more subtle and is done as follows. One or both boundary points of
$J_0$ are periodic. Let $x$ be a periodic boundary point of $J_0$ of
period $n_0$ (where $n_0$ is either $n$ or $2n$). If $x$ is an
interior point of $\Dom(f)$, then $x$ cannot be a hyperbolic
attractor. Indeed, otherwise it would attract trajectories of 
points on both sides of $x$, so $H$ must be locally constant around
$x$ and then $x$ cannot be a boundary point of $J_0$. Thus,
$|Df^{n_0}(x)| \ge 1$ and, in this case, we set $\bnd'(x)=1$. If the
point $x$ is a boundary point of $\Dom(f)$, then we set
$\bnd'(x)=\bnd(x)$.

If there exists another turning point $c^2_q$ of $q$ so that
$q^m(c^2_q)=c_q$ where $m$ is minimal with this property, we can do a
similar construction: define $J^2_k := H^{-1}(q^k(c^2_q))$ for
$k=0,\ldots,m-1$. Then again the map $f$ restricted to
$\left(\cup_{k=1}^{n-1} J_k\right) \bigcup \left(\cup_{k=1}^{m-1}
  J^2_k\right)$ is a essential multi-interval multi-modal map.

We can repeat this construction for all periodic turning points of $q$
and for all turning points of $q$ one of whose iterates is mapped onto
a periodic turning point. In this way to any map $f\in \sh^1_\Ic(q)$
we will associate another multi-interval multi-modal map (which is a
restriction of $f$ to the union of the restrictive intervals as above)
in $\bMc_{\Ic_q}^1$ for an appropriate
$\Ic_q:=\{N_q,\sigma_q,l_q,\sign_q,\bnd_q\}$. Notice that $\Ic_q$
depends only on $q$ and is independent of $f$. Also, from the
construction it follows that the space $\bMc_{\Ic_q}^b$ is essential.

The union of all restrictive intervals used in this construction we
will denote by $\RDom(f,q)$. From the definition of $\Lambda$ it follows
that if $q=\Lambda(f)$, then $\htop(f|_{\Dom(f)\setminus\RDom(f,q)})=\htop(f)$ and
$\htop(f|_{\RDom(f,q)})\le\htop(f)$.

Thus for any map $q \in \bLc_\Ic$ there exists a map from
$\sh^1_\Ic(q)$ to $\bMc_{\Ic_q}^1$ defined as above. Notice that
because of the way we have constructed the function $\bnd_q$ this map
is surjective. We will be more
interested in the restriction of this map to the space $\bPc_\Ic$ and
denote this map by $\Gamma_q : \shp_\Ic(q) \to \bMc_{\Ic_q}^\infty$,
where $\shp_\Ic(q)$ denotes $\sh^1_\Ic(q) \cap \bPc_\Ic $. If $q$ does
not have periodic turning points, we set $N_q=0$ and the map
$\Gamma_q$ is trivial.

Let us list a few properties of $\Ic_q$ and $\Gamma_q$. In what
follows we denote the map $\Upsilon \circ \Gamma_q$ by $\tGamma_q$.

\begin{lemma} \label{lm:gamma1}
  For any $q\in \bLc_\Ic$
  \begin{enumerate}
  \item $\Gamma_q$ and $\tGamma_q$ are continuous;
  \item the map $\tGamma_q : \shp_\Ic(q) \to \btPc_{\Ic_q}$ is
    surjective;\label{item:gamma1-2}
  \item if for $p_1,p_2 \in \shp _\Ic(q)$ one has $
    \tGamma_q(p_1) = \tGamma_q(p_2)$, then
    $\Upsilon(p_1)=\Upsilon(p_2)$ (i.e. $p_1$ and $p_2$ are partially conjugate);\label{item:gamma1-3}
  \item the map $\tGamma_q : \shp_\Ic(q) \to \btPc_{\Ic_q}$ is
    monotone, i.e. for any $\upsilon \in \btPc_{\Ic_q}$ the set
    $\tGamma_q^{-1}(\upsilon) $ is
    connected;\label{item:gamma1-4}
  \item if $\Ic$ is cyclic and $\htop(q)>0$, then $\Ic_q$ is subordinate to
    $\Ic$. \label{item:gamma1-5}
  \end{enumerate}
\end{lemma}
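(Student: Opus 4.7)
The five items are closely interrelated and, in my view, best handled in the order \textbf{(1)}, \textbf{(3)}, \textbf{(2)}, \textbf{(4)}, \textbf{(5)}, because \textbf{(4)} falls out of \textbf{(3)} plus Lemma~\ref{lm:connect1}, and the combinatorial bookkeeping for \textbf{(5)} is cleanest once the map $\Gamma_q$ is understood on the nose.

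\smallskip
\noindent\textbf{Continuity (1).} The semi-conjugacy $H=H_p$ between $p\in\shp_\Ic(q)$ and $q$ is obtained from the Milnor--Thurston construction and, for polynomials (which have no wandering intervals and whose non-essential basins are controlled), is uniquely determined by $p$ and depends continuously on $p$ in the Hausdorff sense on graphs. Consequently the restrictive intervals $J_k=H^{-1}(q^k(c_q))$, $J^2_k=H^{-1}(q^k(c^2_q))$, etc.\ that make up $\RDom(p,q)$ move continuously, and so $\Gamma_q(p)=p|_{\RDom(p,q)}$ varies continuously in $\Mc^\infty_{\Ic_q}$. Composing with the continuous projection $\Upsilon$ yields continuity of $\tGamma_q$.

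\smallskip
\noindent\textbf{Same-image $\Rightarrow$ partially conjugate (3).} Suppose $\tGamma_q(p_1)=\tGamma_q(p_2)$. Then $\Gamma_q(p_1)$ and $\Gamma_q(p_2)$ are partially conjugate via some homeomorphism $h_0$ of $\RDom$. Outside $\RDom$ the complement is covered by the semi-conjugacies $H_1$, $H_2$ with $q$; on this complementary part (which by construction of $\RDom$ contains no restrictive intervals containing critical points) the fibres of $H_i$ are either points or wandering/basin intervals, so $H_2\circ H_1^{-1}$ is well defined up to collapsing basins and provides a partial conjugacy on $I\setminus\RDom$. I would then check, branch by branch along the boundary orbits of the restrictive intervals, that $h_0$ and $H_2\circ H_1^{-1}$ glue into a global homeomorphism satisfying the requirements of partial conjugacy.

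\smallskip
\noindent\textbf{Surjectivity (2).} Given $\upsilon\in\tPc_{\Ic_q}$, pick a representative $g\in\Pc_{\Ic_q}$ on the model family of restrictive intervals, and pick any $p_0\in\shp_\Ic(q)$ (which exists by the definition of $\Lambda$). On the restrictive intervals $\RDom(p_0,q)$ I would replace $p_0$ by a rescaled copy of $g$ and then, on the complementary part of $\Dom$, modify the polynomial in its conjugacy class so that the resulting piecewise-defined map extends to a polynomial in $\Pc_\Ic$ semi-conjugate to $q$. The realisation step on the complement is a surgery in the polynomial family using the same kneading data as $q$; it follows from the Milnor--Thurston correspondence between admissible kneading sequences and polynomial maps (combined with the rigidity statements of \cite{KSS1},\cite{KSS2},\cite{Clark2017} already used for Lemma~\ref{lm:connect1}). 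The main technical point will be controlling the boundary matching of $g$ with the ambient polynomial so that the hypothesis $|Dp^{n_0}|\ge\bnd_q$ at the boundary periodic points is realised; the construction of $\bnd_q$ was made exactly to allow this.

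\smallskip
\noindent\textbf{Monotonicity (4).} Fix $\upsilon\in\tPc_{\Ic_q}$ and $p_0\in\tGamma_q^{-1}(\upsilon)$. By \textbf{(3)}, $\tGamma_q^{-1}(\upsilon)\subset\ph_\Ic(p_0)$. Conversely, if $p'\in\ph_\Ic(p_0)$ then a partial conjugacy sends restrictive intervals to restrictive intervals and preserves the data recorded by $\Gamma_q$, so $\tGamma_q(p')=\upsilon$; hence $\tGamma_q^{-1}(\upsilon)=\ph_\Ic(p_0)$. Lemma~\ref{lm:connect1} then gives connectedness.

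\smallskip
\noindent\textbf{Subordination (5).} Assume $\Ic$ is cyclic of period $N$ and $\htop(q)>0$; I want $\Ic_q\prec\Ic$. If $|l_q|<|l|$ there is nothing to prove, so suppose $|l_q|=|l|$, i.e.\ every turning point of $q$ is eventually periodic. In a cyclic $\Ic$ the map $\sigma$ has a single orbit of length $N$ on the intervals, so every periodic orbit of $q$ has period a multiple of $N$, and each primitive factor of $\Ic_q$ containing a periodic orbit of length $mN$ together with its pre-periodic turning points has period $\ge mN$. Thus $\pmin(\Ic_q)=N$ could only occur if some periodic orbit of $q$ has length exactly $N$, consists entirely of turning points, and no other turning point maps into that orbit. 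In that case each $I_k$ contains exactly one turning point which lies on this orbit, and the equations defining $\Lc_\Ic$ (specifically~\eqref{eq:4}--\eqref{eq:5} together with the requirement that the orbit closes up with slope $\pm e^h$ under $q^N$) force $h=0$. The hard part here is carrying out this linear-algebra calculation cleanly in the general cyclic setting; I would reduce it to the observation that a piecewise-linear $q^N:I_k\to I_k$ of slope $\pm e^{Nh}$ with a fixed turning point and boundary mapped to boundary can only exist when $h=0$, which contradicts $\htop(q)>0$.
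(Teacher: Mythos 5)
Your items (1)--(4) essentially follow the paper's own route. (1) and (3) are the paper's arguments (off $\RDom(p_i,q)$ the partial conjugacy is assembled from the two semi-conjugacies with $q$, on $\RDom(p_i,q)$ it comes from $\tGamma_q(p_1)=\tGamma_q(p_2)$), and in (4) you make explicit the reverse inclusion $\ph_\Ic(p_0)\subset\tGamma_q^{-1}(\upsilon)$ which the paper leaves implicit when it deduces (4) from (3) and Lemma~\ref{lm:connect1}; that is a useful addition. In (2), however, the surgery should not be attempted inside the polynomial family: a piecewise-defined map cannot be ``extended to a polynomial''. The paper's construction glues a representative $g\in\Upsilon^{-1}(\upsilon)$, chosen with matching derivatives at the boundary points of the restrictive intervals, into a map $f\in\sh^1_\Ic(q)$; the glued $C^1$ map is still semi-conjugate to $q$, and only then does one pass to a polynomial semi-conjugate to it via Theorem 6.4 of \cite{Melo1995}, which collapses only wandering intervals and non-essential basins, so the image under $\tGamma_q$ is still $\upsilon$. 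Your appeal to a kneading-theoretic realisation inside $\Pc_\Ic$ is, as written, not an argument, though it is replaceable by this two-step procedure.

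Item (5) is where the genuine gap lies. From $\pmin(\Ic_q)=N$ you may conclude that some periodic turning orbit of $q$ has period exactly $N$, but not that this orbit ``consists entirely of turning points'', nor that ``no other turning point maps into that orbit'', nor that ``each $I_k$ contains exactly one turning point'': pre-periodic turning points feeding into a period-$N$ turning orbit enlarge the primitive factor of $\Ic_q$ without changing its period, and such points are exactly what makes $|l_q|=|l|$ possible when $|l|>N$. Consequently the return maps you would have to analyse are in general multimodal, and then your final ``observation'' is false. For example, the bimodal map $q(x)=1-2x$ on $[0,\frac13]$, $q(x)=2x-\frac13$ on $[\frac13,\frac{7}{12}]$, $q(x)=2-2x$ on $[\frac{7}{12},1]$ lies in $\Lc_\Ic$, has constant slope $\pm 2$ (so entropy $\log 2>0$), maps the boundary to the boundary, and has the fixed turning point $c^1=\frac13$ (moreover $q^2(c^2)=c^1$). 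So no slope or linear-algebra computation on $q$ alone of the kind you sketch can close the case you need, and you yourself flag that computation as not carried out.

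The paper's proof of (5) is of a different nature: it fixes a smooth representative $f\in\sh^1_\Ic(q)$ and argues with entropy. If $|l_q|=|l|$, every critical point of $f$ lies in $\RDom(f,q)$, so all branches of $f$ on $\Dom(f)\setminus\RDom(f,q)$ are monotone; if in addition each $I_k$ contains one and only one component of $\RDom(f,q)$ (which the paper extracts from $\pmin(\Ic_q)=N$), each complementary interval is mapped monotonically into a single complementary interval, so $\htop(f|_{\Dom(f)\setminus\RDom(f,q)})=0$, while the semi-conjugacy collapses $\RDom(f,q)$ onto finitely many orbits, giving $\htop(q)\le\htop(f|_{\Dom(f)\setminus\RDom(f,q)})=0$ and contradicting $\htop(q)>0$. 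Note that the delicate configuration in either approach is precisely the one with several restrictive components inside one $I_k$ (as in the example above, where both turning points are controlled by the fixed turning orbit): your write-up dismisses it through the unjustified combinatorial claims quoted above, whereas the paper's argument rests on the ``one and only one component per $I_k$'' assertion. Any complete proof of (5) must confront this case explicitly; as it stands, your reduction does not.
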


\begin{proof}
  The continuity of $\Gamma_q$ is obvious and the map $\tGamma_q$ is a
  composition of two continuous maps.

  The surjectivity is also easy to see: fix any $f\in \sh^1_\Ic(q)$ and
  $\upsilon \in \btPc_{\Ic_q}$, and take $g \in
  \Upsilon^{-1}(\upsilon)$ which has matching derivatives as $f$ at
  boundary points of $I_q$. Then one can glue $g$ into corresponding
  restrictive intervals of $f$ and obtain a map which is still
  semi-conjugate to $q$ and has a prescribed image under $\tGamma_q$.
  Then take $p\in \bPc_\Ic$ semi-conjugate to $f$ given by
  aforementioned Theorem 6.4, \cite{Melo1995}. It is easy to see
  that $\tGamma_q(p) = \upsilon$.

  Claim~\ref{item:gamma1-3} is straightforward: in the set $\Dom(p_i)
  \setminus \RDom(p_i,q)$ the partial conjugacy is
  given by the semi-conjugacies between $p_i$ and $q$, and inside of
  $\RDom(p_i,q)$ it is defined by $\tGamma_q(p_i)$.

  Claim~\ref{item:gamma1-4} follows from Claim~3 and Lemma~\ref{lm:connect1}.

  For the last claim of the lemma consider
  $\Ic_q=\{N_q,\sigma_q,l_q,\sign_q,\bnd_q\}$ and take some $f \in \sh^1_\Ic(q)$.
  Clearly, $|l_q|\le |l|$ and $\pmin(\Ic_q)\ge N$. Suppose that
  $|l_q|=|l|$ and $\pmin(\Ic_q)=N$. In this case each connected
  component of $\Dom(f)$ contains one (and only one) of connected
  components of $\RDom(f,q)$ and all branches of $f|_{\Dom(f)\setminus
    \RDom(f,q)}$ are monotone (as $|l_q|=|l|$). Then
  $\htop(q)\le \htop(f|_{\Dom(f)\setminus \RDom(f,q)})=0$, and we get a
  contradiction. Thus either $|l_q|<|l|$ or $\pmin(\Ic_q)>N$, and,
  therefore, $\Ic_q$ is subordinate to $\Ic$.
\end{proof}

\section{On the continuity of $\Lambda$}
\label{sec:continuity-lambda}

In the previous section we have defined the map $\Lambda : \bMc_\Ic^b
\to \bLc_\Ic$. This map is neither continuous nor surjective. We will
modify the space $\bLc_\Ic$ to fix this.

Two maps $q_1$ and $q_2$ in $\bLc_\Ic$ are called \emph{similar} if
their topological entropies are the same and there exists a map $f \in
\bMc_\Ic^1$ with $\htop(f)=\htop(q_1)=\htop(q_2)$ and which is
semi-conjugate to both $q_1$ and $q_2$. We will denote this by $q_1
\approx q_2$.

For every map $f\in\bMc_\Ic^1$ there exists a map $p \in \bPc_\Ic$
semi-conjugate to $f$ which just collapses the possible wandering
intervals and non-essential basins of attraction. Hence, if
$q_1\approx q_2$, then there exists $p\in \bPc_\Ic$ such that
$\htop(p)=\htop(q_1)=\htop(q_2)$ and $p$ is semi-conjugate to both
$q_1$ and $q_2$.


The relation $\approx$ is reflexive and symmetric, but not necessary
transitive. The relation we are about to introduce will generalise
$\approx$ and will be transitive, thus, it will be an equivalence relation.
Two maps $q_1$ and $q_2$ in $\bLc_\Ic$ are called \emph{related} if
there exist finitely many maps $q_1', \ldots, q_m'\in \bLc_\Ic$ such
that
$$
q_1 \approx q_1'\approx \cdots \approx q_m'\approx q_2.
$$
In this case we will write $q_1 \sim q_2$.

The quotient space of $\bLc_\Ic$ with respect to $\sim$ will be denoted
by $\btLc_\Ic$ and let $\Psi : \bLc_\Ic \to \btLc_\Ic$ be the
corresponding projection. Define $\tLambda := \Psi \circ \Lambda$.

\begin{mytheorem} \label{thr:continuous}
  The map $\tLambda : \bPc_\Ic \to \btLc_\Ic$ is
  surjective and continuous.
\end{mytheorem}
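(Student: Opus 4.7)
The plan is to split the theorem into surjectivity and continuity, handling the first by polynomial realization of prescribed combinatorics and the second by a compactness plus Helly's selection argument that forces any subsequential limit of $\Lambda(p_n)$ into the same $\sim$-class as $\Lambda(p)$.

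For surjectivity, fix $q \in \bLc_\Ic$. I would invoke a standard realization statement for multi-modal polynomial maps (extended to the multi-interval setting by working on each primitive component): there exists $p \in \bPc_\Ic$ which is semi-conjugate to $q$ and satisfies $\htop(p) = \htop(q)$. The Milnor--Thurston construction simultaneously gives a semi-conjugacy from $p$ to $\Lambda(p)$ with the same entropy, so taking the common upstairs map $f = p$ in the definition of $\approx$ yields $q \approx \Lambda(p)$, and hence $\tLambda(p) = \Psi(\Lambda(p)) = \Psi(q)$.

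For continuity, suppose $p_n \to p$ in $\bPc_\Ic$. Continuity of $\htop$ on polynomial families guarantees that the slopes $e^{\htop(p_n)}$ of $\Lambda(p_n)$ converge, so the parameters $(a_k, b_k^i)$ defining $\Lambda(p_n) \in \bLc_\Ic$ stay in a compact subset of Euclidean space. Any subsequence thus has a further subsequence with $\Lambda(p_{n_k}) \to q' \in \bLc_\Ic$ and $\htop(q') = \htop(p)$. By continuity of the quotient projection $\Psi$, it suffices to prove $q' \sim \Lambda(p)$; I would establish this by exhibiting $p$ itself as a common upstairs map, so that $q' \approx \Lambda(p)$ and therefore every subsequential limit of $\tLambda(p_n)$ equals $\tLambda(p)$.

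Producing the required semi-conjugacy from $p$ to $q'$ is where the main work lies. Let $H_n$ denote the Milnor--Thurston semi-conjugacies, so that $H_n \circ p_n = \Lambda(p_n) \circ H_n$; each $H_n$ is continuous and monotone on every $I_k$. Helly's selection theorem provides a further subsequence along which $H_{n_k}$ converges pointwise to a monotone $H : I \to I$, and passing to the limit at continuity points of $H$ yields $H \circ p = q' \circ H$ at all but countably many points. The main obstacle is that $H$ may fail to be strictly continuous: jumps of $H$ must correspond either to wandering intervals or non-essential basins of $p$ (which any constant-slope piecewise linear model necessarily collapses) or to colliding turning points of $q'$. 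In these cases one either regularizes $H$ one-sidedly to obtain a bona fide semi-conjugacy onto $q'$, or, if that fails, inserts an intermediate map $q'' \in \bLc_\Ic$ with $q' \approx q'' \approx \Lambda(p)$, exploiting the fact that $\sim$ is the transitive closure of $\approx$. Either route forces $\Psi(q') = \Psi(\Lambda(p))$ and completes the proof of continuity.
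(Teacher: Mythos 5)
Your overall architecture is the same as the paper's: realize the combinatorics of a given $q$ by a polynomial for surjectivity, and for continuity use compactness of the parameters to extract a subsequential limit $q'$ of $\Lambda(p_n)$, continuity of $\htop$, and then exhibit the limit polynomial $p$ itself as the common upstairs map in the definition of $\approx$, so that $q'\approx\Lambda(p)$ and $\Psi(q')=\Psi(\Lambda(p))$. The surjectivity half is fine at the paper's own level of rigor. The problem is in the continuity half, exactly at the point you flag and then leave unresolved: you must produce a genuine semi-conjugacy from $p$ onto $q'$, i.e.\ you must rule out jumps of the Helly limit $H$ of the semi-conjugacies $H_n$, and neither of your two escape routes works as stated. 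First, your diagnosis of where jumps come from conflates two different phenomena: wandering intervals and non-essential basins of $p$ produce \emph{plateaus} of $H$ (intervals collapsed to points), which are harmless for a semi-conjugacy; a \emph{jump} of $H$ at $x_0$ means the single point $x_0$ corresponds to a nondegenerate interval downstairs, and this cannot be cured by a one-sided regularization — the regularized map either misses the gap in its image or violates $H\circ p=q'\circ H$ at $x_0$, so it is not a semi-conjugacy onto $q'$. Second, ``inserting an intermediate $q''$ with $q'\approx q''\approx\Lambda(p)$'' is not available for free: to invoke $\approx$ you must construct a common upstairs map of the right entropy semi-conjugate to both maps, which is precisely the difficulty you are trying to avoid.

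The paper closes exactly this gap with a dynamical argument (its Lemma~\ref{lm:cont}). It defines $H_0^\pm$ via $\liminf/\limsup$ of the $H_n$, sets $A(x)=[H_0^-(x),H_0^+(x)]$, and shows $q'(A(x))\subset A(p(x))$. If some $A(x_0)$ were nondegenerate, then, because $q'$ is expanding with finitely many turning points, some iterate maps $A(x_0)$ into itself and one may assume $A(x_0)$ contains a turning point; hence $x_0$ is a periodic critical point of the polynomial $p$, so a superattractor. For large $n$ the maps $p_n$ then have an attracting cycle whose basin contains a neighbourhood $U$ of $x_0$ of size independent of $n$, and since any semi-conjugacy onto an expanding piecewise linear map collapses basins of attraction, every $H_n$ is constant on $U$, forcing $A(x_0)$ to be degenerate — a contradiction. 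This shows the limit $H$ is continuous, so no regularization and no intermediate map are needed. Without this argument (or an equivalent one) your continuity proof is incomplete, since it is exactly the case of a periodic turning point of $q'$ — not the wandering-interval or basin-collapsing cases — that threatens to create a jump.
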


To proof this theorem we need the following lemma first.

\begin{lemma} \label{lm:cont}
  Let $f_i \in \Pc_\Ic$ be a sequence converging to $f_0 \in
  \Pc_\Ic$, the sequence $q_i \in \Lc_\Ic$ converge to $q_0 \in
  \Lc_\Ic$ such that $f_i$ is semi-conjugate to $q_i$ for all $i$.
  Then $f_0$ is semi-conjugate to $q_0$.
\end{lemma}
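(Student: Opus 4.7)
The plan is to construct $H_0$ as a subsequential limit of the semi-conjugacies $H_i : I \to I$ satisfying $H_i \circ f_i = q_i \circ H_i$, $H_i(I_k) = I_k$, with $H_i$ continuous and monotone. Because each restriction $H_i|_{I_k}$ is a monotone map of $I_k$ into itself, Helly's selection theorem yields, after passing to a subsequence (still denoted by $H_i$), a pointwise limit $H_0 : I \to I$ which is automatically monotone with $H_0(I_k) \subseteq I_k$. The convergence $q_i \to q_0$ in $\Lc_\Ic$ forces the entropies $h_i := \htop(q_i)$ to converge to $h := \htop(q_0) > 0$, hence the slopes $\pm e^{h_i}$ of $q_i$ converge to $\pm e^h$. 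Passing to the limit in $H_i(f_i(x)) = q_i(H_i(x))$ at every point $x$ of continuity of $H_0$ immediately gives $H_0 \circ f_0 = q_0 \circ H_0$.

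The crux will be to show that $H_0$ has no jumps. Suppose, towards a contradiction, that $H_0$ jumps by $\delta > 0$ at some point $x_0$, so that $[a,b] := [H_0(x_0^-), H_0(x_0^+)]$ is nondegenerate. Taking one-sided limits in the functional equation at $x_0$ and using continuity of $f_0$ and $q_0$, one sees that $H_0$ has a jump at $f_0(x_0)$ of size $|q_0(b) - q_0(a)|$ (assuming $f_0$ is monotone at $x_0$; otherwise I would pass to a nearby preimage of $x_0$ at which $f_0$ is monotone). Because $q_0$ has constant slope magnitude $e^h > 1$, this jump equals $e^h \delta$ unless $(a,b)$ contains a turning point $c$ of $q_0$, in which case it equals $e^h|a+b-2c|$. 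Iterating along the forward orbit of $x_0$ under $f_0$, and using that $q_0$ has only finitely many turning points together with the absence of wandering intervals for the polynomial $f_0 \in \Pc_\Ic$, one argues that the jumps along the orbit cannot all be damped indefinitely. This contradicts the bound $\sum_x (H_0(x^+)-H_0(x^-)) \le |I|$ valid for any monotone $H_0 : I \to I$. This propagation-of-jumps argument is the main technical obstacle; the delicate point is to control the precise coincidences with turning points of $q_0$.

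Once $H_0$ is established to be continuous, the remaining conditions for semi-conjugacy follow routinely: surjectivity $H_0(I_k) = I_k$ holds because for any $y \in I_k$ one can choose $x_i$ with $H_i(x_i) = y$, extract a convergent subsequence $x_i \to x$, and use monotonicity together with continuity of $H_0$ to conclude $H_0(x) = y$; and $H_0$ sends critical points of $f_0$ to critical points of $q_0$ of the same order because $H_i$ does, and the critical points depend continuously on the defining data in $\Pc_\Ic$ (as polynomial roots of the derivative) and in $\Lc_\Ic$ (as intersection points of linear branches). This yields the required semi-conjugacy between $f_0$ and $q_0$.
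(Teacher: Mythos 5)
Your construction of $H_0$ (Helly limit of the $H_i$, passing to the limit in $H_i\circ f_i=q_i\circ H_i$ at continuity points, and the routine endgame) matches the paper's strategy, which uses the functions $H_0^\pm(x)=\liminf/\limsup H_i(x_i)$ instead of a subsequence. But the step you yourself flag as ``the main technical obstacle'' is exactly where the real content of the lemma sits, and your sketch of it does not close. The propagation-of-jumps bookkeeping (jumps expand by $e^h$ unless the jump interval meets a turning point of $q_0$) fails precisely in the dangerous case: the jump interval $A(x_0)$ may contain a turning point and satisfy $q_0^n(A(x_0))\subset A(x_0)$ for some $n$, i.e.\ the orbit of the jump gets trapped in a cycle of intervals and is damped forever, so there is no conflict with the bound $\sum_x\bigl(H_0(x^+)-H_0(x^-)\bigr)\le |I|$. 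Moreover, ``absence of wandering intervals for $f_0$'' is not applicable here: the nondegenerate interval lives in the target ($q_0$-) space, while in the domain the obstruction is a single point $x_0$ at which $H_0$ jumps, so no wandering interval of $f_0$ is produced.

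What is actually needed at this point is a different dynamical input, and it is the heart of the paper's proof. In the trapped case, $x_0$ is a periodic critical point of $f_0$, hence a superattractor; therefore for all large $i$ the maps $f_i$ have attracting periodic orbits whose basins contain a neighbourhood $U$ of $x_0$ of definite size independent of $i$. Since any semi-conjugacy from a $C^1$ map onto an expanding piecewise-linear map must collapse basins of attraction, each $H_i$ is constant on $U$ for large $i$, hence so is the limit, contradicting the assumed jump at $x_0$. Without this (or an equivalent) argument your proof rules out only the non-recurrent jump configurations, so as it stands the proposal has a genuine gap; the rest of your outline (continuity of slopes, surjectivity, critical points to turning points of the same order) is fine modulo routine care with colliding critical/turning points in the limit.
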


\begin{proof}
  Let $H_i$ denote the semi-conjugacy between $f_i$ and $q_i$.
  Let us define two function $H_0^-$ and $H_0^+$ by
  \begin{eqnarray*}
    H_0^-(x) &=& \inf_{\{x_i\}:\, x_i \to x} \liminf H_i(x_i),\\
    H_0^+(x) &=& \sup_{\{x_i\}:\, x_i \to x} \limsup H_i(x_i)
  \end{eqnarray*}
  for $x \in I$. In other words, $[H_0^-(x), H_0^+(x)]$
  is the minimal interval containing all limit points of $H_i(x_i)$
  for all sequences $x_i \to x$. From the definition it is clear that $H_0^-(x) \le
  H_0^+(x)$ and since the maps $H_i$ are non-strictly monotone
  increasing, for all $x_1 < x_2$ we have $H_0^+(x_1) \le
  H_0^-(x_2)$. In particular, $H_0^\pm$ are non-strictly monotone
  increasing too.

  It is easy to see that from the definition of $H_0^\pm$ it follows
  that $\liminf H_0^-(x_i) \ge H_0^-(x)$ and $\limsup H_0^+(x_i) \le
  H_0^+(x)$ when $x_i \to x$. Indeed, given $x$ for any $\epsilon >0$
  there exists $\delta >0$ and $N$ such that for all $y \in (x-\delta,
  x+\delta)$ and all $i>N$ one has
  \begin{equation}
    H_i(y)> H_0^-(x) - \epsilon.\label{eq:2}
  \end{equation}
  If this were not true, then there would exist $\epsilon > 0$ and a
  sequences $y_k \to x$ and $i_k \to \infty$ such that
  $H_{i_k}(y_k) \le H_0^-(x) - \epsilon$ and taking the limit we would
  obtain a contradiction with the definition of $H_0^-(x)$. Then
  inequality~(\ref{eq:2}) implies that $H_0^-(y) > H_0^-(x) - \epsilon$
  for all $y\in (x-\delta, x+\delta)$ and we are done.

  Notice that the last property of $H_0^\pm$ implies that if $H_0^-(x)
  = H_0^+(x)$, then the functions $H_0^\pm$ are continuous at $x$.

  Define $A(x):=[H_0^-(x),H_0^+(x)]$. We claim that $q_0(A(x)) \subset
  A(f_0(x))$. Indeed, fix $x_0\in I$ and $y_0\in A(x_0)$ and find $x_i$ such that
  $H_i(x_i)=y_0$ and $x_i \to x_0$. Then, since $H_i$ is a semi-conjugacy
  we have $q_i(y_0) = H_i(f_i(x_i))$. Clearly, $q_i(y_0)$ converges to
  $q(y_0)$, $f_i(x_i)$ converges to $f_0(x_0)$, and the set of limit
  points of the sequence $H_i(f_i(x_i))$ belongs to $A(f_0(x_0))$ because of the
  definition of $H_0^\pm$.

  Suppose that $A(x_0)$ is a non-degenerate interval for some $x_0$.
  The orbit of the interval $A(x_0)$ under the map $q_0$ cannot be
  disconnected because $q_0$ is expanding and has only finitely many
  turning points. So, without loss of generality we can assume that
  $A(x_0)$ contains a turning point and there exists $n>0$ such that
  $q_0^n(A(x_0)) \subset A(x_0)$ . This implies that $x_0$ is a
  periodic critical point of $f_0$ and, therefore, it is a
  superattractor. The corresponding critical points of maps $f_i$,
  where $i$ is sufficiently large will be contained in a basin of
  attraction of a periodic attractor and this basin will contain a
  definite neighbourhood $U$ of $x_0$ which does not depend on $i$
  when $i$ is sufficiently large. Every semi-conjugacy between a $C^1$
  map and piece-wise linear expanding maps must collapse basins of
  attraction, so all maps $H_i$ are constants on $U$ for $i$
  sufficiently large. This implies that $H_0^\pm$ are also constant on
  $U$, so $H_0^-(x_0)=H_0^+(x_0)$ which contradicts the fact that
  $A(x_0)$ is non-degenerate.

  Therefore, we have proved that
  $H_0^-(x)=H_0^+(x)$ for all $x$ and the sequence $H_i$ converges to a
  continuous non-strictly monotone increasing map $H_0$ which is a
  semi-conjugacy between $f_0$ and $q_0$.
\end{proof}

We can proceed with the proof of the theorem now.

\begin{proof}[Proof of Theorem~\ref{thr:continuous}]
  The surjectivity of $\tLambda$ follows from the fact that
  every combinatorics of a piece-wise linear map can be realised by a
  polynomial and the fact that the combinatorially equivalent maps in
  $\bLc_\Ic$ are similar.

  The continuity of $\tLambda$ is a consequence of the above
  lemma. Indeed, take a sequence $f_i \in \Pc_\Ic$ converging to $f_0$
  as in the lemma and let $q_i=\Lambda(f_i)$. Assume $q_i$
  converges to $q_0$.
  From Lemma~\ref{lm:cont} we know that $f_0$ is
  semi-conjugate to $q_0$.  By continuity of the topological entropy
  we know that $\htop(f_0)=\lim \htop(f_i)=\lim
  \htop(q_i)=\htop(q_0)$. The map $\Lambda(f_0)$ is semi-conjugate to
  $f_0$ and has the same topological entropy, hence $q_0 \approx
  \Lambda(f_0)$. Thus, for any sequence $f_i$ converging to $f_0$ we
  have that $\tLambda(f_i)$ converges to $\tLambda(f_0)$.
\end{proof}

Now consider some $f_0\in \Pc_\Ic$ and let $q = \Lambda(f_0)$. As we
already know (by Lemma~\ref{lm:gamma1}(\ref{item:gamma1-3})) for every $\upsilon \in
\tPc_{\Ic_{q}}$ there exists a map $f \in \Pc_\Ic$ which is
semi-conjugate to $q$ and such that $\tGamma_q(f) = \upsilon$. If $\htop(\upsilon) <
\htop(q)$, it is easy to check that in this case $\Lambda(f) = q$. If
$\htop(\upsilon) > \htop(q)$, then $\htop(f) > \htop(q)$ and $\Lambda(f)$
cannot be equal to $q$. The case $\htop(\upsilon) = \htop(q)$ is more
subtle and it is not clear whether $\Lambda(f)$ is $q$ or
not. However, the next lemma shows that $\Lambda(f)$ and $q$ are
similar.

\begin{lemma}\label{lm:4.2}
  For any $q\in \bLc_\Ic$
  $$
  \tGamma_{q}^{-1}(\btPc_{\Ic_q}(\le \htop(q)))
  \,\subset\,
  \bigcup_{q'\approx q}  \Lambda^{-1}(q').
  $$
\end{lemma}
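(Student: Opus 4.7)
Take any $p\in \tGamma_q^{-1}(\btPc_{\Ic_q}(\le \htop(q)))$. By definition $p\in\bPc_\Ic$ is semi-conjugate to $q$ via some monotone continuous $H:I\to I$, and $\tGamma_q(p)$ has topological entropy at most $\htop(q)$. Since partial conjugacies preserve entropy and $\Gamma_q(p)$ is merely the restriction $p|_{\RDom(p,q)}$ repackaged as an element of $\bMc_{\Ic_q}^\infty$, the hypothesis becomes $\htop(p|_{\RDom(p,q)})\le\htop(q)$. The plan is to set $q':=\Lambda(p)$ and verify $q'\approx q$, which will immediately give $p\in\Lambda^{-1}(q')\subset\bigcup_{q''\approx q}\Lambda^{-1}(q'')$.

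The crux is the identity $\htop(p)=\max\bigl(\htop(q),\,\htop(p|_{\RDom(p,q)})\bigr)$. The inequality $\htop(p)\ge\htop(q)$ is automatic since $H$ is a factor map. For the reverse, I would use the Milnor--Thurston lap-number formula $\htop(p)=\lim_n\tfrac{1}{n}\log l(p^n)$ together with the fibre structure of $H$. Because $p$ is a polynomial, no fibre of $H$ is a wandering interval; the only non-degenerate fibres potentially carrying positive entropy are the restrictive intervals comprising $\RDom(p,q)$, whereas attracting basins of $p$ that are not contained in $\RDom(p,q)$ are collapsed by $H$ but contribute no lap growth. Using that $\RDom(p,q)$ is forward invariant, one splits the laps of $p^n$ into those whose orbit stays entirely outside $\RDom(p,q)$, which project bijectively via $H$ onto laps of $q^n$, and those that eventually fall into $\RDom(p,q)$ and are thereafter trapped in the orbit of some restrictive interval $J$ of period $n_J$, whose contribution grows at the rate $\htop(p^{n_J}|_J)/n_J\le\htop(p|_{\RDom(p,q)})$. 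Summing the two contributions yields the claimed identity, and combined with the hypothesis gives $\htop(p)\le\htop(q)$, hence $\htop(p)=\htop(q)$.

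Once $\htop(p)=\htop(q)$ is established the conclusion is essentially formal. Setting $q':=\Lambda(p)\in\bLc_\Ic$, the defining properties of $\Lambda$ provide a semi-conjugacy $p\to q'$ together with $\htop(q')=\htop(p)=\htop(q)$. Since $p\in\bPc_\Ic\subset\bMc_\Ic^1$ is also semi-conjugate to $q$ with matching entropy, the map $p$ itself witnesses $q'\approx q$, so $p\in\Lambda^{-1}(q')\subset\bigcup_{q''\approx q}\Lambda^{-1}(q'')$. The main obstacle is the entropy decomposition $\htop(p)=\max(\htop(q),\htop(p|_{\RDom(p,q)}))$: this upgrades the inequality already noted in Section~\ref{sec:link-between-mc_ic} (valid when $q=\Lambda(p)$) to an arbitrary semi-conjugate $q$ and, while standard in spirit for piecewise monotone interval maps, it requires careful use of the absence of wandering intervals for polynomials and of the triviality of lap growth on collapsed basins.
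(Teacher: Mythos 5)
Your proposal is correct and follows essentially the same route as the paper: take $p$ in the left-hand set, note it is semi-conjugate to $q$ with $\htop(p)=\htop(q)$, set $q'=\Lambda(p)$, and use $p$ itself as the witness that $q'\approx q$, so $p\in\Lambda^{-1}(q')$. The only difference is that the entropy equality, which the paper's proof simply declares clear, is the step you expand with the lap-number/fibre decomposition $\htop(p)=\max(\htop(q),\htop(p|_{\RDom(p,q)}))$ — a filling-in of the same argument rather than a different approach.
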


\begin{proof}
  Take $f \in \tGamma_{q}^{-1}(\btPc_{\Ic_q}(\le
  \htop(q)))$. By the definition of $\Gamma_q$ we know that $f$ and $q$
  are semi-conjugate. It is also clear that $\htop(f)=\htop(q)$. Let
  $q'=\Lambda(f)$. Again, by the definition of $\Lambda$, $f$ and $q'$
  are semi-conjugate and have the same topological entropy. Thus,
  $q\approx q'$ and we are done.
\end{proof}

This lemma implies that for any $\tilde q \in \btLc_\Ic$
\begin{eqnarray}
  \label{eq:1}
   \tLambda^{-1}(\tilde q)
   &=&
   \bigcup_{\Psi(q)=\tilde q}
   \tGamma_q^{-1}(\btPc_{\Ic_q}(\le
   \htop(\tilde q))).
\end{eqnarray}
It is straightforward that the left hand side of equality~(\ref{eq:1})
is a subset of the right hand side. The opposite inclusion follows
directly from Lemma~\ref{lm:4.2}.

\section{Proof of the main result (Theorem~\ref{th:A})}
\label{sec:proof-main-result}

In this section we prove that for every $h\ge 0$ the set $\bPc_\Ic(=h)$
is connected. We will do it by induction with respect to the total
number of critical points $|l|$ and $N$.

If $|l|=0$, then the topological entropy of every map in $\bPc_\Ic$ is
zero and we have nothing to do.

Assume that for any $h\ge 0$ the set $\bPc_{\Ic'}(=h)$ is
connected where $\Ic'=\{N',\sigma',l',\sign',\bnd'\}$ with
$|l'|\le L-1$.

Fix some $h_0 >0$. Take $\Ic=\{N,\sigma,l,\sign,\bnd\}$ where $|l|=L$,
$\sigma$ is cyclic, and $N>\log(2) \frac L{h_0}$. In this case every
$f\in \bPc_\Ic$ has topological entropy less than $h_0$ and, therefore,
$\bPc_\Ic(=h)=\emptyset$, $\bPc_\Ic(\le h)=\bPc_\Ic$ are connected
sets for $h \ge h_0$.

Now assume that for any $h\ge h_0$ the set $\bPc_{\Ic'}(=h)$ is
connected where $\Ic'=\{N',\sigma',l',\sign',\bnd'\}$ with either $|l'|\le
L-1$ or $|l'|=L$, $\sigma'$ is cyclic and $N'\ge N+1$.  This is our
induction assumption. At this stage the induction will be done with
respect to $N$ backwards.

The space $\bPc_{\Ic'}$ is connected and the topological entropy
continuously on maps in $\bPc_{\Ic'}$. This implies that since
$\bPc_{\Ic'}(=h)$ is connected, then $\bPc_{\Ic'}(\le h)$ is connected
as well.

If the primary decomposition of the space $\bPc_\Ic$ is
$$
\bPc_\Ic = \bPc_{\Ic_1}\times \cdots \times \bPc_{\Ic_m}
$$
and we know that for any $h\ge h_0$ and $i=1,\ldots,m$ the sets
$\bPc_{\Ic_i}(=h)$ and $\bPc_{\Ic_i}(\le h)$ are connected, then the
sets $\bPc_{\Ic}(=h)$ and $\bPc_{\Ic}(\le h)$ are connected as
well. This implies that we can assume that the sets $\bPc_{\Ic'}(=h)$
and $\bPc_{\Ic'}(\le h)$ are connected if $\Ic'$ is subordinate to
$\Ic=\{N,\sigma,l,\sign,\bnd\}$, where $|l|=L$ and $\sigma$ is cyclic.

Fix cyclic $\Ic=\{N,\sigma,l,\sign,\bnd\}$ and take $\tilde q \in
\btLc_\Ic$ with $\htop(\tilde q)=h\ge h_0$.  Take some $q \in
\Psi^{-1}(\tilde q)$. Due to
Lemma~\ref{lm:gamma1}(\ref{item:gamma1-5}) we know that $\Ic_q$ is
subordinate to $\Ic$, and then from the induction assumption it
follows that $\bPc_{\Ic_q}(\le h)$ is connected. The map $\Upsilon :
\bPc_{\Ic_q} \to \btPc_{\Ic_q}$ is continuous, surjective and
preserves topological entropy, therefore $\btPc_{\Ic_q}(\le
h)=\Upsilon(\bPc_{\Ic_q}(\le h))$ is connected.

The map $\tGamma_q : \shp_\Ic(q) \to \btPc_{\Ic_q}$ is
continuous, surjective and monotone (because of
Lemma~\ref{lm:gamma1}(\ref{item:gamma1-4})), therefore the set
$\tGamma_q^{-1} (\btPc_{\Ic_q}(\le h))$ is
connected due to Lemma~\ref{lm:top}.

Take two similar maps $q_1,q_2 \in \Psi^{-1}(\tilde q)$, so
$q_1\approx q_2$. By definition there exists $p\in \bPc_\Ic$ which has
the same entropy as $q_1$ and $q_2$ and which is semi-conjugate to
$q_1$ and $q_2$. This implies that $p \in \tGamma_{q_i}^{-1}
(\btPc_{\Ic_{q_i}}(\le h))$, where $i=1,2$, and therefore the set
$$
\tGamma_{q_1}^{-1} (\btPc_{\Ic_{q_1}}(\le h))
\,\bigcup\,
\tGamma_{q_2}^{-1} (\btPc_{\Ic_{q_2}}(\le h))
$$
is connected. Using equality~(\ref{eq:1}) we get that the set 
$$
\tLambda^{-1}(\tilde q)
=
\bigcup_{\Psi(q)=\tilde q}
\tGamma_q^{-1}(\btPc_{\Ic_q}(\le h))
$$
is connected as well.

The set $\bLc_\Ic(=h)$ is connected, so is the set $\btLc_\Ic(=h)$. The
map $\tLambda : \bPc_\Ic \to \btLc_\Ic$ is continuous,
surjective and monotone as we just have proved. Thus, due to Lemma~\ref{lm:top}
$\bPc_\Ic(=h) = \tLambda^{-1}(\btLc_\Ic(=h))$ is
connected and we are done.

Finally, notice that the same argument proves that the set $\Pc_\Ic(\le h)$
is connected for any $h>0$.  Then the set
 $$\Pc_\Ic(=0) = \bigcap_{h>0} \Pc_\Ic(\le h)$$
is connected as an intersection of compact connected nested sets.

\section{Case of non-degenerate maps (proof of Theorem~\ref{th:B})}
\label{sec:case-non-degenerate}

In this section we will modify the proof of Theorem~\ref{th:A} given
in the previous section and prove Theorem~\ref{th:B}.

First, for given $\epsilon >0$ let us define the space $\eLc_\Ic
\subset \Lc_\Ic$ as the set of all maps $q \in \Lc_\Ic$ such that
the distance between any turning points and the distance from the
turning points to the
boundaries of $I_k$ are greater or equal than $\epsilon$. This
space is closed and the set $\eLc_\Ic(=h)$ is connected. Indeed, to
describe the set of parameters of $\eLc_\Ic(=h)$ we have to solve
inequalities similar to (\ref{ieq:1}) and (\ref{ieq:2}). More
precisely, the inequality (\ref{ieq:2}) stays the same and
(\ref{ieq:1}) should be replaced by
\begin{equation} \label{ieq:3}
  a_0+\epsilon \le c_1^1\le c_1^1+\epsilon \le c_1^2\le \cdots \le
  c_1^{l(1)-1}+\epsilon \le c_1^{l(1)}\le a_1-\epsilon \le
  a_1+\epsilon \le
  c_2^1 \le \cdots \le a_N-\epsilon
\end{equation}
Again $\eLc_\Ic(=h)$ is described by linear inequalities in $\R^D$, and
it is connected as an intersection of finitely many connected convex
subsets of $\R^D$. 

Next we define the space $\teLc_\Ic$ in the exactly same way as we did
in Section~\ref{sec:continuity-lambda}. More precisely, $\teLc_\Ic$ is
a subset of $\tLc_\Ic$ such that each equivalence class in $\teLc_\Ic$
contains an element of $\eLc_\Ic$. Since $\eLc_\Ic$ is compact, the
space $\teLc_\Ic$ is compact too.  We can also define the space
$\ePc_\Ic \subset \oPc_\Ic$ by setting it to be equal to
$\tLambda^{-1}(\teLc_\Ic)$. Since $\tLambda$ is continuous, $\ePc_\Ic$
is compact.  Using the same prove as in the previous section without
any alterations one can show that the set $\ePc_\Ic(=h)$ is connected.

For any $\epsilon_2 > \epsilon_1 >0$ it is clear that
$\bPc^{\epsilon_2}_\Ic \subset \bPc^{\epsilon_1}_\Ic$ and
$\bPc^{\epsilon_2}_\Ic(=h) \subset \bPc^{\epsilon_1}(=h)$. Since
$\ePc_\Ic(=h)$ are connected we get that the union
$ \cup_{\epsilon>0} \ePc_\Ic(=h)$ is connected as well.

Let us denote $\cup_{\epsilon >0} \ePc_\Ic$ by $\pPc_\Ic$. This set is
a subset of $\oPc_\Ic$ but does not coincide with it. Let us see the
structure of $\oPc_\Ic \setminus \pPc_\Ic$.

There are
polynomials with all critical points non-degenerate, but which are
semi-conjugate to a piece-wise linear maps with collided turning
points.
Take $p_0\in \oPc_\Ic(=h) \setminus \pPc_\Ic$ and let
$q_0 = \Lambda(p_0)$. The map $q_0$ cannot have all its turning points
distinct because otherwise $q_0$  would belong to $\eLc_\Ic$ for
some $\epsilon>0$ and $p_0$ would belong to $\pPc_\Ic$. Hence, $q_0$
must have some collided turning points and it
belongs to the boundary of $\Lc_\Ic$. Moreover, these collided turning
points of $q$ must be periodic, otherwise $p_0$ would have a degenerate
critical point.
Recall that $\shp_\Ic(q_0)$ denotes all polynomials in $\Pc_\Ic$ which
are semi-conjugate to $q_0$. Thus $p_0 \in \shp_\Ic(q_0)\cap
\oPc_\Ic(=h)$ and we have the following decomposition formula for $\oPc_\Ic(h)$:
\begin{equation} \label{eq:op}
\oPc_\Ic(=h) \,=\, \pPc_\Ic(=h)\, \bigcup \,\left(
 \cup_{q\in \Lc_\Ic^\partial(=h)}
 \left( \shp_\Ic(q)\cap \oPc_\Ic(=h) \right)\right),\label{eq:3}
\end{equation}
where $\Lc_\Ic^\partial$ denotes all maps in $\Lc_\Ic$ which have
collided periodic turning points.

We now going to finish the proof that $\oPc_\Ic(=h)$ is connected.
This will be done by induction similar to one in the previous section.
We again fix $h_0>0$ and $\Ic$, and assume that $\oPc_{\Ic'}(=h)$ and
$\oPc_{\Ic'}(\le h)$ are connected for all $h\ge h_0$ and all $\Ic'$
subordinate to $\Ic$.

Fix a map $q_0$ as in a paragraph above, i.e. $q_0 = \Lambda (p_0)$
where $p_0 \in \oPc_\Ic(=h)$. The map $q_0$ has some periodic collided
turning points. Denote one of these turning points by $t$, its period
denote by $n$, and suppose that the number of turning points collided
at $t$ is $k_t$.  To simplify the exposition we will assume that $q_0$
is increasing at $t$, all other turning points of $q_0$ are simple
(i.e. all colliding turning points are concentrated at $t$) and
non-periodic (in particular, the orbit of $t$ does not contain other
turning points). The arguments below are quite general and these
restrictions can be easily dropped.

By the definition we know that a
partial conjugacy maps critical points onto critical points and it
preserves the order of critical points. This implies that if a
polynomial in a given partial
conjugacy class has all critical points non-degenerate, then all other
polynomials from this partial conjugacy class have all critical points
non-degenerate too. Thus, the set $\tPc_{\Ic'}^0 = \Upsilon (\oPc_{\Ic'})$ is
well defined and by the induction assumption the sets
$\tPc_{\Ic'}^0(=h)$ and $\tPc_{\Ic'}^0(\le h)$ are connected for
all $\Ic'$ subordinate to $\Ic$.

We already know that the map $\tGamma_{q_0} : \shp_\Ic(q_0) \to
\btPc_{\Ic_{q_0}}$ is surjective, monotone and continuous.
As in the previous section we can argue
that $\Ic_{q_0}$ is subordinate to $\Ic$, and using 
 Lemma~\ref{lm:top} for this map
 we obtain that the set
$\tGamma_{q_0}^{-1} (\tPc_{\Ic_{q_0}}^0(\le h)) =\shp_\Ic(q_0)\cap
\oPc_\Ic(=h)$ is connected.
Notice that the topological entropy of maps in $\shp_\Ic(q_0)$ is at
least $h=\htop(q_0)$, so the sets $\shp_\Ic(q_0)\cap
\oPc_\Ic(=h)$ and $\shp_\Ic(q_0)\cap
\oPc_\Ic(\le h)$ coincide.

Now we are going to construct a sequence of maps $q_i\in \Lc_\Ic$ converging to
$q_0$ which satisfies the following properties:
\begin{enumerate}
\item All turning points of $q_i$ are distinct and
  non-periodic. Because of the polynomial rigidity this implies that
  for any $q_i$ there exists unique $f_i\in \oPc_\Ic$ which is
  semi-conjugate to $q_i$.
\item The dynamics of turning points of $q_i$ will prevent the
  critical points of $f_i$ to collapse in the limit. It will be
  clearer later what this means precisely.
\end{enumerate}

\begin{figure}[t]
  \centering
  \setlength{\unitlength}{3315sp}%
\begingroup\makeatletter\ifx\SetFigFont\undefined%
\gdef\SetFigFont#1#2#3#4#5{%
  \reset@font\fontsize{#1}{#2pt}%
  \fontfamily{#3}\fontseries{#4}\fontshape{#5}%
  \selectfont}%
\fi\endgroup%
\begin{picture}(8033,4365)(169,-3694)
{\color[rgb]{0,0,0}\thinlines
\put(2071,-1771){\circle*{50}}
}%
{\color[rgb]{0,0,0}\put(1261,-3661){\circle*{50}}
}%
{\color[rgb]{0,0,0}\put(3151,-3661){\circle*{50}}
}%
{\color[rgb]{0,0,0}\put(181,-3661){\framebox(4320,4320){}}
}%
{\color[rgb]{0,0,0}\put(181,-3661){\line( 1, 1){4320}}
}%
\thicklines
{\color[rgb]{0,0,0}\put(3151,389){\line( 1,-2){810}}
}%
{\color[rgb]{0,0,0}\put(541,-1951){\line(-1,-2){270}}
}%
{\color[rgb]{0,0,0}\put(3961,-1231){\line( 1, 2){360}}
}%
\thinlines
{\color[rgb]{0,0,0}\multiput(5400,-2852)(9.00000,0.00000){311}{\makebox(1.9844,13.8906){\tiny.}}
\multiput(5400,-62)(9.00000,0.00000){311}{\makebox(1.9844,13.8906){\tiny.}}
\multiput(5400,-2852)(0.00000,9.00000){311}{\makebox(1.9844,13.8906){\tiny.}}
\multiput(8190,-2852)(0.00000,9.00000){311}{\makebox(1.9844,13.8906){\tiny.}}
}%
{\color[rgb]{0,0,0}\multiput(5400,-2852)(6.36986,6.36986){439}{\makebox(1.9844,13.8906){\tiny.}}
}%
{\color[rgb]{0,0,0}\put(5400,-2852){\line( 1, 2){540}}
}%
{\color[rgb]{0,0,0}\put(8190,-62){\line(-1,-2){360}}
\put(7830,-782){\line(-1, 2){270}}
\put(7560,-242){\line(-1,-2){720}}
\multiput(6840,-1682)(-4.50000,9.00000){41}{\makebox(1.9844,13.8906){\tiny.}}
\put(6660,-1322){\line(-1,-2){450}}
\put(6210,-2222){\line(-3, 5){270}}
}%
{\color[rgb]{0,0,0}\put(3458,-249){\line(-1,-2){408.800}}
\multiput(3052,-1068)(-4.44681,8.89362){48}{\makebox(1.9844,13.8906){\tiny.}}
\put(2843,-650){\line(-1,-2){557.400}}
\multiput(2286,-1765)(-4.50323,9.00645){32}{\makebox(1.9844,13.8906){\tiny.}}
\put(2146,-1486){\line(-1,-2){348.400}}
\multiput(1798,-2183)(-5.22132,8.70220){41}{\makebox(1.9844,13.8906){\tiny.}}
}%
{\color[rgb]{0,0,0}\put(1041,-2938){\line( 1, 2){550.800}}
}%
{\color[rgb]{0,0,0}\put(3218,253){\line(-1,-2){947}}
\multiput(2271,-1641)(-4.74545,9.49091){12}{\makebox(1.9844,13.8906){\tiny.}}
\multiput(2220,-1536)(-4.50968,-9.01935){32}{\makebox(1.9844,13.8906){\tiny.}}
\multiput(2079,-1815)(-4.88571,9.77143){8}{\makebox(1.9844,13.8906){\tiny.}}
\multiput(2046,-1746)(-4.58947,-9.17895){20}{\makebox(1.9844,13.8906){\tiny.}}
\multiput(1958,-1920)(-5.28529,8.80882){11}{\makebox(1.9844,13.8906){\tiny.}}
}%
{\color[rgb]{0,0,0}\put(1189,-3257){\line( 1, 2){713.200}}
}%
{\color[rgb]{0,0,0}\multiput(1801,-2041)(9.00000,0.00000){61}{\makebox(1.9844,13.8906){\tiny.}}
\multiput(1801,-1501)(9.00000,0.00000){61}{\makebox(1.9844,13.8906){\tiny.}}
\multiput(1801,-2041)(0.00000,9.00000){61}{\makebox(1.9844,13.8906){\tiny.}}
\multiput(2341,-2041)(0.00000,9.00000){61}{\makebox(1.9844,13.8906){\tiny.}}
}%
\thicklines
{\color[rgb]{0,0,0}\put(1261,-3391){\line(-1, 2){720}}
}%
\thinlines
{\color[rgb]{0,0,0}\multiput(1171,-2671)(9.00000,0.00000){241}{\makebox(1.9844,13.8906){\tiny.}}
\multiput(1171,-511)(9.00000,0.00000){241}{\makebox(1.9844,13.8906){\tiny.}}
\multiput(1171,-2671)(0.00000,9.00000){241}{\makebox(1.9844,13.8906){\tiny.}}
\multiput(3331,-2671)(0.00000,9.00000){241}{\makebox(1.9844,13.8906){\tiny.}}
}%
{\color[rgb]{0,0,0}\put(3511,-2491){\vector(-2, 1){1440}}
}%
\thicklines
{\color[rgb]{0,0,0}\put(1261,-3391){\line( 1, 2){1890}}
}%
\thinlines
{\color[rgb]{0,0,0}\put(3511,-2041){\vector(-2, 1){1260}}
}%
{\color[rgb]{0,0,0}\put(3511,-1411){\vector(-2, 1){900}}
}%
{\color[rgb]{0,0,0}\put(3511,-1051){\vector(-2, 1){360}}
}%
\put(3511,-2491){\makebox(0,0)[lb]{\smash{{\SetFigFont{10}{12.0}{\rmdefault}{\mddefault}{\updefault}{\color[rgb]{0,0,0}$(t,q_0^n(t))$}%
}}}}
\put(3511,-2041){\makebox(0,0)[lb]{\smash{{\SetFigFont{10}{12.0}{\rmdefault}{\mddefault}{\updefault}{\color[rgb]{0,0,0}$q_0^n$}%
}}}}
\put(3511,-1411){\makebox(0,0)[lb]{\smash{{\SetFigFont{10}{12.0}{\rmdefault}{\mddefault}{\updefault}{\color[rgb]{0,0,0}$q_{i_2}^n$}%
}}}}
\put(3511,-1051){\makebox(0,0)[lb]{\smash{{\SetFigFont{10}{12.0}{\rmdefault}{\mddefault}{\updefault}{\color[rgb]{0,0,0}$q_{i_1}^n$}%
}}}}
\put(7066,-1591){\makebox(0,0)[lb]{\smash{{\SetFigFont{10}{12.0}{\rmdefault}{\mddefault}{\updefault}{\color[rgb]{0,0,0}$Q$}%
}}}}
\put(1261,-3616){\makebox(0,0)[lb]{\smash{{\SetFigFont{10}{12.0}{\rmdefault}{\mddefault}{\updefault}{\color[rgb]{0,0,0}$c_l$}%
}}}}
\put(3151,-3616){\makebox(0,0)[lb]{\smash{{\SetFigFont{10}{12.0}{\rmdefault}{\mddefault}{\updefault}{\color[rgb]{0,0,0}$c_r$}%
}}}}
\end{picture}%
  \caption{Perturbations of $q_0$.}
  \label{fig:1}
\end{figure}

First, make a piece-wise linear map $Q : [0,1] \to \R$ with constant
slopes equal to $\pm \exp(h)$ which fixes the boundary points $0$,
$1$, and has exactly $k_t$ turning points
$t_1,\ldots,t_{k_t}$. Moreover, one can construct $Q$ in such a way
that none of the turning points is fixed by $Q$ and the positions of
the turning values on the graph of $Q$ are oscillating around the
diagonal, i.e. if $Q(t_i)> t_i$, then $Q(t_{i+1}) < t_{i+1}$, and if
$Q(t_i)< t_i$, then $Q(t_{i+1}) > t_{i+1}$. The last property is
equivalent to the following: each interval $[t_i,t_{i+1}]$ contains
a fixed point of $Q$.  An example of such a map is shown on
Figure~\ref{fig:1} on the right. Notice that the turning values are
not required to belong to the interval $[0,1]$.

In a short while we are going to use the following property of the map
$Q$: for any two consecutive turning points $t_i$ and $t_{i+1}$ the
interval $[Q(t_i),Q(t_{i+1})]$ contains one of these turning
points. Indeed, the interval $[t_i, t_{i+1}]$ contains a fixed point
of $Q$, so does $[Q(t_i),Q(t_{i+1})]$. If the last interval does not
contain $t_i$ and $t_{i+1}$, then $|Q(t_i)-Q(t_{i+1})|<|t_i-t_{i+1}|$
and the slope of the corresponding branch is less than one which is a
contradiction.

Now we are ready to construct the sequence $q_i$. Let $c_l$ and $c_r$
be the turning points of $q_0$ to the left and right of $t$ (or one of
$c_l$, $c_r$ can be a boundary point of $\Dom(q_o)$ if there is no
turning point there). We are going to glue a scaled copy of $Q$ at the
point $t$ and shift branches of $q_0$ defined on the intervals
$[c_r, t]$ and $[t,c_r]$ up and down, see Figure~\ref{fig:1}. More
precisely, we take the graph of the map
$x \mapsto \exp(-h(n-1))\, K^{-1} Q(Kx)$ defined on $[0,K^{-1}]$ for
large values of $K$, place it on the graph of $q_0$ around the point
$(t,q_0(t))$, and then adjust branches of $q_0$ in such a way that we
obtain a graph of a map in $\Lc_\Ic(=h)$. By taking a small
perturbation of the obtained map if necessary we can assume that all
its turning points are non-periodic. (Indeed, all maps in $\Lc_\Ic(=h)$
having a periodic turning point lie on a countable number of
codimension one planes, so maps without periodic turning points are
dense in $\Lc_\Ic(=h)$.) A sequence of maps in $\Lc_\Ic(=h)$ obtained in
this way for larger and larger values of $K$ and tending to $q_0$ we
will denote by $q_i$.

The polynomials in $\Pc_\Ic$ semi-conjugate to $q_i$ will be denoted
by $f_i$. Once again, because of the rigidity these polynomials are
unique. Since all turning points of maps $q_i$ are distinct, the
critical points of $f_i$ are distinct as well, so they are
quadratic. Moreover, in this case the semi-conjugacies between $f_i$
and $q_i$ are, in fact, just conjugacies. This implies that the
topological entropies of $f_i$ and $q_i$ coincide and all these
polynomials belong to $\pPc_\Ic(=h)$. By taking a subsequence we can
assume that the sequence $f_i$ converges to a polynomial
$f_0 \in \Pc_\Ic$.

By the continuity of the topological entropy we know that $ \htop(f_0)=h$.
From Lemma~\ref{lm:cont} it follows that $f_0$ is semi-conjugate to $q_0$
and, therefore, $f_0 \in \shp_\Ic(q_0)$. Now we will show that all
critical points of $f_0$ are distinct. It is obvious that if $c$ is a
critical point of $f_0$ which is mapped onto a simple turning point of
$q_0$, it has to be quadratic. Next, suppose that $f_0$ has a degenerate
critical point $c_*$ which is mapped onto $t$ by the
semi-conjugacy. Consider two cases.

Case 1: the point $c_*$ is periodic of period $n$, that is
$f_0^n(c_*)=c_*$ (recall that $t$ is also periodic of period
$n$). Then $c_*$ is a superattractor of $f_0$ and all polynomials
sufficiently close to $f_0$ will have their critical points close to
$c_*$ converge to a periodic attractor. This means that for each
sufficiently large value of $i$ the map $f_i$ has a critical point
with periodic itinerary, hence, the map $q_i$ has a periodic turning
point, which is a contradiction.

Case 2: $f_0^n(c_*) \neq c_*$. Then there exists a small interval $[a,b]$
containing the point $c_*$ such that $f_0^n([a,b])$ does not intersect
$[a,b]$. Take sufficiently
large $i$ so that the interval $[a,b]$ contains at least two critical
points $c_1$, $c_2$ of $f_i$ and so that $f_i^n([c_1,c_2]) \cap
[c_1,c_2] = \emptyset$. This is a contradiction because we checked
that $Q([t_k,t_{k+1}])$ contains either $t_k$ or $t_{k+1}$, a similar
property holds for $q_i$ because of its construction and maps $f_i$
and $q_i$ are topologically conjugate.

So, we have proved that $f_0 \in \oPc_\Ic(=h)$. Combining this and the
facts that $f_0 \in \shp_\Ic(q_0)$, that the set $\pPc_\Ic(=h)$ is
connected and the sequence $f_i\in
\pPc_\Ic(=h)$ converges to $f_0$ and that the set $\shp_\Ic(q_0)\cap
\oPc_\Ic(=h)$ is connected we get that the sets
 $\pPc_\Ic(=h)$ and $ \shp_\Ic(q_0)\cap \oPc_\Ic(=h)$
 cannot be separated, so
 $$\pPc_\Ic(=h) \cup \left(\shp_\Ic(q_0)\cap \oPc_\Ic(=h)\right)$$
is connected. Equality~(\ref{eq:op}) implies that $\oPc_\Ic$ is
connected too.

\section{On Thurston's question}
\label{sec:tq}

In the rest of the paper we will argue that most likely the answer to
Thurston's question is negative. 

In the arguments which follow we will not need multi-interval maps, so
we set $N=1$ from now on. Consider the space $\Lc_\Ic$ for some combinatorial information
$\Ic=\{1,\sigma,l,\sign\}$. It is clear that this space is
parameterised by $|l|$ parameters. Using notation of
Section~\ref{sec:multi-interval-multi} these parameters are the
entropy $h$ and the coefficients $b_1^i$ where
$i=1,\ldots,l(1)-1$. Notice that $b_1^0$ and $b_1^{l(1)}$ are fixed by
the boundary conditions. Since we are going to work with the case $N=1$ for
now we will drop the subscript $\cdot_1$ for the coefficients
$b_1^i$ and write $b^i$ instead. The
same applies to $l(1)$ and $s(1)$.

The turning points $0\le c^1\le \cdots \le c^l\le 1$ partition the
interval $I=[0,1]$ into $l+1$ open subintervals which we denote by
$J^0,\ldots,J^l$. Some of these intervals can be degenerate if some
turning points collide. Given a map $q\in \Lc_\Ic$ and a point $x \in
I$ we call an infinite sequence of symbols in
$\{c^1,\ldots,c^l,J^0,\ldots,J^l\}$ the \textit{itinerary} of $x$ if
the iterate $q^n(x)$ belongs to the corresponding element of the
sequence. Notice that if $q$ has collided turning points, the
itinerary of a point may be not unique, but this will not cause any
problems for us. The \emph{$n$-itinerary} of $x$ we will call the
sequence of the first $n+1$ elements of the itinerary which control
points $q^m(x)$ for $m=0,\ldots,n$.

The itinerary $\tilde \It=\{\tilde I_m\}$, $m=0,\ldots$, is called
\emph{compatible} with the itinerary $\It=\{I_m\}$ if the following
holds. For all $m$
\begin{enumerate}
\item if $\tilde I_m$ is one of the intervals $J^i$, then $I_m =
  \tilde I_m$;
\item if $\tilde I_m$ is one of the turning points, let it be $c^i$, then
  $I_m$ is either $c^i$ or $J^{i-1}$ or $J^i$.
\end{enumerate}

Take a map $q\in \Lc_\Ic$ which has a turning point $c^{i_0}$ which is
mapped to another turning point $c^{i_1}$ by some iterate $q^n$ and
let the orbit $\{q^m(c^{i_0}), \, m=1,\ldots, n-1\}$ not contain other
turning points. We do allow the case $i_0=i_1$ where the turning point
becomes periodic.  Denote the $n$-itinerary of $c^{i_0}$ by $\It^{i_0}$. It
it easy to see by a direct computation that the equation
$q^n(c^{i_0})=c^{i_1}$ has the form
\begin{equation} 
\sum_{i=1}^{l-1} Q^{\It^{i_0}}_i(e^h)b^i = Q^{\It^{i_0}}_0(e^h),\label{eq:6}
\end{equation}
where $Q^{\It^{i_0}}_i$ are some polynomials with rational
coefficients. These polynomials have some particular structure which
we will discuss in Section~\ref{sec:ordinary-isentropes}.  Here we
also used the equalities $c^i=\frac 12 (-1)^i s e^{-h} (b^{i-1}-b^i)$
and $q(c^{i})=\frac 12 (b^{i-1}+b^i)$.

This equation we will call the \emph{bifurcation} equation of
$\It^{i_0}$ and the polynomials $Q^{\It^{i_0}}_i$ will be called
\emph{bifurcation} polynomials. Notice that the bifurcation
equation is always well defined for periodic turning points.

Obviously, if another map $\tilde q\in \Lc_\Ic$ has a turning point with the
same $n$-itinerary as the turning point of $q$ under consideration,
then the parameters of this map satisfy equation~(\ref{eq:6}). Notice
that even if the $n$-itinerary of this turning point is just
compatible with $\It^{i_0}$, then the parameters of $\tilde q$ have to satisfy
equation~(\ref{eq:6}). This is an important observation which deserves
to be formulated as a lemma:

\begin{lemma}
  Let $q$ and $\tilde q$ be in $\Lc_\Ic$, $c^{i_0}$ and
  $\tilde c^{i_0}$ be their turning points with $n$-itineraries
  $\It^{i_0}$, $\tilde \It^{i_0}$. Moreover, let $\tilde \It^{i_0}$ be
  compatible with $\It^{i_0}$ and $q^n(c^{i_0})$ be a turning point of
  q (so the bifurcation equation is defined)\footnote{Also notice that
    the compatibility condition implies that in this case $\tilde
    q(\tilde c^{i_0})$ is also a turning point}. Then the parameters of
  the map $\tilde q$ satisfy the bifurcation equation of the map $q$:
  $$
  \sum_{i=1}^{l-1} Q^{\It^{i_0}}_i(e^{\tilde h})\tilde b^i =
  Q^{\It^{i_0}}_0(e^{\tilde h}).
  $$
\end{lemma}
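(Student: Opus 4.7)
The plan is to derive the bifurcation equation symbolically, by following the orbit of $c^{i_0}$ through the branch formulas of $q$, and to observe that the resulting polynomial coefficients depend only on a sequence of branch indices that is compatible with the itinerary $\It^{i_0}$. Compatibility of $\tilde \It^{i_0}$ with $\It^{i_0}$ is then exactly what is needed to rerun the identical symbolic computation for $\tilde q$ and obtain the same polynomial identity in $\tilde b^i$ and $e^{\tilde h}$.

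First I would fix, for $m=0,\ldots,n-1$, a branch index $j_m$ realising the transition $x_m \mapsto x_{m+1}$ of $q$, where $x_m := q^m(c^{i_0})$: if $I_m = J^{j}$ set $j_m = j$; if $I_m = c^{j}$ pick either $j_m = j-1$ or $j_m = j$, both of which are admissible since at a turning point the two one-sided branch formulas coincide. The branch formula then reads
$$x_{m+1} = (-1)^{j_m} s\, e^h x_m + b^{j_m}.$$
Composing these $n$ affine maps, substituting the identity $c^{i_0} = \frac{1}{2}(-1)^{i_0} s\, e^{-h}(b^{i_0-1} - b^{i_0})$ for $x_0$, and eliminating $b^0$ and $b^l$ via the boundary conditions~\eqref{eq:4}--\eqref{eq:5} express $x_n$ as a linear combination of $b^1,\ldots,b^{l-1}$ whose coefficients are polynomials in $e^h$ with rational coefficients, depending only on the sequence $(j_0,\ldots,j_{n-1})$ and the combinatorial data $\Ic$. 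Equating $x_n$ with $c^{i_1} = \frac{1}{2}(-1)^{i_1} s\, e^{-h}(b^{i_1-1} - b^{i_1})$ and collecting terms gives the bifurcation equation~\eqref{eq:6}, so the polynomials $Q^{\It^{i_0}}_i$ are determined by $(j_0,\ldots,j_{n-1})$ and $\Ic$ alone.

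Next I would apply the identical symbolic recursion to $\tilde q$, starting from $\tilde x_0 := \tilde c^{i_0}$ and using the very same branch index sequence $(j_0,\ldots,j_{n-1})$. The central claim is that this sequence is an admissible branch program for $\tilde q$, so that $\tilde x_{m+1} = (-1)^{j_m} s\, e^{\tilde h} \tilde x_m + \tilde b^{j_m}$ for every $m$. This follows from a case analysis using compatibility: if $\tilde I_m = J^{k}$ then $I_m = J^{k}$, which forces $j_m = k$ and puts $\tilde x_m$ on the branch-$k$ interval of $\tilde q$; if $\tilde I_m = c^{k}$ then $I_m \in \{c^{k}, J^{k-1}, J^{k}\}$, so $j_m \in \{k-1,k\}$, and at the turning point $\tilde c^{k}$ both branch-$(k-1)$ and branch-$k$ formulas of $\tilde q$ evaluate to $\tilde q(\tilde c^{k})$, so either index is admissible. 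The same polynomial identity therefore transfers word for word to $\tilde b^i$ and $e^{\tilde h}$. Finally, compatibility at step $n$ forces $\tilde I_n = c^{i_1}$ (as no interval symbol is compatible with the turning point $c^{i_1}$), hence $\tilde q^n(\tilde c^{i_0}) = \tilde c^{i_1}$, and substitution yields
$$\sum_{i=1}^{l-1} Q^{\It^{i_0}}_i(e^{\tilde h})\, \tilde b^i = Q^{\It^{i_0}}_0(e^{\tilde h}).$$
The main obstacle is the careful bookkeeping at steps where $q$ and $\tilde q$ disagree, namely $\tilde I_m = c^k$ while $I_m \in \{J^{k-1}, J^k\}$: here the argument rests crucially on the two-sided freedom at turning points, which is precisely what compatibility builds in.
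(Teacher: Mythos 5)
Your proposal is correct and follows exactly the argument the paper has in mind: the bifurcation polynomials $Q^{\It^{i_0}}_i$ are produced by composing the branch formulas dictated by the itinerary, and compatibility (together with the fact that at a turning point the two adjacent branch formulas of a map in $\Lc_\Ic$ take the same value) guarantees that the very same composition tracks the orbit of $\tilde c^{i_0}$ under $\tilde q$, with $\tilde I_n=c^{i_1}$ forced at the last step. The paper states this as an immediate observation without writing out a proof, and your write-up, including the careful treatment of the steps where $\tilde I_m$ is a turning point while $I_m$ is an adjacent interval, is precisely the justification intended there.
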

Notice that in the lemma above the maps $q$ and $\tilde q$ can have
different topological entropies.

Let us make clear that the converse of this lemma does not hold. If
for some map its parameters satisfy equation~(\ref{eq:6}), it does not imply
that the corresponding turning point has the given $n$-itinerary: one
would have to consider a bunch of inequalities together with equation
(\ref{eq:6}) to guaranty that all the points from the orbit of the
turning point fall into appropriate intervals as the itinerary
dictates. However, maps close to $q$ and satisfying the bifurcation
equation do have a turning point with the same $n$-itinerary as the
turning point $c^{i_0}$ of $q$ as the following lemma claims.

\begin{lemma} \label{lm:neib}
  Let $c^{i_0}$ be a turning point of $q\in \Lc_\Ic$ such that
  $q^n(c^{i_0})$ is also a turning point and $n\ge 1$ is minimal with
  this property. Then there exists a neighbourhood of $q$ in the space
  $\Lc_\Ic$ such that every map in this neighbourhood satisfying the
  corresponding bifurcation equation has a turning point
  with $n$ itinerary coinciding with $n$-itinerary of $c^{i_0}$ of the
  map $q$. 
\end{lemma}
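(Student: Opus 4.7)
The plan is to combine the openness of the intermediate symbolic conditions with the rigidity imposed by the bifurcation equation. First I would unpack the hypothesis: since $n \geq 1$ is minimal with $q^n(c^{i_0})$ a turning point, the iterates $q^m(c^{i_0})$ for $m = 1, \ldots, n-1$ all lie in open intervals $J^{k_m}$, so the $n$-itinerary has the concrete form $\It^{i_0} = (c^{i_0}, J^{k_1}, \ldots, J^{k_{n-1}}, c^{i_1})$ for some $i_1$.

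The first step would be a continuity argument. The turning points $\tilde c^i$ of a map $\tilde q \in \Lc_\Ic$ depend continuously on the parameters $(\tilde h, \tilde b^1, \ldots, \tilde b^{l-1})$, as do all iterates $\tilde q^m(\tilde c^{i_0})$ as long as no intermediate iterate crosses a turning point. Because each condition $q^m(c^{i_0}) \in J^{k_m}$ is open and finitely many are imposed, there is a neighbourhood $U$ of $q$ in $\Lc_\Ic$ such that for every $\tilde q \in U$ one has $\tilde q^m(\tilde c^{i_0}) \in J^{k_m}$ for $m = 1, \ldots, n-1$. Thus the first $n$ symbols of the $n$-itinerary of $\tilde c^{i_0}$ already match those of $\It^{i_0}$ on every $\tilde q \in U$; only the final symbol is not yet controlled.

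The second step uses the bifurcation equation to force the last symbol. Recall that equation~\eqref{eq:6} was obtained by applying, at each step $m = 0, \ldots, n-1$, the specific affine branch of $q$ determined by the symbol $k_m$, and then equating the resulting closed form for $q^n(c^{i_0})$ with $c^{i_1}$. For $\tilde q \in U$, the previous step guarantees that the \emph{same} sequence of branches is applied along the orbit of $\tilde c^{i_0}$, so the identical algebraic derivation produces
\[
\tilde q^n(\tilde c^{i_0}) - \tilde c^{i_1} \;=\; Q^{\It^{i_0}}_0(e^{\tilde h}) - \sum_{i=1}^{l-1} Q^{\It^{i_0}}_i(e^{\tilde h})\, \tilde b^i,
\]
where the polynomials $Q^{\It^{i_0}}_i$ are literally those appearing in the bifurcation equation. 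Hence, for $\tilde q \in U$, satisfying the bifurcation equation is equivalent to $\tilde q^n(\tilde c^{i_0}) = \tilde c^{i_1}$, which supplies the missing last symbol and gives the desired coincidence of $n$-itineraries.

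The main obstacle — and really the only thing that needs care — is verifying that the algebraic expression computing $\tilde q^n(\tilde c^{i_0})$ for $\tilde q$ is given by exactly the same polynomials $Q^{\It^{i_0}}_i$ as those defining the bifurcation equation of $q$. This reduces to checking that the branch of $\tilde q$ acting on $\tilde q^m(\tilde c^{i_0})$ is the one indexed by $k_m$, which is precisely what the continuity step secures. Once this branch-matching is established, the lemma falls out by a direct substitution, and no rigidity or dynamical input beyond the piecewise linear formula is needed.
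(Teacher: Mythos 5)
Your proposal is correct and follows essentially the same route as the paper's proof: a continuity/openness argument guarantees the first $n-1$ symbols persist in a neighbourhood, and then the bifurcation equation (which, once the branches match, computes $\tilde q^n(\tilde c^{i_0})-\tilde c^{i_1}$ via the same polynomials) forces the final symbol. The extra detail you give on the branch-matching is exactly the implicit content of the paper's shorter argument.
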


Remark. Notice that if $q^n(c^{i_0}) = c^{i_0}$, i.e. $c^{i_0}$ is a
periodic turning point, then all the maps in the neighbourhood given
by the lemma and satisfying the bifurcation equation will have a
periodic turning point of period $n$ with the same itinerary.

\begin{proof}
  Let $\{c^{i_0}, J^{m_1},\ldots,J^{m_{n-1}}\}$ be the $n-1$ itinerary
  of $c^{i_0}$. We know that $q^j(c^{i_0})$ is not a turning point for
  $j=1,\ldots,n-1$, so it belongs to the interior of
  $J^{m_j}$. Therefore, there exists a neighbourhood of $q$ in the
  space $\Lc_\Ic$ such that if a map belongs to this neighbourhood,
  then the $n-1$ itinerary of the corresponding to $c^{i_0}$ turning
  point is $\{c^{i_0}, J^{m_1},\ldots,J^{m_{n-1}}\}$. Then the
  bifurcation equation ensures that the $n$ itinerary of this point
  will be $\{c^{i_0}, J^{m_1},\ldots,J^{m_{n-1}}, c^{i_1}\}$ where
  $c^{i_1}=q^n(c^{i_0})$.
\end{proof}

The equation~(\ref{eq:6}) is linear in all $b^i$ and as such it is easy
to solve. There are several cases to consider:

\textbf{Case 1.} For given $h$ some of the polynomials $Q^{\It^{i_0}}_i$,
$i=1,\ldots,l-1$, are non-zero at the point $e^h$. Then the parameters
of maps in $\Lc_\Ic(=h)$ satisfying equation~(\ref{eq:6}) form $l-2$
dimensional linear space. This case might be regarded as ``generic''.

\textbf{Case 2.} For given $h$ we have $Q^{\It^{i_0}}_i(e^h)=0$ for all
$i=0,1,\ldots,l-1$. Clearly, all parameters of maps in $\Lc_\Ic(=h)$
satisfy equation~(\ref{eq:6}). This is a very special case.

\textbf{Case 3.} For given $h$ all the polynomials $Q^{\It^{i_0}}_i$,
$i=1,\ldots,l-1$ vanish at $e^h$, but $Q^{\It^{i_0}}_0(e^h)\neq 0$. There are no
maps in $\Lc_\Ic(=h)$ which have the turning point $c^{i_0}$ with the
given itinerary.

These different cases motivate the following definition:
\begin{definition}
  Let $c^{i_0}$ be a turning point of $q\in \Lc_\Ic$ such that
  $q^n(c^{i_0})$ is also a turning point and $n\ge 1$ is minimal with
  this property. Then this turning point is called \emph{ordinary} if
  some of the polynomials $Q^{\It^{i_0}}_i$, $i=1,\ldots,l-1$ do not
  vanish at $e^{\htop(q)}$ (so we are in Case 1). If
  $Q^{\It^{i_0}}_i(e^{\htop(q)})=0$ for all $i=0,\ldots,l-1$, then the
  turning point $c^{i_0}$ is called \emph{exceptional} (Case 2 above).
\end{definition}

Remark. Since all the polynomials $Q^{\It^{i_0}}_i$ have rational
coefficients, it is clear that if an isentrope of entropy level $h$
has an exceptional turning point, then the number $e^h$ is
algebraic. In particular, only countably many isentropes can have
exceptional turning points.

Soon we will give some examples of ordinary and exceptional turning
points, however before that let us demonstrate their relevance to
Thurston's question. We need another definition first.

\begin{definition}
  A turning point of a map $q \in \Lc_\Ic$ is called \emph{controlled} if it is
  periodic or is mapped onto a periodic turning point by some iterate
  of $q$.

  A map $q\in \Lc_\Ic$ is called a \emph{codimension one hyperbolic}
  map if it has one turning point whose orbit does not contain any
  turning points, and all other $l-1$ turning points $c^{i_j}$,
  $j=1,\ldots,l-1$, are controlled. Moreover, if the determinant of
  the matrix $\|Q^{i_j}_i(e^{\htop(q)})\|$, $i,j = 1,\ldots,l-1$
  formed by the bifurcation polynomials is non-zero, such the map
  $q$ will be called an \emph{ordinary codimension one hyperbolic} map.

  Similarly, a critical point of a map $p \in \Pc_\Ic$ is called
  \emph{controlled} if it is contained in the basin of a periodic
  attracting point.

  A polynomial $p\in \Pc_\Ic$ is called a \emph{codimension one
    hyperbolic} map if its all periodic points are hyperbolic and it
  has exactly $l-1$ controlled critical points counted with the
  multiplicities.  Moreover, if $p$ is semi-conjugate to an ordinary
  codimension one hyperbolic map $q \in \Lc_\Ic(=\htop(p))$, then $p$
  will be called an \emph{ordinary codimension one hyperbolic} map.
\end{definition}

Notice that a codimension one hyperbolic map is not hyperbolic! It has
one critical point whose iterates do not converge to a periodic
attractor. 

\begin{lemma} \label{lm:r} Let $q\in \Lc_\Ic$ be an ordinary
  codimension one hyperbolic map and $c^{i_j}$, $j=1,\ldots,l-1$ be
  its controlled turning points. Then there exist an interval
  $(h_-,h_+)$ containing $\htop(q)$ and a function
  $r: (h_-,h_+) \to \Lc_\Ic$ such that
  \begin{itemize}
  \item $\htop(r(h))=h$ for all $h \in (h_-,h_+)$;
  \item the parameters $b^i$ of the map $r(h)$ are given by some rational
    functions $R_i(e^h)$; 
  \item $r(\htop(q))=q$;
  \item for all $h \in (h_-,h_+)$ the itineraries of the controlled
    turning points $c^{i_j}$ of $r(h)$ coincide with the itineraries
    of the corresponding turning points of the map $q$;
  \item the converse also holds: if the itineraries of $l-1$ turning points
    of a map $q'\in \Lc_\Ic$ are compatible with the itineraries of the
    corresponding controlled turning points of $q$ and $\htop(q')\in (h_-,h_+)$,
    then $q'=r(\htop(q'))$.
  \end{itemize}

  In particular, there are no other than $q$ maps in $\Lc_\Ic(=\htop(q))$ which
  have the controlled turning points with the itineraries compatible with the
  itineraries of the controlled turning points of $q$.
\end{lemma}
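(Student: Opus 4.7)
The plan is to view the $l-1$ bifurcation equations associated to the controlled turning points of $q$ as a linear system in the unknowns $b^1,\ldots,b^{l-1}$, solve it by Cramer's rule to obtain the $b^i$ as rational functions of $e^h$, and then check by continuity that the resulting one-parameter family stays inside $\Lc_\Ic$ and realises the prescribed itineraries in a small neighbourhood of $\htop(q)$.

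First I would collect, for each controlled turning point $c^{i_j}$ ($j=1,\ldots,l-1$), the smallest $n_j\ge 1$ with $q^{n_j}(c^{i_j})$ again a turning point, together with the associated $n_j$-itinerary $\It^{i_j}$. This produces $l-1$ bifurcation equations $\sum_{i=1}^{l-1} Q^{\It^{i_j}}_i(e^h)\, b^i = Q^{\It^{i_j}}_0(e^h)$. The ordinary assumption says that the matrix $M(e^h):=\|Q^{\It^{i_j}}_i(e^h)\|$ is non-singular at $e^{\htop(q)}$, and by continuity of the determinant it remains so on an open interval about $\htop(q)$, which I take as my initial $(h_-,h_+)$. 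Cramer's rule then delivers rational functions $R_i(e^h)$ solving the system, and I define $r(h)$ to be the piece-wise linear map with slopes $\pm e^h$ and parameters $b^i=R_i(e^h)$.

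Two things remain to be checked. First, that $r(h)$ actually lies in $\Lc_\Ic$: the defining inequalities (\ref{ieq:1})--(\ref{ieq:2}) are open conditions on the parameters, and $r(\htop(q))=q\in\Lc_\Ic$ because the parameters of $q$ themselves solve the Cramer system and the solution is unique; hence these inequalities persist on a possibly smaller subinterval. Second, that the itineraries of the controlled turning points of $r(h)$ match those of $q$: here Lemma~\ref{lm:neib} does the work. Since each bifurcation equation for $\It^{i_j}$ is satisfied by $r(h)$ by construction and $r(h)\to q$ as $h\to\htop(q)$, shrinking $(h_-,h_+)$ finitely many times (once per controlled turning point) makes each invocation of Lemma~\ref{lm:neib} legal and produces the full, eventually periodic, itinerary of every $c^{i_j}$ under $r(h)$.

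For the converse, and hence the final uniqueness clause of the lemma, I rely on the observation stated just before Lemma~\ref{lm:neib}: any map $q'\in\Lc_\Ic$ whose designated turning points carry itineraries compatible with the $\It^{i_j}$ automatically has its parameters satisfying the same bifurcation equations at $e^{\htop(q')}$. Provided $\htop(q')\in(h_-,h_+)$, non-singularity of $M(e^{\htop(q')})$ forces those parameters to equal $R_i(e^{\htop(q')})$, giving $q'=r(\htop(q'))$; specialising to $\htop(q')=\htop(q)$ yields the final assertion. The only real obstacle is bookkeeping: choosing $(h_-,h_+)$ small enough that non-singularity of $M$, the polytope inequalities, and the scope of Lemma~\ref{lm:neib} for all $l-1$ controlled points hold simultaneously. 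Since each of these is an open condition containing $\htop(q)$, a finite intersection suffices.
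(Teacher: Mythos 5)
Your proposal is correct and follows essentially the same route as the paper: interpret the $l-1$ bifurcation equations of the controlled turning points as a linear system in $b^1,\ldots,b^{l-1}$, use the non-vanishing determinant from the ordinariness hypothesis to solve it (the paper leaves Cramer's rule implicit, which is exactly where the rational functions $R_i(e^h)$ come from), invoke Lemma~\ref{lm:neib} after shrinking the interval to realise the itineraries, and obtain the converse from the compatibility observation preceding Lemma~\ref{lm:neib} together with uniqueness of the solution. Your write-up in fact supplies details the paper glosses over (persistence of the defining inequalities of $\Lc_\Ic$ and the chaining needed to control full, eventually periodic itineraries), so no gap relative to the paper's argument.
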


\begin{proof}
  From the previous discussion we already know that if the itineraries
  of the controlled turning points of some map $q'$ are the same (or
  compatible) as of $q$, then the parameters of $q'$ must satisfy the
  $l-1$ bifurcation equations. Notice that we have $l-1$ linear in
  $b$ equations which also depend on the parameter $h$. Since the map
  $q$ is ordinary, the solution of this system of bifurcation
  equations as a function of $h$ is well defined in some interval
  around the point $\htop(q)$. Using Lemma~\ref{lm:neib} and by
  shrinking this interval if necessary we can ensure that maps
  corresponding to the solutions of this system have the controlled
  points with the given itineraries.
\end{proof}

\begin{mytheorem} \label{th:D} Let $p\in \Pc_\Ic$ be an ordinary
  codimension one hyperbolic polynomial map of positive entropy. Then
  $p$ cannot be approximated by hyperbolic polynomial maps of the same
  entropy $\htop(p)$.
\end{mytheorem}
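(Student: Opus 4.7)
The plan is to argue by contradiction, exploiting the rigidity built into the invertible $(l-1)\times(l-1)$ bifurcation matrix. Let $q\in \Lc_\Ic(=h)$ denote the ordinary codimension one hyperbolic piece-wise linear map semi-conjugate to $p$ supplied by the definition, and suppose for contradiction that there were hyperbolic polynomials $p_n\in \Pc_\Ic$ with $\htop(p_n)=h$ and $p_n\to p$. Set $q_n := \Lambda(p_n)$. The first observation I would make is that hyperbolicity of $p_n$ forces every turning point of $q_n$ to be periodic: each critical point of $p_n$ sits in the basin of an attracting periodic orbit, the semi-conjugacy $H_n$ between $p_n$ and $q_n$ collapses these basins to points, and the Milnor--Thurston construction puts the $l$ critical points of $p_n$ in order-preserving bijection with the $l$ turning points of $q_n$. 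Consequently each turning point of $q_n$ lies on a periodic orbit of $q_n$.

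Next I would establish, purely at the combinatorial level, that for every fixed $N$ and all sufficiently large $n$ the $l-1$ turning points of $q_n$ corresponding to the controlled turning points $c^{i_1},\ldots,c^{i_{l-1}}$ of $q$ have $N$-itineraries compatible (in the paper's sense) with those of the $c^{i_j}$. The semi-conjugacy between $p$ and $q$ transfers the $N$-itineraries of the critical points of $p$ (with respect to the partition by critical points) to the $N$-itineraries of the corresponding turning points of $q$ (with respect to the partition by turning points). The $C^\infty$ convergence $p_n\to p$ gives $p_n^k(\tilde c_n^{i_j})\to p^k(c^{i_j})$ for every fixed $k$; whenever $p^k(c^{i_j})$ lies in the interior of an interval between consecutive critical points of $p$, the perturbed iterate eventually lies in the corresponding interval of $p_n$, and whenever $p^k(c^{i_j})$ lands on a critical point of $p$, compatibility allows the perturbed iterate to fall on either side of the matching critical point of $p_n$. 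Applying $H_n$ then transfers this compatibility to $q_n$'s turning points.

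Compatibility of $N$-itineraries for $N$ exceeding the longest preperiodic length appearing in $q$'s controlled turning points lets me invoke the lemma immediately preceding Lemma~\ref{lm:neib}: the parameters $b_n^i$ of $q_n$ must satisfy the $l-1$ bifurcation equations $\sum_i Q^{\It^{i_j}}_i(e^h)\, b_n^i = Q^{\It^{i_j}}_0(e^h)$ associated to the controlled turning points of $q$. Since $\htop(q_n)=h$ and the matrix $\|Q^{i_j}_i(e^h)\|$ is invertible by the ordinary codimension one hyperbolicity hypothesis, this linear system has a unique solution, namely the parameters of $q$ itself; hence $q_n=q$ for all large $n$. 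But by the first step all $l$ turning points of $q=q_n$ are then periodic, contradicting the assumption that $q$ has a free turning point.

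The main obstacle is that $\Lambda$ is not continuous, so the argument cannot be carried out by taking a topological limit of $q_n$ in $\Lc_\Ic$; the whole bridge from $p_n\to p$ back to $q_n=q$ must be made combinatorially, using that compatible finite itineraries already force the full bifurcation equations to hold. A secondary point that needs care is verifying that $H_n$ genuinely puts critical points of $p_n$ in order-preserving bijection with turning points of $q_n$ (so that the notion of "corresponding" turning points, and thus the transfer of itineraries, is unambiguous); this follows from the standard Milnor--Thurston construction once one observes that, for hyperbolic $p_n$, no critical point can be collapsed to a non-turning point by $H_n$.
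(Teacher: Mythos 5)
Your overall architecture tracks the paper's: pass to the piece-wise linear models, show the controlled turning points of the perturbed models have itineraries compatible with those of $q$, and use the invertibility of the bifurcation matrix (this is exactly the content of Lemma~\ref{lm:r}) to force $q_n=q$, contradicting the existence of the free turning point. But the justification of your central compatibility claim has the compatibility relation backwards at precisely the decisive moments, and this is a genuine gap. In the paper's definition, "$\It_{q_n}$ compatible with $\It_q$" forces the following: at every time at which the controlled turning point of $q$ sits \emph{exactly at} a turning point, the corresponding turning point of $q_n$ must sit exactly at the same turning point (it is only where $q$'s orbit lies in an open interval that $q_n$'s orbit is allowed the extra freedom of landing on an adjacent turning point). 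This exact hitting is what makes the bifurcation equations hold for $q_n$; if at such a time the perturbed orbit were merely "on either side", the bifurcation equation would in general fail — indeed generic small perturbations of $q$ inside $\Lc_\Ic(=h)$ destroy the periodic turning points and satisfy no bifurcation equation. Your argument, however, only uses $C^\infty$-closeness of $p_n$ to $p$ for finitely many iterates and explicitly allows the perturbed iterate "to fall on either side of the matching critical point", so it establishes compatibility in the useless orientation and the invocation of the lemma preceding Lemma~\ref{lm:neib} is unjustified. Finite smooth closeness can never give the exact hitting; what gives it is hyperbolicity: the attracting periodic orbits of $p$ are hyperbolic, they persist in a neighbourhood of $p$, the critical points of $p_n$ corresponding to the controlled ones remain trapped in the basins of the continued attractors for large $n$, and the Milnor--Thurston semi-conjugacy collapses those basins, so the corresponding turning points of $q_n$ are exactly (eventually) periodic with the same symbolic data as those of $q$. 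This persistence step is exactly what the paper's proof supplies by choosing a suitable neighbourhood of $p$, and it is the missing bridge in your "purely combinatorial" version.

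A secondary inaccuracy: hyperbolicity of $p_n$ does not make every turning point of $q_n$ periodic. A critical point of $p_n$ may lie in a strictly preperiodic component of a basin, in which case the corresponding turning point of $q_n$ is only eventually periodic; the correct statement (using that, by the non-positive Schwarzian derivative, the relevant immediate basins contain critical points) is that every turning point of $q_n$ is \emph{controlled}, i.e.\ its orbit contains a periodic turning point. That weaker statement still contradicts the fact that $q$ has a turning point whose orbit contains no turning points, so this part of your argument is repairable, but as stated the periodicity claim is false.
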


Remark 1. There is nothing special about the polynomial space here, this
theorem also holds for the space $\bMc_\Ic^1$.

Remark 2. If one drops the condition that the map is ordinary, the
theorem does not hold anymore. Once we construct maps with exceptional
critical points in Sections~\ref{sec:except-isentr-from}
and~\ref{sec:casc-except-itin}, one can easily find codimension one
hyperbolic maps which can be approximated by hyperbolic maps of the
same entropy.

Remark 3. We will see (Theorem~\ref{th:no_exceptional}) that if
the entropy is larger than $\log 3$, then the corresponding isentrope
can contain only ordinary codimension one hyperbolic maps. In the such
case if one finds a codimension one hyperbolic map, they do not need
to check that it is ordinary, it holds automatically.

\begin{proof}
  This Theorem is a consequence of Lemma~\ref{lm:r}.

  From the definition of ordinary maps we can find an ordinary
  codimension one hyperbolic map $q\in \Lc_\Ic(=\htop(p))$
  semi-conjugate to $p$. Since all attracting periodic points of $p$ are
  hyperbolic, there exists a neighbourhood of $p$ in $\Pc_\Ic$
  where these attracting points persist and the critical points of
  maps in this neighbourhood corresponding to the controlled critical
  points of $p$ are also controlled and are in the basins of
  attraction of the corresponding periodic points. Let $p'$ be in this
  neighbourhood and $q' \in \Lc_\Ic$ be semi-conjugate to $p'$ and of
  the same entropy $\htop(q')=\htop(p')$. Let $c^{i_0}(p)$ be one of
  the controlled critical points of $p$, and $c^{i_0}(p')$,
  $c^{i_0}(q)$ be the corresponding critical (turning) points of $p'$,
  $q'$. It is easy to see that the itinerary of $c^{i_0}(q')$ is
  compatible with the itinerary of $c^{i_0}(q)$. Now assume that
  $\htop(p')=\htop(p)$.  Since the map $q$ is ordinary and due to
  Lemma~\ref{lm:r} we know that if $\htop(p')=\htop(p)$ and,
  therefore, $\htop(q')=\htop(q)$, then $q'$ and $q$ are the same
  maps. One of the turning points of $q$ is not eventually periodic,
  hence the map $p'$ has a critical point which is not in the basin of
  attraction of some periodic attractor. So, the map $p'$ cannot be
  hyperbolic.
\end{proof}

\section{Exceptional isentropes}
\label{sec:except-isentr}

In this section we study with more details when an isentrope can have
an exceptional turning point and partially answer on
Question~\ref{q:2}. We start with a number of examples. We restrict
ourselves to the case of bimodal maps which can be easily
generalised. To make computations simpler we will rescale the domain
of the definition of maps we consider so our bimodal maps are defined
by this formula:
\begin{equation}
  \label{eq:8}
q=q_{\lambda,b} : x \mapsto \left\{
\begin{array}{ll}
  \phantom{-}\lambda x + 1, & \mbox{if } x \in J^0=[-a, c^1]\\
  -\lambda x +b, & \mbox{if } x \in J^1=[c^1, c^2] \\
  \phantom{-}\lambda x - 1, & \mbox{if } x \in J^2=[c^2, a]\\
\end{array}
\right.
\end{equation}
where $\lambda = e^h$, $a=\frac{1}{\lambda - 1}$,
$c^1=\frac{b-1}{2\lambda}$, $c^2=\frac{b+1}{2\lambda}$. Also, notice
that $\pm a$ are fixed points of $q$ and that $q(c^1)=\frac 12 (b+1)$, $q(c^2)=\frac 12 (b-1)$. We want $q$ to
map the interval $[-a,a]$ into itself, this implies that $\lambda$ and
$b$ should satisfy inequalities $\lambda \in [1,3]$ and $ b \in
[-\frac{3-\lambda}{\lambda-1},\frac{3-\lambda}{\lambda-1}]$.

\subsection{Exceptional isentropes from unimodal tent maps.}
\label{sec:except-isentr-from}

The simplest examples of exceptional isentropes can be constructed
using unimodal tent maps with a periodic turning point.

Fix some parameter $\lambda$ in the interval $(1,2)$. For such
$\lambda$ there exists a non-degenerate interval of parameters $b$
(which is $[-\frac{3-\lambda}{\lambda-1}, -1]$) such that
$q_{\lambda,b}$ maps the interval $[-a, c^2]$ inside itself. The
itineraries of the turning point $c^1$ for all values of $b$ in this
interval are the same and coincide with the itinerary of the turning
point of the unimodal tent map of entropy $h=\log \lambda$.

Now fix $\lambda=e^h \in (1,2)$ in such a way that the unimodal tent
map of entropy $h$ has a periodic turning point. Then in the bimodal
family maps $q_{\lambda,b}$ will have a periodic turning point $c_1$
of the same itinerary for all
$b\in [-\frac{3-\lambda}{\lambda-1}, -1]$. Because of Lemma~\ref{lm:r}
we know that for ordinary turning points we can have at most one
parameter $b$ for the given itinerary, therefore $c^1$ is exceptional.
On the other hand, when $b$ varies in this interval, the itinerary of
the other turning point $c^2$ is not constant and there are
infinitely many different itineraries of $c^2$ when this turning point
becomes preperiodic. Using arguments similar to ones we use in the
first part of the paper (in particular, continuity of the map
$\tLambda$) one can show that the isentrope $\Pc_\Ic(=\log \lambda)$ contains
infinitely many combinatorially different hyperbolic maps and that
this isentrope contains a codimension one hyperbolic maps which can be
approximated by hyperbolic maps.

\subsection{Cascades of exceptional itineraries.}
\label{sec:casc-except-itin}

There is another mechanism which produces isentropes with exceptional
turning points and generalises the previous construction. We start
with a concrete example where most of the things can be explicitly
computed.

In the bimodal family under consideration let us consider maps which
have a periodic turning point $c^1$ of period 2 and with itinerary
$\It=\{c^1, J^2, c^1, \ldots\}$. One can easily compute the bifurcation
equation for this itinerary:
$$
(\lambda^2 -1) b = -(\lambda-1)^2,
$$
so $Q^{\It}_1(\lambda) = \lambda^2-1$, and
$Q^{\It}_0(\lambda) = -(\lambda-1)^2$. The case of $\lambda=1$ is
always special: it is easy to see that for any itinerary $\It$ one has
$Q^\It_1(1)=0$.  In our case we have $Q^\It_0(1)=0$ as well, so we can
reduce $\lambda -1$ factor and obtain
$$
(\lambda + 1) b = 1-\lambda.
$$
From this equation we can see that there is no exceptional isentropes
for the given itinerary $\It$ because the polynomials $\lambda+1$ and
$\lambda-1$ never vanish at the same time.

Nothing exciting so far. Now let us consider some other itinerary $\It'$
so that $\It$ is compatible to $\It'$. For example, let $\It'=\{c^1,J^2,
J^0,J^2,c^1,\ldots\}$. It is clear that if maps with such the
itinerary exist, then the turning point $c^1$ is periodic of period
$4$. Since $\It$ is compatible with $\It'$, all the solutions of the
bifurcation equation for $\It$ are also solutions of the bifurcation
equation for $\It'$. This implies that the bifurcation polynomials for
$\It'$ can be factorised as
\begin{eqnarray*}
  Q^{\It'}_1(\lambda) & = & F(\lambda) Q^{\It}_1(\lambda)\\
  Q^{\It'}_0(\lambda) & = & F(\lambda) Q^{\It}_0(\lambda)
\end{eqnarray*}
where $F$ is a polynomial. It is easy to see that the degree of the
polynomials $Q^{\It'}_{0,1}$ is $4$, so the degree of $F$ is two. A
simple computation (assisted by Wolfram Mathematica) gives
$$
F(\lambda) = \lambda^2+1.
$$
The roots of $F$ are complex, hence there are no other solutions of
the bifurcation equation for $\It'$ in the region of the interest
except $b=\frac{1-\lambda}{1+\lambda}$ and there are no bimodal maps
in $\Lc_\Ic$ which realise $\It'$.

If instead of the itinerary $\{c^1,J^2, J^0,J^2,c^1,\ldots\}$ we
considered $\It'=\{c^1,J^2, J^1,J^2,c^1,\ldots\}$, we could argue
again that the bifurcation polynomials must have a common factor which
can be computed to be $F(\lambda) = \lambda^2-1$. The roots of this
polynomial are real, but of no interest for us, so again, there are no
bimodal maps which realise $\It'$.

Let us move forward and find a nontrivial itinerary compatible to
$\It$ which can be realised by some bimodal maps. Consider the
itinerary $I'=\{c^1,J^2, J^1,J^2,J^0,J^2,c^1,\ldots\}$. Using the same
argument as before we can factorise its bifurcation polynomials and
compute the factor to be $F(\lambda) = \lambda^4-\lambda^2-1$. This
factor polynomial has two complex roots, one negative root and one
positive root
$\lambda_e=\sqrt{\frac{1}{2} \left(\sqrt{5}+1\right)} \approx
1.27202$. Furthermore, we can check that for this value of
$\lambda=\lambda_e$ if $-0.119726\leq b\leq 0.346014$, then the
turning point $c^1$ indeed has the itinerary $\It'$. Thus we have
found an exceptional itinerary.

Let us make an interesting observation. Because of the symmetry we
know that if the turning point $c^2$ is periodic with the itinerary
$\It''=\{c^2, J^0, J^1,J^2,J^1,J^0,c^2,\ldots\}$, then $\It''$ is
exceptional as well for parameters $\lambda=\lambda_e$ and $-0.346014
\leq b \leq 0.119726$. This implies that for all $b \in (-0.119726,
0.119726)$ both turning points are periodic of period $6$ with the
constant itineraries, and therefore all maps in this parameter
interval are combinatorially and topologically conjugate, see Figure~\ref{fig:2}. We
investigate this phenomenon in more details in the next section.

\begin{figure}[t]
  \centering
  \begin{tabular}{ll}
    \includegraphics[scale=0.8]{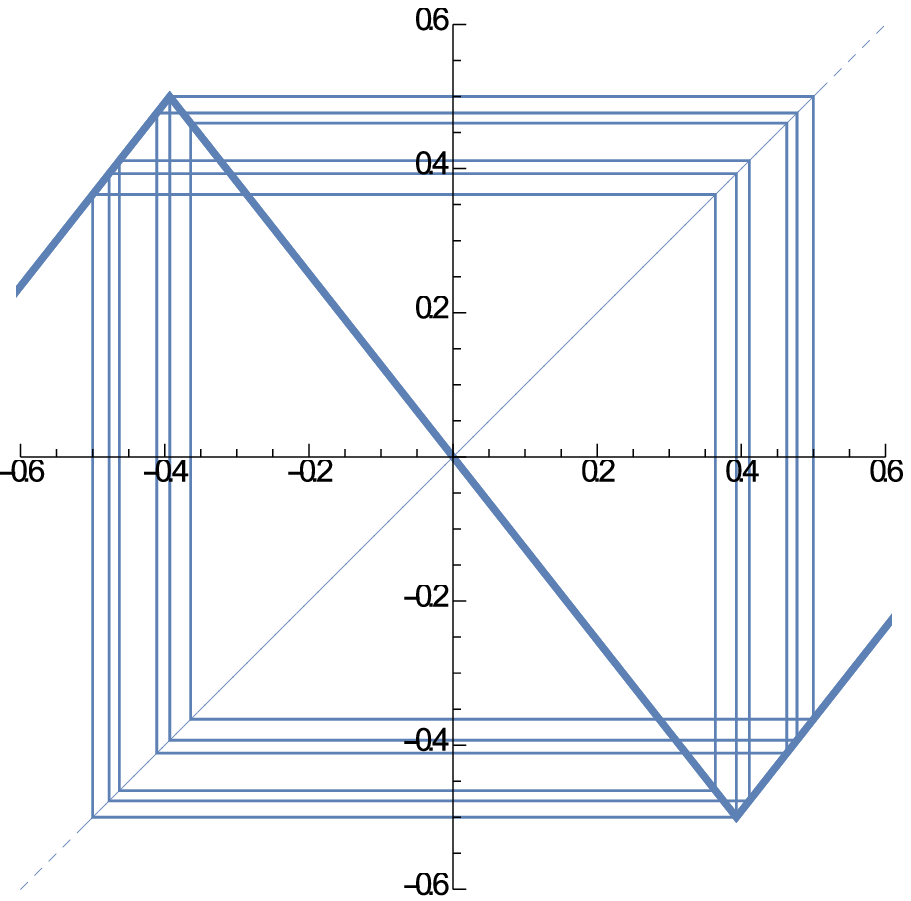}
    &
    \includegraphics[scale=0.8]{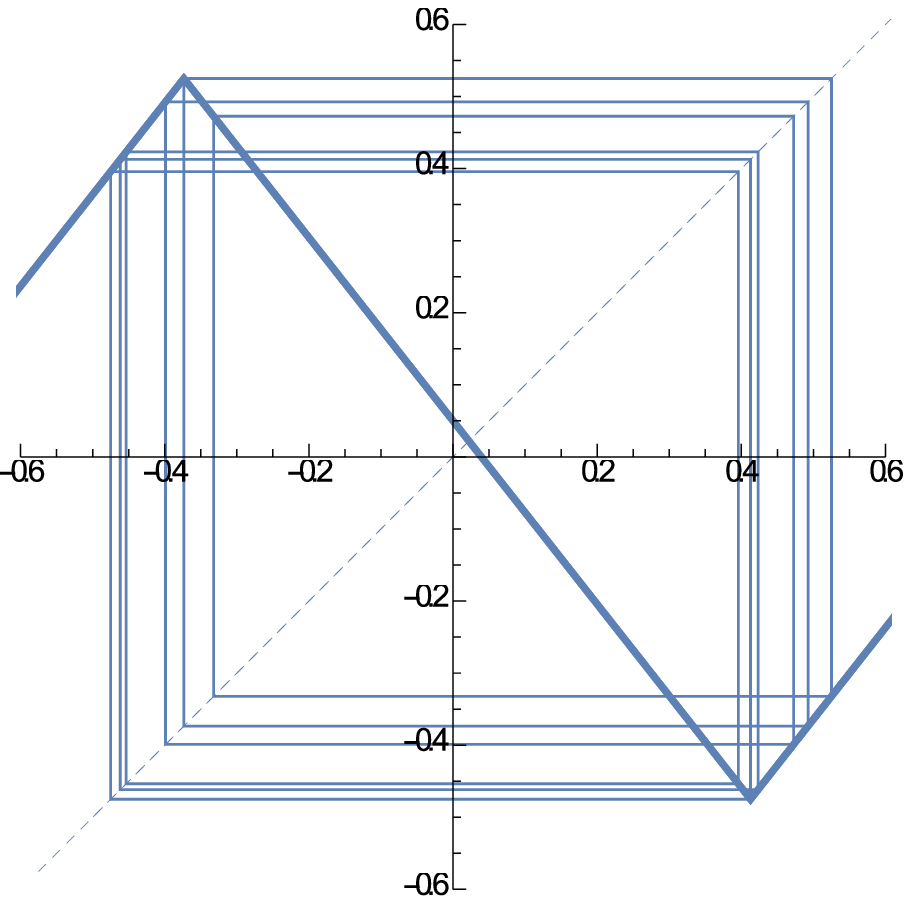}
  \end{tabular}
  \caption{Iterates of the turning points when $\lambda=\lambda_e$,
  $b=0$ (left) and $b=0.05$ (right) }
  \label{fig:2}
\end{figure}

The method of finding exceptional itineraries can obviously applied
not only to the itinerary $\It=\{c^1, J^2, c^1, \ldots\}$, but to
other periodic itineraries too. We have to do the following. Take some
periodic itinerary $\It=\{c^1, J^{i_1},\ldots, J^{i_m}, c^1,
\ldots\}$ and check that this itinerary can be realised by some maps
in $\Lc_\Ic$. Form another compatible itinerary $\It'=\{c^1,
J^{i_1},\ldots, J^{i_m}, J^{j_1},J^{i_1},\ldots,
J^{i_m},J^{j_2},\ldots,J^{i_1},\ldots, J^{i_m},c^1,\ldots\}$, where
all $j_1, j_2, \ldots$ are either $0$ or $1$. As
before, the bifurcation polynomials of $\It'$ have a common factor. If
this factor has real roots in the interval $(1,3)$, investigate if for
values of these roots the corresponding maps can realise $\It'$. If
they can, we have another exceptional itinerary.

In this way we can obtain many exceptional itineraries starting with
$\It=\{c^1,J^1,c^1,\ldots\}$. Another example would be
$\It=\{c^1,J^1,J^0,J^2,c^1,\ldots\}$. This itinerary is not
exceptional and its bifurcation equation is
$$
(\lambda - 1) \left(\lambda ^3+\lambda ^2+\lambda -1\right)b =(\lambda
- 1)\left(-\lambda ^3+\lambda ^2-\lambda -1\right).
$$
The compatible itinerary
$\It'=\{c^1,J^1,J^0,J^2,J^1,J^1,J^0,J^2,J^0,J^1,J^0,J^2,c^1,\ldots\}$
is exceptional, its bifurcation equation factor is
$$
F(\lambda)=\lambda ^8-\lambda ^4-1
$$
which has a root $\approx 1.12784$. For this value of $\lambda$ and
for $-0.808065\leq b\leq -0.720696$ the itinerary of $c^1$ under the
map $q_{\lambda,b}$ is $\It'$.

\subsection{Non-rigidity in the bimodal family $\Lc_\Ic$.}
\label{sec:non-rigidity-bimodal}

We know the following fundamental rigidity result for polynomials with
all critical points real: if two such polynomials are combinatorially
equivalent and do not have periodic attractors, then they are linearly
conjugate. In other words, if we consider a normalised
parameterisation of the polynomial family, there exists only one
parameter with this prescribed combinatorics. For the piece-wise
linear maps of constant slope a similar rigidity result holds provided
the maps are transitive, see \cite{Alseda2015}. If the transitivity
condition does not hold, the rigidity does not necessarily hold
either:
\begin{mytheorem}\label{th:nonrigit}
  There exists a nonempty open set $\Ec \subset \Lc_\Ic$ such that if
  the set $\Ec(=\htop)$ is not empty, then all the maps in
  $\Ec(=\htop)$ are combinatorially equivalent (and therefore
  topologically conjugate).
\end{mytheorem}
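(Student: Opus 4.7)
The plan is to leverage the cascade of exceptional itineraries constructed in Section~\ref{sec:casc-except-itin}. Recall that at $\lambda_e=\sqrt{(\sqrt{5}+1)/2}$ and for every $b\in(-0.119726,\,0.119726)$, both turning points of the bimodal map $q_{\lambda_e,b}$ are periodic of period $6$ with fixed itineraries $\It'$ and $\It''$, and consequently all maps on this one-dimensional interval inside $\Lc_\Ic(=\log\lambda_e)$ are topologically conjugate to each other.

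Starting from a point $q_0=q_{\lambda_e,b_0}$ well inside the exceptional interval, I would take $\Ec$ to be an open rectangular neighbourhood of the form $\{q_{\lambda,b}:|\lambda-\lambda_e|<\delta,\ |b-b_0|<\eta\}$, with $\eta$ small enough that the slice $\Ec(=\log\lambda_e)$ sits inside the exceptional interval. The cascade argument then immediately yields that all maps in $\Ec(=\log\lambda_e)$ are combinatorially equivalent, handling that one isentrope for free. The nontrivial part is to verify the same conclusion for every other $h$ with $\Ec(=h)\ne\emptyset$.

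For this I would use that the combinatorial type of a bimodal map is locally constant on the complement of the bifurcation locus $\Lc_\Ic^\partial$ of maps having an eventually periodic turning point or a turning point mapped onto another turning point. In a small neighbourhood of $q_0$, each bifurcation curve attached to an itinerary $\It$ compatible with $\It'$ or $\It''$ is parameterised by a rational function $b=Q^{\It}_0(e^h)/Q^{\It}_1(e^h)$ of the type produced in Lemma~\ref{lm:r}. Since for the exceptional itinerary itself both the numerator and denominator vanish at $\lambda_e$, the bifurcation curves for the nearby compatible itineraries accumulate onto the exceptional isentrope as $h\to\log\lambda_e$, and their limiting $b$-values on that isentrope are determined (by L'H\^opital) by ratios of derivatives of the corresponding bifurcation polynomials. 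If $b_0$ is chosen outside this countable set of accumulation values and $\delta$ is then shrunk sufficiently, one arranges that no bifurcation curve enters $\Ec$ away from the exceptional isentrope. Every slice $\Ec(=h)$ with $h\ne\log\lambda_e$ therefore sits inside a single connected component of $\Lc_\Ic\setminus\Lc_\Ic^\partial$, giving it a single combinatorial class and the desired topological conjugacy.

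The hard part is controlling all the bifurcation curves simultaneously: there is one family per periodic itinerary compatible with $\It'$ or $\It''$, so countably many, and one needs uniform estimates on how fast each curve approaches its limiting $b$-value in order to pick $\delta$ that works for all of them at once. This is really a Baire-type argument combined with quantitative control on the rational parameterisations from Lemma~\ref{lm:r}, exploiting the explicit structure of the bifurcation polynomials $Q^{\It}_i$ developed in Section~\ref{sec:tq}.
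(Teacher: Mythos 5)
Your handling of the single slice at $h=\log\lambda_e$ is fine, but the step you yourself flag as "the hard part" is not just hard — it is impossible in the form you propose, and this is a genuine gap. Near any point $(\lambda_e,b_0)$ of the exceptional interval the maps $q_{\lambda,b}$ are renormalizable: the intervals $R^i_{\lambda,b}=[q^4_{\lambda,b}(c^i),q^2_{\lambda,b}(c^i)]$ are period-two renormalization intervals containing the turning points, and the return maps $q^2_{\lambda,b}|_{R^i_{\lambda,b}}$ are unimodal tent maps of slope $\pm\lambda^2$. Since parameters with (eventually) periodic turning point are dense in the tent family, for a \emph{dense} set of $\lambda$ near $\lambda_e$ the turning points of $q_{\lambda,b}$ are eventually periodic for \emph{every} $b$ in the rectangle. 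Hence the bifurcation locus meets any rectangle $\{|\lambda-\lambda_e|<\delta,\ |b-b_0|<\eta\}$ in a union of vertical segments dense in the $\lambda$-direction: no choice of $b_0$ and no shrinking of $\delta$ can keep it out of $\Ec$ away from the exceptional isentrope. Moreover, in this region the relevant itineraries are themselves exceptional (the bifurcation equations are independent of $b$), so the bifurcation sets are \emph{not} graphs of rational functions $b=Q^{\It}_0(e^h)/Q^{\It}_1(e^h)$ as in Lemma~\ref{lm:r}; the whole picture of countably many ordinary curves accumulating only on the level $\log\lambda_e$, to be avoided by a Baire/L'H\^opital argument, is incorrect here. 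Finally, even granting your avoidance step for the ordinary curves, the slices $\Ec(=h)$ with $h$ in the dense set above lie \emph{inside} the bifurcation locus, not in a component of its complement, so your local-constancy-off-the-bifurcation-locus argument says nothing about them — and these are exactly the slices that matter.

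The repair is to use the renormalization structure positively, which is what the paper does: once one checks that $R^1=[q^4(c^1),q^2(c^1)]$ and $R^2=[q^2(c^2),q^4(c^2)]$ are period-two renormalization intervals for $q_{\lambda_e,0}$ and that this persists for $(\lambda,b)$ in an open neighbourhood, the combinatorics of the return tent maps $q^2_{\lambda,b}|_{R^i_{\lambda,b}}$ is completely determined by the slope, i.e.\ by $\lambda$ alone. Therefore, for every fixed $\lambda$ in the neighbourhood — bifurcation values of $\lambda$ included — all maps $q_{\lambda,b}$ with $b$ in the neighbourhood have the same kneading data and are combinatorially equivalent. No transversality, genericity of $b_0$, or control of bifurcation curves is needed; the constancy in the $b$-direction holds on every entropy slice at once.
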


\begin{proof}
  Let us look at the example described in the previous section, and
  consider map $q=q_{\lambda_e, 0}$. We know that in this case both
  the turning points are periodic of period $6$, see
  Figure~\ref{fig:2}. Consider the interval $R^1$ defined as
  $[q^4(c^1),q^2(c^1)]$ and notice that $c^1 \in R^1$. It is easy to
  see that $q^2(R^1) \subset R^1$, so $R^1$ is a renormalization
  interval of period two. The interval $R^2=[q^2(c^2),q^4(c^2)]$ is
  another renormalization interval around the turning point $c^2$.

  For parameters $\lambda$ and $b$ close enough to $\lambda_e$ and $0$
  the intervals
  $R^i_{\lambda,b}=[q^4_{\lambda,b}(c^i),q^2_{\lambda,b}(c^i)]$,
  $i=1,2$, will still be renormalization intervals of period two.
  The maps $q^2_{\lambda,b}|_{R^i_{\lambda,b}}$ are unimodal tent maps
  and their combinatorics is completely determent by the parameter
  $\lambda$. Thus for fixed $\lambda$ close to $\lambda_e$ and all $b$
  close to zero all the maps $q_{\lambda,b}$ have the same combinatorics.
\end{proof}

\subsection{Non-existence of exceptional isentropes for large
  entropies.}
\label{sec:non-exist-except}

All examples of isentropes with exceptional turning points we had so
far have been given for the parameter $\lambda$ smaller than two. We
will prove that this is always the case in the bimodal case:

\begin{mytheorem}\label{th:no_exceptional}
  There do not exist isentropes with exceptional turning points in the
  space of bimodal maps $\Lc_{1,id, 2, s}$ of topological entropies
  larger than $\log 2$.

  In general, there do not exist exceptional isentropes of topological entropies
  larger than $\log 3$.
\end{mytheorem}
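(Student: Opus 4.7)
The plan is to view each bifurcation polynomial $2\lambda Q^{\It^{i_0}}_i(\lambda)$ as an integer polynomial and apply Cauchy's bound on the location of its real roots. Iterating the piecewise affine branches one obtains
\[ q^n(c^{i_0}) = \Big(\prod_{k=0}^{n-1}\sigma_k\Big)\lambda^n c^{i_0} + \sum_{k=0}^{n-1}\epsilon_k\lambda^{n-1-k}b^{m_k}; \]
expressing $c^{i_0}$ and $c^{i_1}$ as linear combinations of two adjacent parameters $b^j$ each and multiplying through by $2\lambda$, one finds that $2\lambda Q^{\It^{i_0}}_i(\lambda)$ is an integer polynomial with the following structure: the leading $\pm\lambda^n$ appears only in $Q^{\It^{i_0}}_{i_0-1}$ and $Q^{\It^{i_0}}_{i_0}$ (with opposite signs, coming from the factor $b^{i_0-1}-b^{i_0}$ in $c^{i_0}$); the constant $\pm 1$ appears only in $Q^{\It^{i_0}}_{i_1-1}$ and $Q^{\It^{i_0}}_{i_1}$; and every coefficient of $\lambda^{n-k}$ for $1\le k\le n-1$ lies in $\{-2,0,2\}$.

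For the general threshold $h>\log 3$, consider $P:=2\lambda Q^{\It^{i_0}}_{i_0}(\lambda)$: by the structure above it is a monic integer polynomial of degree $n$ (the $\pm\lambda^n$ coefficient is always present) whose non-leading coefficients are bounded by $2$ in absolute value. Cauchy's bound then gives every real root $|\lambda|\le 1+2=3$. Since exceptionality forces $P(\lambda)=0$, this contradicts $\lambda>3$.

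For the sharper bimodal threshold $h>\log 2$, only $P_0,P_1$ are present and both must vanish. An explicit computation for $c^{i_0}=c^1$ yields $P_1=(-1)^r\lambda^n+2A_1-1$ and $P_0=-(-1)^r\lambda^n-2A_0+2A_2+(-1)^{i_1}$, where $A_j=\sum_{k\ge 1,\,m_k=j}\epsilon_k\lambda^{n-k}$; the leading terms have opposite signs and cancel when added:
\[ P_0+P_1 = 2(A_1-A_0+A_2)+\bigl((-1)^{i_1}-1\bigr). \]
The constant $(-1)^{i_1}-1\in\{-2,0\}$ is always \emph{even}, so dividing by $2$ yields a monic polynomial of degree $n-1$ with coefficients in $\{-1,0,+1\}$. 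Cauchy's bound now gives $|\lambda|\le 2$, contradicting $\lambda>2$. The case $c^{i_0}=c^2$ is symmetric, using the combination $P_0-P_1$, which again cancels the $\pm\lambda^n$ terms and leaves an even constant.

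The main obstacle is ensuring this parity of the constant term after cancelling the $\lambda^n$-leading terms: it is what improves Cauchy's bound from $1+2=3$ to $1+1=2$. In the bimodal case the signs conspire so that the boundary contributions $b^0=1$, $b^l=-1$ and the $\pm 1$ from $c^{i_1}$ pair up to an even total. In the general $l$-modal setting this pairing can fail (for instance when $|i_0-i_1|=1$ with neither turning point extremal), which is why the threshold degrades to $\log 3$.
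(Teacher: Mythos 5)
Your proof is correct in substance, but it reaches the two thresholds by a genuinely different mechanism than the paper. The paper never passes to root bounds for integer polynomials: it writes $q^k(c^{i_0})=\sum_i w^i_k b^i$ and runs the recursion $w^i_{k+1}=\pm\lambda w^i_k+\delta^i_{i_k}$ directly, observing that for $\lambda>3$ the region $|w|>\tfrac12$ is forward invariant while exceptionality forces the coefficient of a free $b^{\hat i}$ to come back to the value $0$ or $\tfrac12$ at the closing time; the sharper bimodal threshold $\log 2$ comes from the finer invariant region $(-\infty,0)\cup(\tfrac12,\infty)$, exploiting that in the bimodal recursion the additive term occurs only together with the sign flip. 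You instead make the integer coefficient structure of $2\lambda Q^{\It}_i$ explicit (leading $\pm1$, intermediate coefficients in $\{-2,0,2\}$, constant in $\{-1,0,1\}$ --- exactly the structure the paper records separately in Section~\ref{sec:ordinary-isentropes}) and invoke Cauchy's bound, and your bimodal improvement comes from the algebraic cancellation in $P_0\pm P_1$ together with the parity of the constant term, rather than from sign dynamics of the recursion. The two arguments have the same strength here (Cauchy's bound is proved by the very domination the paper iterates), but yours displays the number-theoretic nature of the obstruction more transparently, in the spirit of equality~(\ref{eq:11}), while the paper's avoids any bookkeeping about which $Q_i$ carries which extreme coefficient. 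Two small points to patch: when $i_0=l$ the polynomial $Q^{\It}_{i_0}$ is not among the bifurcation polynomials, so you must take $P=2\lambda Q^{\It}_{i_0-1}$ (one of the indices $i_0-1,\,i_0$ always lies in $\{1,\dots,l-1\}$ and carries the $\pm\lambda^n$ term); and in the bimodal case with $n=1$ and $i_1\neq i_0$ the combination $P_0+P_1$ vanishes identically, but then $P_1=\pm\lambda-1$, so exceptionality would force $\lambda=1$ and the conclusion is unaffected.
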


\begin{proof}
  We start the proof with the general case when $l$ is not necessarily
  two. We will be using the notation introduced in
  Section~\ref{sec:linear-model}, i.e. on the interval $J^i$ the map
  $q$ is defined as $q(x)= (-1)^i s \lambda x + b^i$. Recall that
  $b^0$ and $b^l$ are fixed by the boundary conditions.
  
  Let $\It=\{c^{i_0},J^{i_1}, \ldots, J^{i_{m-1}},c^{i_m},\ldots\}$ be
  an exceptional itinerary, i.e. there exist parameters $\lambda_0$
  and $b_0$ such that the itinerary of $c^{i_0}_{\lambda_0,b_0}$ under
  the map $q_{\lambda_0,b_0}$ is $\It$ and all the bifurcation
  polynomials of $\It$ vanish for $\lambda=\lambda_0$. It is also
  clear that if the parameter $b$ is close enough to $b_0$, then the
  itinerary of $c^{i_0}_{\lambda_0,b}$ will be $\It$ again. 

  Now consider the iterates of $c^{i_0}$ under the map $q$. From the
  definition it is easy to see that
  $$
  q_{\lambda_0,b}^k(c^{i_0}) = \sum_{i=0}^l w^i_k b^i
  $$
  where $w^i_k$ are some numbers (which in general depend on
  $\lambda_0$). These numbers are related by
  recursive formulas of the form
  $$
  w^i_{k+1} = (-1)^{i_k} s \lambda_0 w^i_k + \delta_{i_k}^i
  $$
  where $\delta_{i_k}^i$ is equal to one if $i=i_k$ and zero
  otherwise. The initial conditions for these recursive formulas are
  $$
  w^i_1 = \frac 12 \left(\delta^{i-1}_{i_0} +\delta^i_{i_0}\right)
  $$
  because $q(c^{i_0}) = \frac 12 (b^{i-1}+b^i)$.

  Fix some $\hat i \neq 0,l$ such that $w^{\hat i}_1=\frac 12$. Notice
  that if $|x|\ge \frac 12$, then $|\pm \lambda x| > \frac 32$ and
  $|\pm \lambda x + 1| > \frac 12$ for all $\lambda > 3$. This implies
  that
  \begin{equation}
    |w^{\hat i}_k| > \frac 12 \label{eq:7}
  \end{equation}
  for all $k \ge 2$.

  We know that $q^{m+1}(c^{i_0}) = q (c^{i_m})$. Since the level
  $\lambda_0$ is exceptional we also know that the bifurcation
  polynomial $Q^{\It}_{\hat i}(\lambda_0)=0$. This implies that
  $w^i_{m+1}$ is either $\frac 12$ or zero. This contradicts
  inequality~(\ref{eq:7}). Thus $\lambda_0$ cannot be larger than
  three.

  The case of bimodal maps where $l=2$ is dealt with similarly. We
  will consider the case $s=+1$, the other case  $s=-1$ is analogous.
  The index $\hat i$ here is just $1$, $w^1_1=\frac 12$ and the
  recursive formula is
  $$
  w^1_{k+1}=\left\{
    \begin{array}{ll}
    \phantom{-}\lambda_0 w^1_k &\mbox{ if  $i_k$ is $0$ or $2$}\\ 
    -\lambda_0 w^1_k + 1 &\mbox{ if  $i_k$ is $1$}
    \end{array}
  \right.
  $$
  This formula implies that if $\lambda_0>2$ and $w^1_k \in (-\infty,0)\cup[\frac 12,
  +\infty)$, then $w^1_{k+1} \in (-\infty,0)\cup(\frac 12,
  +\infty)$. So, arguing as in the general case we get a contradiction.
\end{proof}

\section{Codimension one hyperbolic maps in the bimodal family}
\label{sec:ordinary-isentropes}

Let us consider the family of real polynomial maps of degree 3 and
their isentropes of entropy larger than $\log 2$. From the previous
section we already know that there are no exceptional critical points
in this case, so Theorem~\ref{th:D} implies that if a map has
entropy larger than $\log 2$, two critical points, one of which is
not controlled and the other is controlled (so it is periodic), then
such a map cannot be approximated by hyperbolic maps of the same
entropy.

We conjecture that these codimension one hyperbolic maps exist on
every isentrope (with some trivial exceptions like $h=\log 3$ for the
bimodal maps). Let us see what would happen if this is not the case.

Fix some entropy level $h > \log 2$ and the corresponding
isentrope in the space of the bimodal piece-wise linear maps of the
constant slopes given by formula~(\ref{eq:8}). Suppose that $c^1$ is a
periodic point of period $n$. Then equation~(\ref{eq:6}) can be
written as
\begin{equation}
  \label{eq:9}
   Q^1_1(\lambda) b  =Q^1_0(\lambda),
\end{equation}
where $Q^1_0(\lambda) = \sum_{i=0}^n \alpha^1_i \lambda^i$ and
$Q^1_1(\lambda) = \sum_{i=0}^n \beta^1_i \lambda^i$ and the coefficients
$\alpha^1_i$ and $\beta^1_i$ can be explicitly computed if the itinerary
of $c^1$ is known. Moreover, these coefficients satisfy the following
conditions which are easy to obtain by a direct computation:
$-\alpha^1_n=\beta^1_n=1$, $\alpha^1_0=\beta^1_0 = \pm 1$, and for
$i=1,\ldots, n-1$ we have $\alpha^1_i$, $\beta^1_i \in \{-2,0,2\}$ and
$|\alpha^1_i|+|\beta^1_i|=2$.  The last condition means that if a
coefficient in front of $\lambda^i$ is non-zero in the polynomial
$Q^1_0$, then the corresponding coefficient in $Q^1_1$ must be zero and
vise verse (however they cannot be both zeros at the same time).

For example, let us consider the case when $\lambda$ is close to
$3$. Then the critical value $q(c^1)$ is close to the fixed repelling
point $a$ and there exists a bimodal map $q$ such that $c^1$ is
periodic of period $n$ with the itinerary
$\{c^1, \underbrace{J^2,J^2,\ldots,J^2}_{n-1},c^1,\ldots\}$ (recall
that $J^2=(c^2,a)$). Formula~(\ref{eq:9}) in this case becomes
$$
(\lambda^n-1) b = -\lambda^n + 2 \lambda^{n-1} +\cdots + 2\lambda -1.
$$

Similarly, if the critical point $c^2$ is periodic or is mapped onto
$c^1$ by some iterate of the map, the parameters $\lambda$ and $b$
satisfy 
\begin{equation}
  \label{eq:10}
   Q^2_1(\lambda) b  =Q^2_0(\lambda),
\end{equation}
where $Q^2_0(\lambda) = \sum_{i=0}^{n'} \alpha^2_i \lambda^i$ and
$Q^2_1(\lambda) = \sum_{i=0}^{n'} \beta^2_i \lambda^i$ and for the
coefficients the following holds: $\alpha^2_{n'}=\beta^2_{n'}=1$,
$\alpha^2_0=\pm 1$, $\beta^2_0 = \pm 1$, and for
$i=1,\ldots, {n'}-1$ we have $\alpha^2_i$, $\beta^2_i \in \{-2,0,2\}$ and
$|\alpha^2_i|+|\beta^2_i|=2$.

Suppose that all maps in some isentrope $\bPc_\Ic(=h)$, where
$h \in (\log 2, \log 3)$, can be approximated by hyperbolic maps. Fix
the corresponding value of $\lambda = e^h$ and consider the
corresponding isentrope $\bLc_\Ic(=h)$. Theorem~\ref{th:D} and
Theorem~\ref{th:no_exceptional} imply that $\bLc_\Ic(=h)$ does not
contain any codimension one hyperbolic maps. Then for any parameter
$b$ such that the turning point $c^1$ of
$q_{\lambda,b}\in \bLc_\Ic(h)$ is periodic (and, therefore,
equality~(\ref{eq:9}) holds) the other critical point $c^2$ must be
controlled as well and equality~(\ref{eq:10}) must holds.  This
implies that $\lambda$ satisfies the equality
\begin{equation}
  \label{eq:11}
  Q^1_0(\lambda)Q^2_1(\lambda) = Q^1_1(\lambda) Q^2_0(\lambda)
\end{equation}
The parameter $\lambda$ must satisfy an equality of this type whenever
one of the turning points of $q_{\lambda,b}$ is periodic. For this
fixed $\lambda$ there are
infinitely many different values of $b$ when this map has a periodic
critical point, therefore $\lambda$ has to satisfy infinitely many
different polynomial equalities of type~(\ref{eq:11}). Notice that the
involved polynomials $Q^i_j$ are very special (we described properties
of their coefficients in the paragraphs above). It seems highly likely
that such parameters $\lambda$ do not exist, but we were unable to
prove this.

\section{Appendix}
\label{sec:topological-fact}

Here we will prove the topological fact we have been often using.

\begin{lemma} \label{lm:top}
  Let $X$, $Y$ be topological spaces, and
  $X$ be compact.  Let $F: X\to Y$ be continuous.  Let $B$ be a subset
  of $Y$, $A = F^{-1}(B)$, the set $F(A)$ be connected, and $F|_A$ be
  monotone. Then $A$ is connected.
\end{lemma}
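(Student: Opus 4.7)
The plan is to argue by contradiction. Suppose $A$ is disconnected; then one can pick open sets $U_1, U_2 \subset X$ for which $A_i := A \cap U_i$ are nonempty, disjoint, and cover $A$. My goal is to show that $F(A) = F(A_1) \sqcup F(A_2)$ with both pieces relatively closed in $F(A)$, which contradicts the assumed connectedness of $F(A)$.

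First I would verify that $F(A_1) \cap F(A_2) = \emptyset$. If some $y$ lay in the intersection, the fiber $F^{-1}(y) \cap A$ would meet each $A_i$; since the $A_i$ are disjoint and relatively open in $A$, this fiber would split as a disjoint union of two nonempty relatively open subsets, violating its connectedness. But connectedness of every such fiber is exactly the monotonicity of $F|_A$, so we get a contradiction.

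Next I would show that $F(A_1)$ is relatively closed in $F(A)$; this is where the compactness of $X$ enters. Setting $C_2 := X \setminus U_2$, which is closed in $X$ and hence compact, we have $A_1 = A \cap C_2$ (as $A_1$ is disjoint from $U_2$). The key identity is $F(A_1) = F(A) \cap F(C_2)$: one inclusion is trivial, and for the other, if $y = F(x)$ with $x \in C_2$ and $y \in F(A) \subset B$, then $x \in F^{-1}(B) \cap C_2 = A_1$. Since $F(C_2)$ is the continuous image of a compact set it is compact, and hence closed in $Y$, which makes $F(A_1)$ relatively closed in $F(A)$. The symmetric argument handles $F(A_2)$, giving the promised disconnection of $F(A)$.

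The only subtle point is passing from compactness of $F(C_2)$ to closedness, which requires $Y$ to be Hausdorff. This is harmless in the applications, since $Y$ in this paper is always Hausdorff (either $\R$, a $C^b$ function space, or the finite-dimensional parameter space of piece-wise linear models); so I would either state the lemma with an implicit Hausdorff assumption on $Y$ or verify closedness of $F(C_2)$ by hand in each application.
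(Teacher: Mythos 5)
Your argument is correct, but it takes a genuinely different route from the paper's. The paper also starts from a separation $A=A_1\cup A_2$, but then works at a single point rather than with the whole image decomposition: since $F(A)$ is connected, some $y_0\in F(A_1)\cap\overline{F(A_2)}$ exists; compactness of $X$ is used to lift a sequence $y_i\in F(A_2)$, $y_i\to y_0$, to a limit point $x_\infty\in\bar A_2$ with $F(x_\infty)=y_0$; and since the fiber $F^{-1}(y_0)$ is connected and contains a point of $A_1$, it lies entirely in $A_1$, contradicting $A_1\cap\bar A_2=\emptyset$. You instead turn the pushed-forward decomposition itself into a separation of $F(A)$: disjointness of $F(A_1)$ and $F(A_2)$ comes from fiber connectedness exactly as in the paper, while relative closedness comes from the identity $F(A_i)=F(A)\cap F(X\setminus U_j)$ together with compactness of $X\setminus U_j$. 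The trade-off you flag is real: passing from ``compact'' to ``closed'' needs $Y$ Hausdorff (or at least that compact sets are closed). Note, however, that the paper's proof is not actually cleaner on this point: it extracts convergent subsequences from bare compactness and concludes $F(x_\infty)=y_0$ by comparing two limits of the same sequence, which tacitly uses first countability/sequential compactness and uniqueness of limits --- hypotheses of the same nature as yours. In the applications $Y$ is either a subset of $\R^D$, the real line, or one of the quotient spaces $\btLc_\Ic$, $\btPc_{\Ic_q}$; for the quotients the separation property is precisely what would need checking, and that check is required equally by both arguments. So your more ``global'' closed-map style proof is a legitimate alternative, and making the implicit separation axiom explicit is a virtue rather than a gap.
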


\begin{proof}
  Suppose that $A$ is not connected, so there exists a separation of
  $A$. This means that there are two non-empty subsets $A_1$ and $A_2$
  such that $A = A_1 \cup A_2$, $\bar A_1 \cap A_2 = \emptyset =  A_1
  \cap \bar A_2$. Set $B_k=F(A_k)$ where $k=1,2$.

  The sets $B_1$, $B_2$ are non-empty because $A_1$ and $A_2$ are non
  empty. Since $F(A)$ is connected, $B_1$ and $B_2$ cannot form a
  separation of $F(A)$, thus the closure of one of them should have
  non-empty intersection with the other. Assume
  $B_1\cap \bar B_2 \neq \emptyset$ and let
  $y_0 \in B_1 \cap \bar B_2$. Since $y_0 \in B_1$ there exists
  $x_0 \in A_1$ such that $F(x_0)=y_0$. Also, take a sequence of
  $y_i \in B_2$ converging to $y_0$ and let $x_i\in A_2$ be such that
  $F(x_i)=y_i$. The space $X$ is compact, so we can take a subsequence
  $x_{i_j}$ converging to some $x_\infty \in \bar A_2$.  From the
  continuity of $F$ it follows that $F(x_\infty)=y_0$.

  The map $F|_A$ is monotone, therefore $F^{-1}(y_0)$ is connected.
  We know that $x_0 \in A_1$, hence $F^{-1}(y_0) \subset A_1$.  On the
  other hand $x_\infty$ belongs to both $\bar A_2$ and $F^{-1}(y_0)$,
  so the intersection of $A_1$ and $\bar A_2$ are non-empty. This is a
  contradiction.
\end{proof}

\bibliographystyle{alpha}
\bibliography{../tesis.bib,../mendeley/library}

\begin{thebibliography}{KSvS07b}

\bibitem[AM15]{Alseda2015}
Llu{\'{i}}s Alsed{\`{a}} and Micha{\l} Misiurewicz.
\newblock {Semiconjugacy to a map of a constant slope}.
\newblock {\em Discrete and Continuous Dynamical Systems - Series B},
  20(10):3403--3413, sep 2015.

\bibitem[BvS15]{BvS}
Henk Bruin and Sebastian van Strien.
\newblock Monotonicity of entropy for real multimodal maps.
\newblock {\em J. Amer. Math. Soc.}, 28(1):1--61, 2015.

\bibitem[CST17]{Clark2017}
Trevor Clark, Sebastian~Van Strien, and Sofia Trejo.
\newblock {Complex Bounds for Real Maps}.
\newblock {\em Communications in Mathematical Physics}, 355(3):1001--1119,
  2017.

\bibitem[DH84]{D-H-84}
Adrien Douady and John Hubbard.
\newblock {\em \'{E}tude dynamique des polyn\^omes complexes. {P}artie {I}},
  volume~84 of {\em Publications Math\'ematiques d'Orsay [Mathematical
  Publications of Orsay]}.
\newblock Universit\'e de Paris-Sud, D\'epartement de Math\'ematiques, Orsay,
  1984.

\bibitem[dMS93]{Melo1995}
Welington de~Melo and Sebastian~Van Strien.
\newblock {\em {One-dimensional Dynamics}}.
\newblock Springer-Verlag, Berlin, 1993.

\bibitem[Dou95]{D-95}
Adrien Douady.
\newblock Topological entropy of unimodal maps: monotonicity for quadratic
  polynomials.
\newblock In {\em Real and complex dynamical systems ({H}iller\o d, 1993)},
  volume 464 of {\em NATO Adv. Sci. Inst. Ser. C Math. Phys. Sci.}, pages
  65--87. Kluwer Acad. Publ., Dordrecht, 1995.

\bibitem[KSvS07a]{KSS2}
Oleg Kozlovski, Weixiao Shen, and Sebastian van Strien.
\newblock Density of hyperbolicity in dimension one.
\newblock {\em Ann. of Math. (2)}, 166(1):145--182, 2007.

\bibitem[KSvS07b]{KSS1}
Oleg Kozlovski, Weixiao Shen, and Sebastian van Strien.
\newblock Rigidity for real polynomials.
\newblock {\em Ann. of Math. (2)}, 165(3):749--841, 2007.

\bibitem[MT88]{M-Th-88}
John Milnor and William Thurston.
\newblock On iterated maps of the interval.
\newblock In {\em Dynamical systems ({C}ollege {P}ark, {MD}, 1986--87)}, volume
  1342 of {\em Lecture Notes in Math.}, pages 465--563. Springer, Berlin, 1988.

\bibitem[MT00]{Mil-Tresser-00}
John Milnor and Charles Tresser.
\newblock On entropy and monotonicity for real cubic maps.
\newblock {\em Comm. Math. Phys.}, 209(1):123--178, 2000.
\newblock With an appendix by Adrien Douady and Pierrette Sentenac.

\bibitem[Par66]{parry-66}
William Parry.
\newblock Symbolic dynamics and transformations of the unit interval.
\newblock {\em Trans. Amer. Math. Soc.}, 122:368--378, 1966.

\bibitem[Tsu00]{Tsujii00}
Masato Tsujii.
\newblock A simple proof for monotonicity of entropy in the quadratic family.
\newblock {\em Ergodic Theory Dynam. Systems}, 20(3):925--933, 2000.

\bibitem[vS14]{vS-14}
Sebastian van Strien.
\newblock Milnor's conjecture on monotonicity of topological entropy: results
  and questions.
\newblock In {\em Frontiers in complex dynamics}, volume~51 of {\em Princeton
  Math. Ser.}, pages 323--337. Princeton Univ. Press, Princeton, NJ, 2014.

\end{thebibliography}

\end{document}